%\documentclass[a4paper,12pt]{article}
%\usepackage[utf8]{inputenc}
%\usepackage[mono=false,oldstyle,proportional]{libertine}
%	\makeatletter
%	\renewcommand*\libertine@figurestyle{LF}
%	\makeatother
%\usepackage[T1]{fontenc}
%\usepackage{amsthm}
%\usepackage[cmintegrals,libertine]{newtxmath}
%	\makeatletter
%	\renewcommand*\libertine@figurestyle{OsF}
%	\makeatother
%\usepackage[cal=euler]{mathalfa}
%\usepackage{microtype}
%\usepackage[english]{babel}

\documentclass[a4paper,11pt]{article}
\usepackage[utf8]{inputenc}
\usepackage[T1]{fontenc}
\usepackage{amsthm}
\usepackage{mathtools}
\usepackage{lmodern}
\usepackage{microtype}
\usepackage[english]{babel}

\usepackage[margin=2.5cm]{geometry}
\linespread{1.1}

\usepackage{enumitem}

\usepackage{tikz}
	\tikzstyle{lien}=[->,>=stealth,rounded corners=2pt,dashed]
	\tikzset{individu/.style={draw,#1},individu/.default={}}

\usepackage{xcolor}
	\colorlet{bleu}{blue!30!black}
	\colorlet{rouge}{red!30!black}
	\colorlet{vert}{green!20!black}
	\colorlet{jaune}{yellow!50!black}

\usepackage{marvosym}

\usepackage{hyperref}
\hypersetup{
	colorlinks=true,
		citecolor=bleu,
		linkcolor=rouge,
		urlcolor=vert,
		filecolor=jaune,
	breaklinks=true,
	bookmarksopen=true,
	bookmarksnumbered,
	pdftitle = {Fires on large recursive trees},
	pdfauthor = {Cyril Marzouk}
}

\newcommand{\ensembles}[1]{\mathbf{#1}}
	\newcommand{\N}{\ensembles{N}}
	
	\newcommand{\R}{\ensembles{R}}

\renewcommand{\P}{{\rm P}}
\newcommand{\E}{{\rm E}}

\newcommand{\ex}{\mathrm{e}}
\newcommand{\e}{\epsilon}
\renewcommand{\d}{\mathrm{d}}
\newcommand{\Card}{\mathrm{Card}}
\renewcommand{\t}{\mathsf{T}}
\newcommand{\tn}{\t_n}
\newcommand{\Cut}{\mathsf{Cut}}

\newcommand{\T}{\mathcal{T}}
\newcommand{\Red}{\mathcal{R}}
\newcommand{\Leb}{\mathrm{Leb}}

\newcommand{\cv}{\quad\mathop{\longrightarrow}^{}_{n \to \infty}\quad}
\newcommand{\cvk}{\quad\mathop{\longrightarrow}^{}_{k \to \infty}\quad}
\newcommand{\cvloi}{\quad\mathop{\longrightarrow}^{(d)}_{n \to \infty}\quad}

\theoremstyle{plain}
	\newtheorem{thm}{Theorem}
	\newtheorem{pro}{Proposition}
	\newtheorem{cor}{Corollary}
	\newtheorem{lem}{Lemma}
\theoremstyle{definition}
	\newtheorem{rem}{Remark}

% For "Proof" in bold:
\makeatletter \renewenvironment{proof}[1][\proofname] {\par\pushQED{\qed}\normalfont\topsep6\p@\@plus6\p@\relax\trivlist\item[\hskip\labelsep\bfseries#1\@addpunct{.}]\ignorespaces}{\popQED\endtrivlist\@endpefalse} \makeatother

\newenvironment{proofof}[1]{%
  \begin{proof}[Proof of #1]
}{%
  \end{proof}%
}

\title{Fires on large recursive trees}
\author{Cyril Marzouk
	\thanks{Institut f\"{u}r Mathematik, Universit\"{a}t Z\"{u}rich, Winterthurerstrasse 190, CH-8057 Zürich, Switzerland. Email: \href{mailto:cyril.marzouk@math.uzh.ch}{\texttt{cyril.marzouk@math.uzh.ch}}.}
	}
\date{}
										%%%%%%%%%%%%%%%
										%%%% DOCUMENT %%%%
										%%%%%%%%%%%%%%%
\begin{document}

\maketitle

\begin{abstract}
We consider random dynamics on a uniform random recursive tree with $n$ vertices. Successively, in a uniform random order, each edge is either set on fire with some probability $p_n$ or fireproof with probability $1-p_n$. Fires propagate in the tree and are only stopped by fireproof edges. We first consider the proportion of burnt and fireproof vertices as $n\to\infty$, and prove a phase transition when $p_n$ is of order $\ln n/n$. We then study the connectivity of the fireproof forest, more precisely the existence of a giant component. We finally investigate the sizes of the burnt subtrees.
\end{abstract}

%%%%%%%%%%%%%%%%%%%%%%%%%%%%%%%%%%%%%%%%%%%%%%%%%%%%%
%%%%%%%%%%%%%%%%%%%%%%%%%% SECTION 1 %%%%%%%%%%%%%%%%%%%%
%%%%%%%%%%%%%%%%%%%%%%%%%%%%%%%%%%%%%%%%%%%%%%%%%%%%%
\section{Introduction}\label{section1}

Given a connected graph with $n$ vertices and a number $p_n \in [0,1]$, we consider the following random dynamics: initially every edge is flammable, then successively, in a uniform random order, each edge is either fireproof with probability $1 - p_n$ or set on fire with 
probability $p_n$; in the latter case, the edge burns, sets on fire its flammable neighbors and the fire propagates instantly in the graph, only stopped by fireproof edges. An edge which has burnt because of the propagation of fire is not subject to the dynamics thereafter. The dynamics continue until all edges are either burnt or fireproof. A vertex is called fireproof if all its adjacent edges are fireproof and called burnt otherwise. We discard the fireproof edges with at least one burnt extremity and thus get two families of subgraphs: one consists of fireproof subgraphs and the other of burnt subgraphs; see Figure \ref{fig:arbre_brule} for an illustration. We study the asymptotic behavior of the size of these two families of subgraphs and of their connected components as the size of the graph tends to infinity.

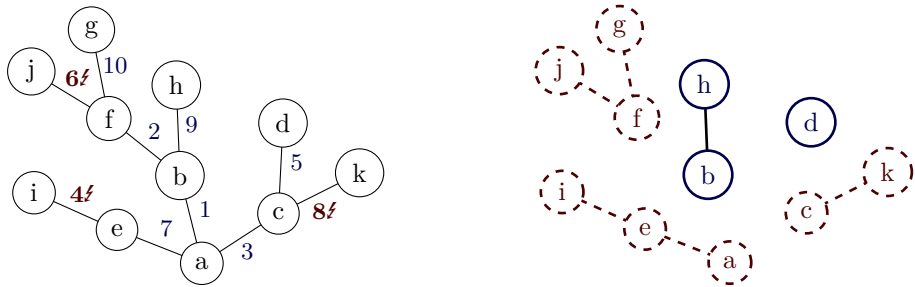
\begin{figure}[ht] \centering
\hfill
\begin{small}
%									%%%%%%%%%%%%%%%%%%%%%%%%%
%									%%%% ARBRE AVANT LA DYNAMIQUE %%%
%									%%%%%%%%%%%%%%%%%%%%%%%%%
\begin{tikzpicture}[scale=1.2]
\draw
	(-0.93,0.37) -- (-1.84,0.78)		% arête 4
	(-1.02,1.6) -- (-1.87,2.12)		% arête 6
	(0.84,0.55) -- (1.73,1)		% arête 8
;
\draw
	(0,0) -- (-0.24,0.97)			% arête 1
	(-0.24,0.97) -- (-1.02,1.6)		% arête 2
	(0,0) -- (0.84,0.55)			% arête 3
	(0.84,0.55) -- (0.89,1.55)		% arête 5
	(0,0) -- (-0.93,0.37)			% arête 7
	(-0.24,0.97) -- (-0.28,1.97)		% arête 9
	(-1.02,1.6) -- (-1.21,2.58)		% arête 10
;
\draw
	(0,0)			node [circle, fill=white, draw]	{a}
	(-0.24,0.97)	node [circle, fill=white, draw]	{b}
	(0.84,0.55)	node [circle, fill=white, draw]	{c}
	(0.89,1.55)	node [circle, fill=white, draw]	{d}
	(-0.93,0.37)	node [circle, fill=white, draw]	{e}
	(-1.02,1.6)		node [circle, fill=white, draw]	{f}
	(-1.21,2.58)	node [circle, fill=white, draw]	{g}
	(-0.28,1.97)	node [circle, fill=white, draw]	{h}
	(-1.84,0.78)	node [circle, fill=white, draw]	{i}
	(-1.87,2.12)	node [circle, fill=white, draw]	{j}
	(1.73,1)		node [circle, fill=white, draw]	{k}
;
\begin{footnotesize}
\color{bleu}{
\draw
	(0.04,0.59)	node		{1}
	(-0.52,1.46)	node		{2}
	(0.5,0.13)		node		{3}
	(1.04,1.11)	node		{5}
	(-0.39,0.38)	node		{7}
	(-0.11,1.54)	node		{9}
	(-0.95,2.19)	node		{10}
;}
{\bf \color{rouge}{
\draw
	(-1.31,0.77)	node		{4\Lightning}
	(-1.36,2.05)	node		{6\Lightning}
	(1.35,0.58)	node		{8\Lightning}
;}}
\end{footnotesize}
\end{tikzpicture}
\hfill
%									%%%%%%%%%%%%%%%%%%%%%%%%%
%									%%%% FORÊT APRÈS LA DYNAMIQUE %%%
%									%%%%%%%%%%%%%%%%%%%%%%%%%
\begin{tikzpicture}[scale=1.2, line width=1pt]
\draw
%\draw[double]
%	(0,0) -- (-0.24,0.97)			% arête 1
%	(-0.24,0.97) -- (-1.02,1.6)		% arête 2
%	(0,0) -- (0.84,0.55)			% arête 3
%	(0.84,0.55) -- (0.89,1.55)		% arête 5
	(-0.24,0.97) -- (-0.28,1.97)		% arête 9
;
\draw [dashed, color=rouge]
%\draw[dashed]
	(-0.93,0.37) -- (-1.84,0.78)		% arête 4
	(0,0) -- (-0.93,0.37)			% arête 7
	(-1.02,1.6) -- (-1.87,2.12)		% arête 6
	(-1.02,1.6) -- (-1.21,2.58)		% arête 10
	(0.84,0.55) -- (1.73,1)		% arête 8
;
\draw
	(0,0)			node [circle, color=rouge, fill=white, draw, dashed]	{a}
	(-0.24,0.97)	node [circle, line width=1pt, color=bleu, fill=white, draw]	{b}
	(0.84,0.55)	node [circle, color=rouge, fill=white, draw, dashed]	{c}
	(0.89,1.55)	node [circle, line width=1pt, color=bleu, fill=white, draw]	{d}
	(-0.93,0.37)	node [circle, color=rouge, fill=white, draw, dashed]	{e}
	(-1.02,1.6)		node [circle, color=rouge, fill=white, draw, dashed]	{f}
	(-1.21,2.58)	node [circle, color=rouge, fill=white, draw, dashed]	{g}
	(-0.28,1.97)	node [circle, line width=1pt, color=bleu, fill=white, draw]	{h}
	(-1.84,0.78)	node [circle, color=rouge, fill=white, draw, dashed]	{i}
	(-1.87,2.12)	node [circle, color=rouge, fill=white, draw, dashed]	{j}
	(1.73,1)		node [circle, color=rouge, fill=white, draw, dashed]	{k}
;
\end{tikzpicture}
\end{small}
\hfill{}
\caption{Given a recursive tree and an enumeration of its edges on the left, if the edges set on fire are the 4th, the 6th and the 8th, we get the two forests on the right where the burnt components are drawn with dotted lines and the fireproof ones with plain lines.}
\label{fig:arbre_brule}
\end{figure}

Fires on a graph find applications in statistical physics and in the study of epidemics propagating in a network. If the graph models a network, then the fires may be thought of as infections, and burnt and fireproof vertices respectively as infected and immune nodes. We stress that in the present model, infected nodes do not recover, we talk about a Susceptible-Infected-Removed epidemic, as opposed to Susceptible-Infected-Susceptible epidemics described by usual fire forest models in which the infected nodes recover and may be infected again later, see e.g. Drossel \& Schwabl \cite{Drossel_Schwabl-Self_organized_critical_forest-fire_model}. Another fire model in which burnt components are removed was studied by R\'{a}th \cite{Rath-Mean_field_frozen_percolation} and R\'{a}th \& T\'{o}th \cite{Rath_Toth-Erdos_Renyi_random_graphs_forest_fires_self_organized_criticality} who considered an Erd\H{o}s--Renyi graph in which edges appear at unit rate and the vertices are set on fire at small rate; when a vertex is set on fire, the whole connected component which contains it burns and is removed from the graph (edges and vertices). This model is in some sense dual to the present one: all edges are present at the beginning but fireproof edges act as barriers that stop the propagation of fires.

The model we investigate was introduced by Bertoin \cite{Bertoin-Fires_on_trees} and further studied in \cite{Marzouk-Fires_large_cayley_trees} in the special case where the graphs are Cayley trees of size $n$, i.e. picked uniformly at random amongst the $n^{n-2}$ different trees on a set of $n$ labelled vertices. In this paper, we first work in the general tree-setting: $(T_n)_{n \ge 1}$ is a sequence of random trees, where $T_n$ has size $n$. Our first result gives a criterion depending on the law of $T_n$ in order to observe a phase transition for the proportion of fireproof vertices, in the sense that for $p_n$ smaller than some specified sequence, the proportion of fireproof vertices in $T_n$ converges to $1$ in probability, for $p_n$ larger, it converges to $0$, and for $p_n$ comparable to this sequence, there is convergence in distribution to a non-trivial limit. Further, in the latter case, under a stronger assumption, we prove the joint convergence of the proportion of fireproof vertices and the sizes of the burnt connected components rescaled by $n$. See respectively Proposition \ref{pro:transition_de_phase_generale} and Proposition \ref{pro:limite_foret_ignifugee_et_arbres_brules}. Both results rely on the cut-tree associated with $T_n$ as defined by Bertoin \cite{Bertoin-Fires_on_trees} and a certain point process defined in \cite{Marzouk-Fires_large_cayley_trees}.

We then investigate more specifically the case of random recursive trees. A tree on the set of vertices $[n] \coloneqq \{1, \dots,  n\}$ is called recursive if, when rooted at $1$, the sequence of vertices along any branch from the root to a leaf is increasing. There are $(n-1)!$ such trees and we pick one of them uniformly at random, that we simply call random recursive tree, and denote by $\tn$. A random recursive tree on $[n]$ can be inductively constructed by the following algorithm: we start with the singleton $\{1\}$, then for every $i = 2, \dots, n$, the vertex $i$ is added to the current tree by an edge $\{u_i, i\}$, where $u_i$ is chosen uniformly at random in $\{1, \dots, i-1\}$ and independently of the previous edges.

We shall see that random recursive trees fulfill the assumptions of Proposition \ref{pro:transition_de_phase_generale} (and Proposition \ref{pro:limite_foret_ignifugee_et_arbres_brules}) previously alluded, which reads as follows: denote by $I_n$ the number of fireproof vertices in $\tn$, then
\begin{enumerate}
\item\label{intro-transition_de_phase_arbres_recursifs_sup} If $\lim_{n \to \infty} n p_n/\ln n = 0$, then $\lim_{n \to \infty} n^{-1} I_n = 1$ in probability (supercritical regime).

\item\label{intro-transition_de_phase_arbres_recursifs_sub} If $\lim_{n \to \infty} n p_n/\ln n = \infty$, then $\lim_{n \to \infty} n^{-1} I_n = 0$ in probability (subcritical regime).

\item\label{intro-transition_de_phase_arbres_recursifs_crit} If $\lim_{n \to \infty} n p_n/\ln n = c \in (0,\infty)$, then $n^{-1} I_n$ converges in distribution to $\e_c \wedge 1$, where $\e_c$ is exponentially distributed with rate $c$ (critical regime).
\end{enumerate}
Using more precise information about the cut-tree of random recursive trees, in particular a coupling with a certain random walk due to Iksanov and Möhle \cite{Iksanov_Moehle-A_probabilistic_proof_of_a_weak_limit_law_for_the_number_of_cuts_needed_to_isolate_the_root_of_a_random_recursive_tree}, we improve the convergence \ref{intro-transition_de_phase_arbres_recursifs_sub} as follows.

\begin{thm}\label{thm:limite_nombre_sites_ignifuges_cas_sous_critique}
In the subcritical regime $1 \gg p_n \gg \ln n / n$, we have the convergence in distribution
\begin{equation*}
\frac{p_n}{\ln(1/p_n)} I_n \cvloi \e_1,
\end{equation*}
where $\e_1$ is an exponential random variable with rate $1$.
\end{thm}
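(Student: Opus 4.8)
The plan is to combine the description of the fireproof vertices through the cut-tree with the precise structure of the cutting procedure of a random recursive tree furnished by the Iksanov--Möhle coupling. Recall first the mechanism behind Proposition~\ref{pro:transition_de_phase_generale}: one may run the cutting of $\tn$ in a uniform random order of the edges, declare each edge independently a \emph{fire edge} with probability $p_n$, and then a vertex $v$ is fireproof if and only if none of the edges successively separating $v$ from the rest of $\tn$ (in the order in which they are cut) is a fire edge; in particular $I_n$ is a function of $\Cut(\tn)$ together with the independent fire marks only.

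Next, use the Iksanov--Möhle coupling, which realises the cutting of $\tn$ (aimed at isolating vertex $1$) so that, for i.i.d.\ variables $\xi_1,\xi_2,\dots$ with $\P(\xi_1=j)=\frac{1}{j(j+1)}$, $j\ge 1$, and $S_k=\xi_1+\dots+\xi_k$, the component of vertex~$1$ after the $k$-th cut \emph{that falls in that component} has size $n-S_k$ up to a correction negligible for $k$ in our range, the $k$-th such cut shedding a subtree $P_k$ on $\xi_k$ vertices; thus $\Cut(\tn)$ is, approximately, a spine of length $X_n$ (the number of cuts needed to isolate~$1$) with the small cut-tree of $P_k$ grafted at height~$k$. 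Let $K^\ast+1$ be the index of the first of these ``spine'' cuts which is a fire edge. Distinct spine cuts are distinct edges of $\tn$, so they carry i.i.d.\ fire marks, independent of $(\xi_k)$; hence $K^\ast+1$ is geometric with parameter $p_n$ and $p_nK^\ast\to\e_1$ in distribution. Every vertex of a piece $P_k$ with $k>K^\ast$ is burnt, because the $(K^\ast+1)$-th spine cut, a fire edge, separates it from vertex~$1$; whereas the fireproof vertices of $P_k$ with $k\le K^\ast$ are exactly the fireproof vertices of the fire dynamics run on $P_k$ alone with the same parameter $p_n$ (its spine ancestors being fireproof). Consequently, on the event $K^\ast<X_n$ --- which has probability tending to $1$ since $p_nX_n\to\infty$ in the subcritical regime --- one has $I_n=\sum_{k=1}^{K^\ast}I(P_k)$, writing $I(P_k)$ for the number of fireproof vertices of the recursive tree $P_k$.

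The heart of the matter is to show that $I_n$ equals $S_{K^\ast}=\xi_1+\dots+\xi_{K^\ast}$ to leading order, that is, that $P_1,\dots,P_{K^\ast}$ are almost entirely fireproof. Since $K^\ast$ is of order $1/p_n$, all of $P_1,\dots,P_{K^\ast}$ have size well below the threshold $\asymp\ln(1/p_n)/p_n$ beyond which $p_n$ ceases to be supercritical for a recursive tree of that size; quantitatively, the expected number of $k\le K^\ast$ with $\xi_k\ge\ln(1/p_n)/p_n$ equals $\P(\xi_1\ge\ln(1/p_n)/p_n)\,\E[K^\ast]\to 0$. For the remaining, supercritical pieces, the estimates behind Proposition~\ref{pro:transition_de_phase_generale} (or a direct application of the Iksanov--Möhle coupling inside $P_k$) yield a bound of the form $\E[\,\xi_k-I(P_k)\mid\xi_k\,]\le C\,p_n\xi_k^2/\ln\xi_k$, and summing it over $k\le K^\ast$ gives $\sum_{k\le K^\ast}(\xi_k-I(P_k))=o_{\P}(\ln(1/p_n)/p_n)$, so that $I_n=S_{K^\ast}(1+o_{\P}(1))$. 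Finally, $(\xi_k)$ is a \emph{relatively stable} random walk, i.e.\ $S_k/(k\ln k)\to 1$ in probability as $k\to\infty$; since $K^\ast$ is independent of $(\xi_k)$, tends to infinity in probability, and satisfies $\ln K^\ast=\ln(1/p_n)(1+o_{\P}(1))$ (because $p_nK^\ast\to\e_1$ with $\e_1>0$ almost surely), it follows that $I_n=S_{K^\ast}(1+o_{\P}(1))=K^\ast\ln(1/p_n)(1+o_{\P}(1))$, whence $\frac{p_n}{\ln(1/p_n)}I_n\to\e_1$ in distribution by Slutsky's lemma.

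I expect the main difficulty to be the bookkeeping of the two approximations: $\Cut(\tn)$ is only \emph{approximately} a spine with small bushes, and the step distribution in the Iksanov--Möhle coupling is truncated at the current component size, so one must verify that neither affects the leading order throughout the range $\ln n/n\ll p_n\ll 1$; a secondary difficulty is the uniform control of the burnt mass inside the (heavy-tailed, hence occasionally large) pieces $P_k$. By contrast, the relative stability of $(\xi_k)$ --- which is precisely what converts the geometric variable $K^\ast$ and the $\ln(1/p_n)$ normalisation into the clean exponential limit --- is classical and should require no new input.
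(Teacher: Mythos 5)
Your proof follows essentially the same route as the paper's: realise the dynamics on the cut-tree, identify via the Iksanov--M\"ohle coupling the position of the first fire mark on the spine (your $K^\ast$, the paper's $\sigma(n)$), observe $p_nK^\ast\Rightarrow\e_1$, pass from $K^\ast$ to $S_{K^\ast}$, and invoke the relative stability $S_k/(k\ln k)\to 1$ together with $\ln K^\ast\sim\ln(1/p_n)$. The one place where you genuinely diverge is also where there is a gap: the proof that the shed pieces $P_k$ ($=\t_{n,k}'$) are almost entirely fireproof. You assert a first-moment bound $\E[\xi_k-I(P_k)\mid\xi_k]\le Cp_n\xi_k^2/\ln\xi_k$, crediting it to ``the estimates behind Proposition~\ref{pro:transition_de_phase_generale}'' or to ``a direct application of the Iksanov--M\"ohle coupling inside $P_k$''. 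Unwinding Proposition~\ref{pro:transition_de_phase_generale}'s proof, this amounts to $\E[L_{m,1}]=O(m/\ln m)$ uniformly in $m$, where $L_{m,1}$ is the cut-tree height of a \emph{uniformly chosen} leaf of $\Cut(\t_m)$. The paper only invokes the distributional convergence $\frac{\ln m}{m}L_{m,1}\to\beta_1$ from Kuba--Panholzer (equation~\eqref{eq:convergence_longueur_cut_tree_reduit_RRT}), which does not by itself give a moment bound, and the Iksanov--M\"ohle coupling controls $\zeta(m)$, the height of the leaf $\{1\}$ (the root), not of a uniform leaf, so neither of your two proposed sources actually supplies the estimate as stated.

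The bound is nevertheless true (one can derive $\E\bigl[\sum_v h(\{v\})\bigr]=O(m^2/\ln m)$ by a Meir--Moon--type recursion, or from exact formulas for the number of cuts to isolate a random node), so the gap is patchable; but as written the key step is unjustified. The paper closes this step differently and more economically: it bounds the probability that a uniform leaf of $\bigcup_{i\le\sigma(n)}\Cut(\t_{n,i}')$ is fireproof from below by $\E\bigl[(1-p_n)^{\max_i\mathrm{Depth}(\Cut(\t_{n,i}'))}\bigr]$ and shows $p_n\max_{i\le\sigma(n)}\mathrm{Depth}(\Cut(\t_{n,i}'))\to 0$ in probability, using only the crude inequality $\mathrm{Depth}(\Cut(T))\le|T|$, a binomial count of the number of $\xi_i$ exceeding $z/p_n$, and the coupling bound $\mathrm{Depth}(\Cut(\t_k))\le\lambda(k)+\max_{i\le\lambda(k)}\xi_i+(k-S_{\lambda(k)})$. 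Since the ratio $I_n/S_{\sigma(n)}\in[0,1]$, convergence of its expectation to~$1$ suffices, and no moment estimate for $L_{m,1}$ is needed. If you replace your first-moment bound by this max-depth argument, the rest of your proof goes through as you have written it.
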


We further study the connectivity in the fireproof forest. We show the following asymptotic estimate for the probability that the root and a uniform random vertex belong to the same fireproof subtree, in both the critical and supercritical cases.

\begin{thm}\label{thm:probabilite_site_uniforme_et_racine_dans_le_meme_arbre_ignifuge}
Let $c \in [0, \infty)$ and $p_n$ such that $\lim_{n \to \infty} n p_n / \ln n = c$. Let also $X_n$ be a uniform random vertex in $[n]$ independent of $\tn$ and the fire dynamics. Then the probability that $X_n$ and $1$ belong to the same fireproof subtree converges towards $\ex^{-c}$ as $n \to \infty$.
\end{thm}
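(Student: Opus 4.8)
The plan is to follow the route used for Proposition~\ref{pro:transition_de_phase_generale}: encode the event in the cut‑tree $\Tn$ of $\tn$ and reduce to the classical asymptotics of the number of cuts needed to isolate a vertex of a random recursive tree.

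\emph{A formula for the probability.} Since $\tn$ is a tree, $X_n$ and $1$ lie in the same fireproof subtree if and only if the path of $\tn$ joining them is entirely contained in it, which happens if and only if every edge of $\tn$ incident to a vertex of that path is fireproof. Recall that the dynamics can be read off the cut‑tree $\Tn$ (see \cite{Bertoin-Fires_on_trees, Marzouk-Fires_large_cayley_trees}): an edge $e$ of $\tn$ ends up fireproof exactly when $e$ together with all its ancestors in $\Tn$ receives the fireproof mark. Taking the union of these $e$-to-root paths in $\Tn$ over all edges $e$ incident to the path from $1$ to $X_n$, one recognises precisely the set of internal vertices of $\Tn$ that are ancestral to at least one leaf of $\Tn$ lying on that path; write $N_n$ for its cardinality. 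As the marks are i.i.d.\ Bernoulli$(p_n)$ and independent of $\Tn$ and of $X_n$,
\begin{equation*}
\P\bigl(\text{$X_n$ and $1$ lie in the same fireproof subtree}\bigr) = \E\bigl[(1-p_n)^{N_n}\bigr].
\end{equation*}

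\emph{The key estimate $\tfrac{\ln n}{n}N_n\to 1$ in probability.} Let $H_n$ be the height in $\Tn$ of the leaf labelled $1$, i.e.\ the number of cuts needed to isolate the root of $\tn$. Since $1$ lies on the path, $N_n\ge H_n$, and by the results of Iksanov and Möhle \cite{Iksanov_Moehle-A_probabilistic_proof_of_a_weak_limit_law_for_the_number_of_cuts_needed_to_isolate_the_root_of_a_random_recursive_tree} (going back to Meir and Moon) $\tfrac{\ln n}{n}H_n\to 1$ in probability; in particular $\E[(1-p_n)^{N_n}]\le \E[(1-p_n)^{H_n}]\to\ex^{-c}$, which already gives the upper bound $\limsup_n\P(\cdots)\le\ex^{-c}$. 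For the matching lower bound one decomposes $\Tn$ along the spine from the leaf $1$ to the root of $\Tn$: an internal vertex counted in $N_n$ but not in $H_n$ lies in one of the subtrees of $\Tn$ branching off this spine, and such a subtree is the cut‑tree of a fragment of $\tn$ that detaches from the block of the root exactly when one of the edges of the path from $1$ to $X_n$ is cut. Only $O(\ln\ln n)$ such path‑edges are cut while their endpoint closest to the root is still attached to it (a left‑to‑right‑minima count among the $\Theta(\ln n)$ edges of the path), and, by the fine description of how a large random recursive tree is dismantled, each of these detaching fragments has, with high probability, size negligible before $n/\ln n$ — so dissolving the corresponding portion of the path costs only $o(n/\ln n)$ further cuts. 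Summing over the $O(\ln\ln n)$ fragments yields $N_n-H_n=o_\P(n/\ln n)$, hence $\tfrac{\ln n}{n}N_n\to 1$ in probability.

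\emph{Conclusion.} Since $p_n\to 0$ while $np_n/\ln n\to c$, the previous step gives $p_nN_n\to c$ in probability, so that $(1-p_n)^{N_n}=\exp\bigl(N_n\ln(1-p_n)\bigr)=\exp\bigl(-p_nN_n(1+o(1))\bigr)\to\ex^{-c}$ in probability; as $0\le(1-p_n)^{N_n}\le 1$, dominated convergence gives $\E[(1-p_n)^{N_n}]\to\ex^{-c}$, which is the claim. The hard part is the second step, namely the quantitative statement that the path leaves the block of the root through fragments that are negligible at the scale $n/\ln n$; this is where the precise structure of $\Tn$, and in particular Iksanov and Möhle's coupling with a random walk, enters.
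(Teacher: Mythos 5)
Your reformulation is correct and matches the paper's: the event that $X_n$ and $1$ lie in the same fireproof subtree is exactly the event that no internal node of $\Cut(\tn)$ ancestral to one of the leaves $\{V_0\},\dots,\{V_{h(X_n)}\}$ carries a mark, so the probability is $\E[(1-p_n)^{N_n}]$; your $N_n$ is precisely the exponent $h(X_n)+\sum_{i}\zeta_i(|T_i|)$ appearing in \eqref{eq:proba_qu_une_branche_soit_ignifugee}. The upper bound via $N_n\ge H_n=\zeta(n)$, the known limit $\frac{\ln n}{n}\zeta(n)\to1$ in probability, and bounded convergence is sound.

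The gap lies in the lower bound, that is, in showing $N_n\le(1+o(1))\,n/\ln n$ with high probability. Your decomposition of $\Cut(\tn)$ along the spine from $[n]$ to $\{1\}$, and the left-to-right-minima count of $O(\ln\ln n)$ off-spine subtrees carrying path vertices, are plausible. But the key assertion that each of these detaching fragments has size negligible before $n/\ln n$ is neither proved nor, as stated, true: the Iksanov--M\"{o}hle coupling makes the \emph{unconditional} fragment sizes i.i.d.\ copies of $\xi$, typically $O(1)$, but conditioning on a fragment containing a path vertex biases it heavily. When a path edge $\{V_a,V_{a+1}\}$ is cut from the spine, the detaching piece carries everything on the far side of the path and can be a macroscopic fraction of the current root block, so you cannot invoke the generic behaviour of the $\xi_i$. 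Even with a size bound of $O(n/\ln n)$ per fragment, the step ``dissolving the corresponding portion of the path costs only $o(n/\ln n)$ further cuts'' would still need an isolation estimate for an unbounded number of targets (order $\ln n$ of them) inside each fragment, which the cited results, stated for a bounded number of targets, do not directly supply. Finally, note that a first-moment bound together with $N_n\ge\zeta(n)$ would not suffice: Markov only yields $\limsup_n\P(N_n\ge(1+\varepsilon)n/\ln n)\le(1+\varepsilon)^{-1}$, so some second-moment or concentration input is unavoidable.

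The paper circumvents all of this by decomposing $\tn$ itself rather than $\Cut(\tn)$: removing the path edges produces, conditionally on their vertex sets, independent random recursive trees $T_0,\dots,T_{h(X_n)}$ whose sizes form a discrete stick-breaking process on $[n]$; the first and second moments of $\sum_i\zeta_i(|T_i|)$ are then computed explicitly via a size-biased-pick identity combined with Meir and Moon's asymptotics for $\E[\zeta(k)]$ and $\E[\zeta(k)^2]$. This gives $\frac{\ln n}{n}N_n\to1$ in $L^2$ (hence in probability) cleanly, without ever having to control the conditional law of the fragments that carry path vertices --- which is exactly the part you flag as ``the hard part.''
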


This enables us to deduce estimates on the size of the largest fireproof component. We shall see that with high probability as $n \to \infty$,
\begin{enumerate}
\item in the supercritical regime, there exists a giant fireproof component of size $\sim n$,

\item in the subcritical regime, the largest fireproof component has size of order $p_n^{-1} \ll n / \ln n$,

\item in the critical regime, the largest fireproof component has size of order $n / \ln n$ if the root burns and $\sim n$ if the root is fireproof.
\end{enumerate}

Finally, we study the sizes of the burnt subtrees, in order of appearance, in the critical regime $p_n \sim c \ln n / n$. The one which contains the root (if the root burns) has size of order $n$, whereas for the others, the logarithm of their size, rescaled by $\ln n$ converges in distribution to a limit strictly smaller than $1$. More precisely, let $\gamma_0 = 0$ and consider a sequence $(\gamma_j - \gamma_{j-1})_{j \ge 1}$ of i.i.d. exponential random variables with rate $c$, so each $\gamma_j$ has the gamma (also called Erlang) distribution with parameters $j$ and $c$. Consider also, conditional on $(\gamma_j)_{j \ge 1}$, a sequence $(Z_j)_{j \ge 1}$ of independent random variables, where $Z_j$ is distributed as an exponential random variable with rate $\gamma_j$ conditioned to be smaller than $1$. For every $i \in \N$, denote by $\theta_{n,i}$ the time at which the $i$-th fire occurs and by $b_{n,i}$ the size of the corresponding burnt subtree of $\tn$.

\begin{thm}\label{thm:limite_premiers_arbres_brules_cas_critique}
Consider the critical regime $p_n \sim c \ln n / n$. We have for every $i \in \N$,
\begin{equation*}
\frac{\ln n}{n} (\theta_{n,1}, \dots, \theta_{n,i}) \cvloi (\gamma_1, \dots, \gamma_i).
\end{equation*}
Furthermore, for every $j \in \N$, the probability that the root burns with the $j$-th fire converges to
\begin{equation*}
\E\bigg[\ex^{-\gamma_j} \prod_{i=1}^{j-1} (1 - \ex^{-\gamma_i})\bigg]
\end{equation*}
as $n \to \infty$. Finally, on this event, for every $k \ge j+1$, the vector
\begin{equation*}
\bigg(\frac{\ln b_{n,1}}{\ln n}, \dots, \frac{\ln b_{n, j-1}}{\ln n}, \frac{b_{n,j}}{n}, \frac{\ln b_{n, j+1}}{\ln n}, \dots, \frac{\ln b_{n, k}}{\ln n}\bigg)
\end{equation*}
converges in distribution towards
\begin{equation*}
(Z_1, \dots, Z_{j-1}, \ex^{-\gamma_j}, Z_{j+1}, \dots, Z_k).
\end{equation*}
\end{thm}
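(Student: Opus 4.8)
The plan is to transport the fire dynamics onto the cut-tree $\Cut(\tn)$, exactly as in the proofs of Propositions~\ref{pro:transition_de_phase_generale} and~\ref{pro:limite_foret_ignifugee_et_arbres_brules}, and then to feed in the fine description of the cut-tree of a random recursive tree coming from the Iksanov--Möhle coupling~\cite{Iksanov_Moehle-A_probabilistic_proof_of_a_weak_limit_law_for_the_number_of_cuts_needed_to_isolate_the_root_of_a_random_recursive_tree}. Recall the encoding: one deletes the edges of $\tn$ in uniform random order, tagging each deletion with an independent $\mathrm{Bernoulli}(p_n)$ mark (fire or fireproof); the fire dynamics is read off this marked cut-tree, the $i$-th burnt subtree being the block of the induced fragmentation that is still ``active'' when the $i$-th fire mark is met. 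The two facts I will use are: (a) the number of deletions before the $i$-th fire mark is, to leading order, geometric with parameter $p_n$ conditionally on the past, hence concentrates around $\gamma_i n/\ln n$; and (b) for a random recursive tree, after a proportion $t = o(1)$ of the edges has been deleted, the block of a vertex $v$ is, by the random-walk coupling (whose steps lie in the domain of attraction of a $1$-stable law), of size $n^{1-t(1+o(1))}$ when $v$ is the root, whereas for a generic $v$ the logarithm of the block size divided by $\ln n$ is asymptotically the distance (rescaled by $\ln n$) from $v$ to the first deleted edge on its ancestral line, which is an exponential variable of rate $t\ln n$ conditioned not to exceed the depth of $v$, i.e.\ conditioned to be $<1$.

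I would first settle the fire times. Nothing has burnt before the first fire, so $\theta_{n,1}$ is exactly geometric with parameter $p_n$, whence $\frac{\ln n}{n}\theta_{n,1}\to\gamma_1$. For the inductive step one observes that between consecutive fires the number of flammable edges stays $n(1+o(1))$: the subtrees burnt by the first $j-1$ fires have size $o(n/\ln n)$ with high probability, and at the fire that burns the root — where a macroscopic subtree disappears — the contraction of the flammable set is exactly compensated by the burnt edges one skips afterwards, so the proportion of deleted edges at the $k$-th fire is still $\gamma_k/\ln n\,(1+o(1))$. Hence $p_n(\theta_{n,i}-\theta_{n,i-1})$ converges to an independent $\e_1$, and summing yields $\frac{\ln n}{n}(\theta_{n,1},\dots,\theta_{n,i})\to(\gamma_1,\dots,\gamma_i)$.

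Next, the probability that the root burns and the block sizes. Conditionally on the rescaled fire times $(\gamma_i)$, at the $i$-th fire the igniting edge is, up to a negligible error, uniform among the $\sim n$ flammable edges, and the flammable forest is the fragmentation of $\tn$ (deprived of the $o(n)$ already-burnt vertices) at deletion proportion $\gamma_i/\ln n\,(1+o(1))$. If the root is still present, fact (b) gives: its block has $\sim n\ex^{-\gamma_i}$ vertices, so it is burnt with probability $\to\ex^{-\gamma_i}$ and then $b_{n,i}/n\to\ex^{-\gamma_i}$; on the complementary event the burnt block is the block of a uniform non-root vertex and $\ln b_{n,i}/\ln n$ converges to an exponential of rate $\gamma_i$ conditioned to be $<1$, the conditioning being precisely the event that the relevant ancestral segment does not reach the root. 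Once the root has burnt, one uses that the complement of the macroscopic burnt block still carries a component of size of order $n$ which, since the subtree rooted at any vertex of a recursive tree is itself a uniform recursive tree, is again distributed as a (smaller) random recursive tree; running the same analysis there produces the same limit law for the subsequent $Z_k$. Finally, since distinct fires involve uniform vertices lying at mutual graph distance $\Theta(\ln n)$, the portions of the tree governing the successive block sizes (ancestral segments, and blocks which are descendant subtrees of far-apart vertices) are asymptotically independent, so the one-fire limits hold jointly; integrating over $(\gamma_i)$ gives the announced joint convergence, and in particular $\P(\text{the root burns with the }j\text{-th fire})\to\E\bigl[\ex^{-\gamma_j}\prod_{i=1}^{j-1}(1-\ex^{-\gamma_i})\bigr]$.

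The main obstacle is this third step: turning the heuristic ``the flammable forest is the fragmentation of $\tn$ at proportion $\gamma_i/\ln n$'' into quantitative statements with errors small enough to survive the non-linear renormalisations — in particular the concentration of the root's block around $n^{1-t}$, which is not delivered by a first-moment computation and needs either a second-moment bound for bond percolation on $\tn$ or the full strength of the random-walk coupling — and then organising the repeated use of these estimates (over the first $j-1$ fires, then over the fires occurring after the root has burnt, on the surviving macroscopic recursive subtree) so that both the mutual independence of the $Z_i$'s given $(\gamma_i)$ and the truncation at $1$ come out exactly as stated. Propagating the $o(n)$ errors coming from the previously burnt subtrees, uniformly over the finitely many fires considered, is the routine but somewhat laborious remaining part.
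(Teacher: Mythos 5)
Your plan has the same architecture as the paper's four-step proof: identify the fire times as truncated geometrics, treat the root's block and a generic block by separate arguments, and obtain joint convergence from the early coalescence of the relevant ancestral lines. The gap you flag at the end is, however, the entire content of the proof, and your ``fact (b)'' would not survive an attempt to make it rigorous as stated. The Iksanov--M\"{o}hle coupling gives the root's block size as $n-S_m$ after $m$ cuts \emph{on the root's lineage}, not after a prescribed number of \emph{total} cuts, and the conversion between these two clocks is nonlinear -- it is exactly what turns the additive $n(1-\e_c)$ of Proposition~\ref{pro:composante_brulee_macro_contient_la_racine} into the multiplicative $n\ex^{-\gamma_j}$ here. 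The paper does not extract the root-block concentration from the walk at all; it passes to a continuous-time description and cites Bertoin's Corollary~2 in \cite{Bertoin-Sizes_of_the_largest_clusters_for_supercritical_percolation_on_random_recursive_trees} as a black box, which gives a root cluster of size $\sim n\ex^{-s}$ for bond percolation on $\tn$ at parameter $1-s/\ln n$.

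For a generic burnt block, ``the block of a uniform non-root vertex, whose log-size is the distance to the first fireproof edge above'' is the correct heuristic, but the paper proves it via a specific construction that you would need to supply. It locates the vertex $Z_{n,i}$ on the path from $1$ to the igniting vertex $A_{n,i}$ that is closest to $A_{n,i}$ among those incident to a fireproof edge, shows the burnt block is comparable in size to the descendant subtree $\tn(Z_{n,i})$, derives the law of $d(A_{n,i},Z_{n,i})/\ln n$ by a survival computation conditional on $\theta_{n,i}$ (whence the exponential of rate $\gamma_i$ truncated at $1$), and then converts distance to size via Lemma~\ref{lem:taille_sous_arbre_engendre_par_k}, a P\'{o}lya-urn estimate for $|\tn(v)|$ that is a separate ingredient not delivered by the random-walk coupling. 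The conditional independence of the $Z_i$ comes from Lemma~\ref{lem:k_sites_uniformes_se_separent_tot}: the ancestral paths of two igniting vertices coalesce at height $O(1)$, a statement stronger and more directly useful than your ``mutual graph distance $\Theta(\ln n)$''. Finally, your plan of handling the fires after the root burns by relocating onto the macroscopic surviving recursive component would force you to track which component each subsequent fire lands in; the paper's observation is cleaner and works directly: the path from $1$ to $A_{n,j+1}$ still carries a fireproof edge (the one that arrested the $j$-th fire), so the spine argument applies unchanged.
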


This work leaves open the question of the total number of burnt vertices in the supercritical regime: as in Theorem \ref{thm:limite_nombre_sites_ignifuges_cas_sous_critique} for the subcritical one, one would ask for a convergence in distribution of $n-I_n$, rescaled by some sequence which is negligible compared to $n$. This holds true for Cayley trees, see \cite{Marzouk-Fires_large_cayley_trees}.

Let us finally mention that the present fire dynamics on trees is closely related to the problem of isolating nodes, first introduced by Meir \& Moon \cite{Meir_Moon-Cutting_down_random_trees}. In this model, one is given a tree $T_n$ of size $n$ and $k$ vertices (chosen randomly or deterministically), say, $u_1, \dots, u_k$; then the edges of $T_n$ are successively removed in a uniform random order, and at each step, if a connected component newly created does not contain any of the $k$ selected vertices, it is immediately discarded. This random dynamics eventually end when the graph is reduced to the $k$ selected singletons, we say that the $k$ vertices have been isolated. The main interest in \cite{Meir_Moon-Cutting_down_random_trees} and subsequent papers concerns the behavior of the (random) number of steps $X(T_n; u_1, \dots, u_k)$ of this algorithm, as $n \to \infty$ and the number of selected vertices $k$ is  fixed. We shall see that $X(T_n; u_1, \dots, u_k)$ is related to the fire dynamics on $T_n$. Indeed, if one sees fireproof edges as being removed from the tree, then a vertex is fireproof if and only if it is isolated. In the context of random recursive trees, we will rely in particular on results of Meir \& Moon \cite{Meir_Moon-Cutting_down_recursive_trees} who estimated the first two moments of $X(\tn; 1)$, Iksanov \& M\"{o}hle \cite{Iksanov_Moehle-A_probabilistic_proof_of_a_weak_limit_law_for_the_number_of_cuts_needed_to_isolate_the_root_of_a_random_recursive_tree} who proved a limit theorem for the latter using probabilistic argument (it was first derived by Drmota \emph{et al.} \cite{Drmota_Iksanov_Moehle_Roesler-A_limiting_distribution_for_the_number_of_cuts_needed_to_isolate_the_root_of_a_random_recursive_tree} with a different proof), as well as Bertoin \cite{Bertoin-The_cut_tree_of_large_recursive_trees} who obtained a limit theorem for $X(\tn; U_1, \dots, U_k)$ when $U_1, \dots, U_k$ are $k$ independent uniform random vertices of $\tn$ (which was first obtained by Kuba \& Panholzer \cite{Kuba_Panholzer-Multiple_isolation_of_nodes_in_recursive_trees}).

The rest of this paper is organized as follows: in Section \ref{section21}, we first state and prove the general phase transition by relating the fire dynamics on $T_n$ to a point process on its cut-tree, following the route of \cite{Marzouk-Fires_large_cayley_trees}. We then focus on random recursive trees. After recalling some known results in Section \ref{section23}, in particular a coupling of Iksanov and M\"{o}hle \cite{Iksanov_Moehle-A_probabilistic_proof_of_a_weak_limit_law_for_the_number_of_cuts_needed_to_isolate_the_root_of_a_random_recursive_tree}, we prove Theorem \ref{thm:limite_nombre_sites_ignifuges_cas_sous_critique} in Section \ref{section3}. Section \ref{section4} is devoted to the proof of Theorem \ref{thm:probabilite_site_uniforme_et_racine_dans_le_meme_arbre_ignifuge} and the existence of a giant fireproof component. Finally, we prove Theorem \ref{thm:limite_premiers_arbres_brules_cas_critique} in Section \ref{section5}.

%%%%%%%%%%%%%%%%%%%%%%%%%%%%%%%%%%%%%%%%%%%%%%%%%%%%%
%%%%%%%%%%%%%%%%%%%%%%%%%% SECTION 2 %%%%%%%%%%%%%%%%%%%%
%%%%%%%%%%%%%%%%%%%%%%%%%%%%%%%%%%%%%%%%%%%%%%%%%%%%%
\section{Cut-trees and fires}\label{section2}

\subsection{General results}\label{section21}

In this section, we study the fire dynamics on a general sequence of random trees $T_n$ with $n$ labelled vertices, say, $[n] = \{1, \dots, n\}$. Let us recall the definition of the cut-tree of a finite tree introduced by Bertoin \cite{Bertoin-Fires_on_trees}, as well as the point process on the latter which is related to the fire dynamics on the original tree as described in \cite{Marzouk-Fires_large_cayley_trees}. We associate with $T_n$ a random rooted binary tree $\Cut(T_n)$ with $n$ leaves which records the genealogy induced by the fragmentation of $T_n$: each vertex of $\Cut(T_n)$ corresponds to a subset (or block) of $[n]$, the root of $\Cut(T_n)$ is the entire set $[n]$ and its leaves are the singletons $\{1\}, \dots, \{n\}$. We remove successively the edges of $T_n$ in a uniform random order; at each step, a subtree of $T_n$ with set of vertices, say, $V$, splits into two subtrees with sets of vertices, say, $V'$ and $V''$ respectively; in $\Cut(T_n)$, $V'$ and $V''$ are the two offsprings of $V$. Notice that, by construction, the set of leaves of the subtree of $\Cut(T_n)$ that stems from some block coincides with this block. See Figure \ref{fig:definition_cut_tree} below for an illustration.

\begin{figure}[ht] \centering
\begin{small}
\hfill
									%%%%%%%%%%%%%%%%%
									%%%% ARBRE À N SITES %%%
									%%%%%%%%%%%%%%%%%
\begin{tikzpicture}[scale = 1.2]
\draw%[line width=1pt]
	(0,0) -- (-0.24,0.97)			% arête 1
	(-0.24,0.97) -- (-1.02,1.6)		% arête 2
	(0,0) -- (0.84,0.55)			% arête 3
	(-0.93,0.37) -- (-1.84,0.78)		% arête 4
	(0.84,0.55) -- (0.89,1.55)		% arête 5
	(-1.02,1.6) -- (-1.87,2.12)		% arête 6
	(0,0) -- (-0.93,0.37)			% arête 7
	(0.84,0.55) -- (1.73,1)		% arête 8
	(-0.24,0.97) -- (-0.28,1.97)		% arête 9
	(-1.02,1.6) -- (-1.21,2.58)		% arête 10
;
\draw
	(0,0)			node [circle, fill=white, draw]	{a}
	(-0.24,0.97)	node [circle, fill=white, draw]	{b}
	(0.84,0.55)	node [circle, fill=white, draw]	{c}
	(0.89,1.55)	node [circle, fill=white, draw]	{d}
	(-0.93,0.37)	node [circle, fill=white, draw]	{e}
	(-1.02,1.6)		node [circle, fill=white, draw]	{f}
	(-1.21,2.58)	node [circle, fill=white, draw]	{g}
	(-0.28,1.97)	node [circle, fill=white, draw]	{h}
	(-1.84,0.78)	node [circle, fill=white, draw]	{i}
	(-1.87,2.12)	node [circle, fill=white, draw]	{j}
	(1.73,1)		node [circle, fill=white, draw]	{k}
;
\begin{footnotesize}
\draw
	(0.04,0.59)	node		{1}
	(-0.52,1.46)	node		{2}
	(0.5,0.13)		node		{3}
	(-1.31,0.77)	node		{4}
	(1.04,1.11)	node		{5}
	(-1.36,2.05)	node		{6}
	(-0.39,0.38)	node		{7}
	(1.35,0.58)	node		{8}
	(-0.11,1.54)	node		{9}
	(-0.95,2.19)	node		{10}
;
\end{footnotesize}
\end{tikzpicture}
\hfill
									%%%%%%%%%%%%%%
									%%%% CUT-TREE %%%
									%%%%%%%%%%%%%%
\begin{tikzpicture}
[level 1/.style={sibling distance=4.5 cm},
level 2/.style={sibling distance=2.2 cm},
level 3/.style={sibling distance=1.2 cm},
level 4/.style={sibling distance=1 cm}]
\node[individu]{abcdefghijk} [grow'=up,level distance=.8cm]
	child{node[individu]{acdeik}
		child{node[individu]{aei}
			child{node[individu]{ae}
				child{node[individu=circle]{a}}
				child{node[individu=circle]{e}}
			}
			child{node[individu=circle]{i}}
		}
		child{node[individu]{cdk}
			child{node[individu]{ck}
				child{node[individu=circle]{c}}
				child{node[individu=circle]{k}}
			}
			child{node[individu=circle]{d}}
		}
	}
	child{node[individu]{bfghj}
		child{node[individu]{bh}
			child{node[individu=circle]{b}}
			child{node[individu=circle]{h}}
		}
		child{node[individu]{fgj}
			child{node[individu]{fg}
				child{node[individu=circle]{f}}
				child{node[individu=circle]{g}}
			}
			child{node[individu=circle]{j}}
		}
	}
;
\end{tikzpicture}
\hfill
\end{small}
\caption{A tree with the order of cuts on the left and the corresponding cut-tree on the right. The order of the children in the cut-tree is irrelevant and has been chosen for aesthetic reasons only.}
\label{fig:definition_cut_tree}
\end{figure}
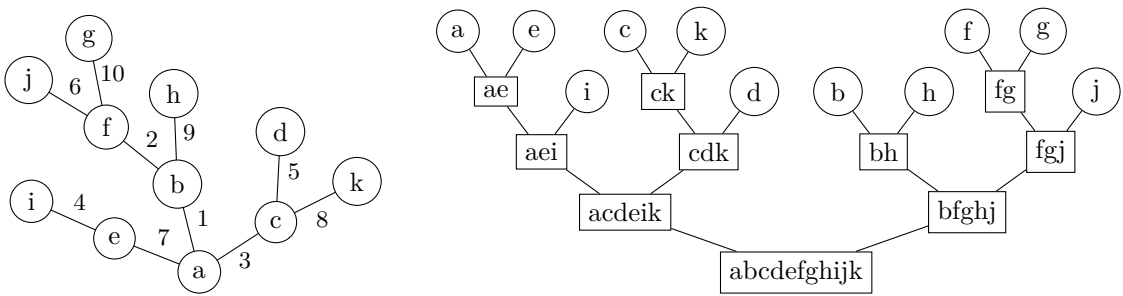

The cut-tree is an interesting tool to study the isolation of nodes in a tree described in the introduction. Indeed, fix $k$ vertices in the tree $T_n$, say, $u_1, \dots, u_k$, and consider the isolation procedure. It should be plain that the subtrees which are not discarded correspond to the blocks of the tree $\Cut(T_n)$ reduced to its branches from its root to the $k$ leaves $\{u_1\}, \dots, \{u_k\}$. As a consequence, the number of steps of this isolation procedure is given by the number of internal (i.e. non-singleton) nodes of this reduced tree or, equivalently, its length (i.e. number of edges) minus the number of distinct leaves plus one.

We endow $\Cut(T_n)$ with a mark process $\varphi_n$: at each generation, each internal block is marked independently of the others with probability $p_n$ provided that none of its ancestors has been marked, and not marked otherwise. This is equivalent to the following two-steps procedure: mark first every internal block independently with probability $p_n$, then along each branch from the root to a leaf, keep only the closest mark to the root and erase the other marks. Throughout this paper, ``marks on $\Cut(T_n)$'' shall always refer to the marks induced by $\varphi_n$.

Recall the fire dynamics on $T_n$ described in the introduction; if we remove each edge as soon as it is fireproof, then, when an edge is set on fire, it immediately burns the whole subtree which contains it. Observe that the only information which is lost with this point of view is the geometry of the fireproof forest. We can couple this dynamics on $T_n$ and the cut-tree $\Cut(T_n)$ endowed with the marks induced by $\varphi_n$ in such a way that the marked blocks of $\Cut(T_n)$ correspond to the burnt subtrees of $T_n$ and the leaves of $\Cut(T_n)$ which do not possess a marked ancestor correspond to the fireproof vertices of $T_n$, see Figure \ref{fig:foret_et_arbre_binaire_marque} for an illustration. We implicitly assume in the sequel that the fire dynamics on $T_n$ and the pair $(\Cut(T_n), \varphi_n)$ are coupled in this way.

\begin{figure}[ht] \centering
\hfill
\begin{small}
%									%%%%%%%%%%%%%%%%%%%%%%%%%
%									%%%% FORÊT APRÈS LA DYNAMIQUE %%%
%									%%%%%%%%%%%%%%%%%%%%%%%%%
\begin{tikzpicture}[scale=1.2, line width=1pt]
\draw
%\draw[double]
%	(0,0) -- (-0.24,0.97)			% arête 1
%	(-0.24,0.97) -- (-1.02,1.6)		% arête 2
%	(0,0) -- (0.84,0.55)			% arête 3
%	(0.84,0.55) -- (0.89,1.55)		% arête 5
	(-0.24,0.97) -- (-0.28,1.97)		% arête 9
;
\draw [dashed, color=rouge]
%\draw[dashed]
	(-0.93,0.37) -- (-1.84,0.78)		% arête 4
	(0,0) -- (-0.93,0.37)			% arête 7
	(-1.02,1.6) -- (-1.87,2.12)		% arête 6
	(-1.02,1.6) -- (-1.21,2.58)		% arête 10
	(0.84,0.55) -- (1.73,1)		% arête 8
;
\draw
	(0,0)			node [circle, color=rouge, fill=white, draw, dashed]	{a}
	(-0.24,0.97)	node [circle, line width=1pt, color=bleu, fill=white, draw]	{b}
	(0.84,0.55)	node [circle, color=rouge, fill=white, draw, dashed]	{c}
	(0.89,1.55)	node [circle, line width=1pt, color=bleu, fill=white, draw]	{d}
	(-0.93,0.37)	node [circle, color=rouge, fill=white, draw, dashed]	{e}
	(-1.02,1.6)		node [circle, color=rouge, fill=white, draw, dashed]	{f}
	(-1.21,2.58)	node [circle, color=rouge, fill=white, draw, dashed]	{g}
	(-0.28,1.97)	node [circle, line width=1pt, color=bleu, fill=white, draw]	{h}
	(-1.84,0.78)	node [circle, color=rouge, fill=white, draw, dashed]	{i}
	(-1.87,2.12)	node [circle, color=rouge, fill=white, draw, dashed]	{j}
	(1.73,1)		node [circle, color=rouge, fill=white, draw, dashed]	{k}
;
\end{tikzpicture}
\hfill
%
%
									%%%%%%%%%%%%%%%%%%%%%%%%%%%
									%%%% CUT-TREE APRÈS LA DYNAMIQUE %%%
									%%%%%%%%%%%%%%%%%%%%%%%%%%%
\begin{tikzpicture}
[level 1/.style={sibling distance=4.5 cm},
level 2/.style={sibling distance=2.2 cm},
level 3/.style={sibling distance=1.2 cm},
level 4/.style={sibling distance=1 cm},
base/.style={draw, line width=1pt, color=bleu},
feuille/.style={draw, circle},
feu/.style={draw, dashed, line width=1pt, color=rouge},
interne_brule/.style={draw, line width=1pt, color=rouge}
]
\node[base]{abcdefghijk} [grow'=up,level distance=.8cm]
	child[base]{node[base]{acdeik}
		child[feu]{node[interne_brule]{aei}
			child{node[interne_brule]{ae}
				child{node[feuille]{a}}
				child{node[feuille]{e}}
			}
			child{node[feuille]{i}}
		}
		child{node[base]{cdk}
			child[feu]{node[interne_brule]{ck}
				child{node[feuille]{c}}
				child{node[feuille]{k}}
			}
			child{node[feuille]{d}}
		}
	}
	child[base]{node[base]{bfghj}
		child{node[base]{bh}
			child{node[feuille]{b}}
			child{node[feuille]{h}}
		}
		child[feu]{node[interne_brule]{fgj}
			child{node[interne_brule]{fg}
				child{node[feuille]{f}}
				child{node[feuille]{g}}
			}
			child{node[feuille]{j}}
		}
	}
;
\end{tikzpicture}
\end{small}
\hfill{}
\caption{The forests after the dynamics on the recursive tree on the left and the corresponding cut-tree on the right: the plain blocks are the ones connected to the root, the dotted ones, those disconnected from the root. The leaves in the root-component of the cut-tree are the fireproof vertices, the other components code the burnt subtrees.}
\label{fig:foret_et_arbre_binaire_marque}
\end{figure}
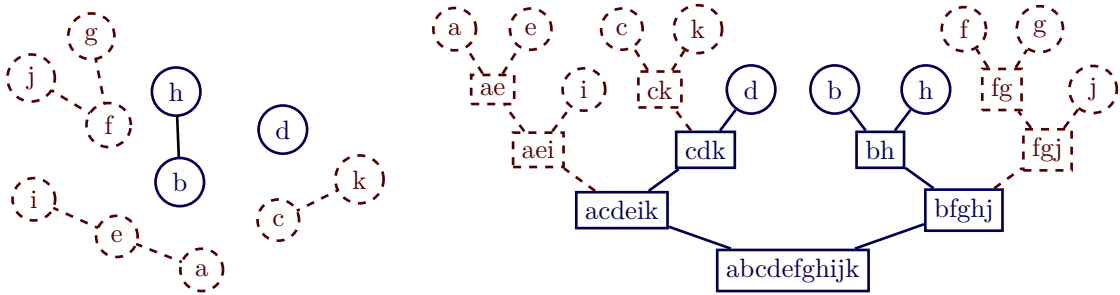

Fix $k \in \N$ and let $R_{n,k}$ be the tree $\Cut(T_n)$ reduced to its branches from its root to $k$ leaves chosen uniformly at random with replacement; denote by $L_{n, k}$ the length of $R_{n,k}$. Let $r : \N \to \R$ be some function such that $\lim_{n \to \infty} r(n) = \infty$. We introduce the following hypothesis:
\begin{equation}\label{eq:hypothese1}\tag{$H_k$}
r(n)^{-1} L_{n, k} \cvloi L_k,
\end{equation}
where $L_k$ is a non-negative and finite random variable. The arguments developed by Bertoin \cite{Bertoin-Fires_on_trees,Bertoin-Almost_giant_clusters_for_percolation_on_large_trees_with_logarithmic_heights} allow us to derive the following result.

\begin{pro}\label{pro:transition_de_phase_generale}
Denote by $I_n$ the number of fireproof vertices in $T_n$.
\begin{enumerate}
\item\label{pro:transition_de_phase_generale_sur} If $(H_1)$ holds and $\lim_{n \to \infty} r(n) p_n = 0$, then $n^{-1} I_n$ converges in probability to $1$.
\item\label{pro:transition_de_phase_generale_sous} If $(H_1)$ holds and $\lim_{n \to \infty} r(n) p_n = \infty$, then for every $\varepsilon > 0$, we have $\limsup_{n \to \infty} \P(I_n > \varepsilon n) \le \varepsilon^{-1}\P(L_1 = 0)$. In particular, if $L_1 > 0$ almost surely, then $n^{-1} I_n$ converges in probability to $0$.
\item\label{pro:transition_de_phase_generale_crit} If \eqref{eq:hypothese1} holds for every $k \in \N$ and $\lim_{n \to \infty} r(n) p_n = c$ with $c \in (0,\infty)$ fixed, then we have the convergence
\begin{equation*}
n^{-1} I_n \cvloi D(c),
\end{equation*}
where the law of $D(c)$ is characterized by its entire moments: for every $k \ge 1$,
\begin{equation}\label{eq:moments_loi_D_infini}
\E[D(c)^k] = \E[\exp(-cL_k)].
\end{equation}
Note that $\P(D(c) = 0) < 1$ and also $\P(D(c) = 1) = 1$ if and only if $\P(L_k = 0) = 1$.
\end{enumerate}
\end{pro}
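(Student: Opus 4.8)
The plan is to express the entire moments of $n^{-1} I_n$ in terms of the reduced cut-trees $R_{n,k}$, following the arguments of Bertoin \cite{Bertoin-Fires_on_trees, Bertoin-Almost_giant_clusters_for_percolation_on_large_trees_with_logarithmic_heights}, and then to conclude by soft weak-convergence arguments together with the method of moments on $[0,1]$.

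\emph{The moment identity.} Fix $k \ge 1$ and let $X_1, \dots, X_k$ be i.i.d.\ uniform on $[n]$, independent of $(\Cut(T_n), \varphi_n)$. The set $A_n \subseteq [n]$ of fireproof vertices is $\sigma(\Cut(T_n), \varphi_n)$-measurable and $I_n = \Card A_n$, so conditioning first on $(\Cut(T_n), \varphi_n)$ gives $\P(X_1, \dots, X_k \in A_n \mid \Cut(T_n), \varphi_n) = (n^{-1} I_n)^k$, whence $\E[(n^{-1} I_n)^k] = \P(X_1, \dots, X_k \in A_n)$. Now use the two-step description of $\varphi_n$: first mark every internal (non-singleton) block of $\Cut(T_n)$ independently with probability $p_n$, then along each branch keep only the mark closest to the root. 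A leaf has a $\varphi_n$-marked ancestor if and only if at least one of its ancestors carries a first-step mark; hence, conditionally on $\Cut(T_n)$ and on $X_1, \dots, X_k$, the event $\{X_1, \dots, X_k \in A_n\}$ is exactly the event that none of the internal nodes of $R_{n,k}$ carries a first-step mark. Writing $N_{n,k}$ for the number of internal nodes of $R_{n,k}$ and $J_{n,k}$ for the number of distinct values among $X_1, \dots, X_k$, one has $N_{n,k} = L_{n,k} - J_{n,k} + 1$ by the identity recalled above, and since the first-step marks are independent Bernoulli$(p_n)$ variables independent of $(\Cut(T_n), X_1, \dots, X_k)$, we obtain for every $k \ge 1$
\[
\E\big[(n^{-1} I_n)^k\big] \;=\; \E\big[(1-p_n)^{N_{n,k}}\big] \;=\; \E\big[(1-p_n)^{L_{n,k} - J_{n,k} + 1}\big].
\]

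\emph{The three regimes.} We will use that $p_n = (r(n) p_n)/r(n) \to 0$ in cases \ref{pro:transition_de_phase_generale_sur} and \ref{pro:transition_de_phase_generale_crit} (since $r(n) \to \infty$), that $J_{n,k} \in \{1, \dots, k\}$ with $\P(J_{n,k} = k) \to 1$ (so $(1-p_n)^{1-J_{n,k}} \to 1$ in probability when $p_n \to 0$), the elementary bounds $0 \le 1 - (1-p_n)^m \le p_n m$ and $(1-p_n)^m \le \ex^{-p_n m}$ for $m \ge 0$, and that $L_{n,k} \ln(1-p_n) \to -c L_k$ in distribution whenever $r(n)^{-1} L_{n,k} \to L_k$ and $r(n) p_n \to c < \infty$ (because then $p_n L_{n,k} \to c L_k$ in distribution by Slutsky and $p_n^2 L_{n,k} \to 0$ in probability). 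In case \ref{pro:transition_de_phase_generale_sur}, take $k = 1$, so $J_{n,1} = 1$ and $p_n L_{n,1} \to 0$ in distribution; as $(1-p_n)^{L_{n,1}} \in [0,1]$, bounded convergence gives $\E[n^{-1} I_n] \to 1$, which forces $n^{-1} I_n \to 1$ in probability. In case \ref{pro:transition_de_phase_generale_sous}, Markov's inequality gives $\P(I_n > \varepsilon n) \le \varepsilon^{-1} \E[n^{-1} I_n] = \varepsilon^{-1} \E[(1-p_n)^{L_{n,1}}]$; splitting the last expectation according to whether $L_{n,1} \le \delta r(n)$ bounds it by $\P(r(n)^{-1} L_{n,1} \le \delta) + \ex^{-\delta\, r(n) p_n}$, where the second term vanishes since $r(n) p_n \to \infty$, while by the Portmanteau theorem the first has $\limsup$ at most $\P(L_1 \le \delta)$; since $\delta > 0$ is arbitrary and $\P(L_1 \le \delta) \downarrow \P(L_1 = 0)$ as $\delta \downarrow 0$ (by non-negativity of $L_1$), we get $\limsup_n \P(I_n > \varepsilon n) \le \varepsilon^{-1} \P(L_1 = 0)$, and convergence in probability to $0$ when $\P(L_1 > 0) = 1$. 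In case \ref{pro:transition_de_phase_generale_crit}, $(1-p_n)^{L_{n,k}} \to \ex^{-c L_k}$ and $(1-p_n)^{1-J_{n,k}} \to 1$ in distribution, hence $(1-p_n)^{N_{n,k}} \to \ex^{-c L_k}$ in distribution, so bounded convergence yields $\E[(n^{-1} I_n)^k] \to \E[\ex^{-c L_k}]$ for every $k \ge 1$.

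\emph{Conclusion.} As $n^{-1} I_n$ is $[0,1]$-valued, the sequence is tight and every subsequential weak limit $D$ is supported on $[0,1]$ with $\E[D^k] = \E[\ex^{-c L_k}]$ for all $k$ by bounded convergence; since a probability measure on a compact interval is determined by its moments, all subsequential limits coincide and $n^{-1} I_n \cvloi D(c)$ with the moments \eqref{eq:moments_loi_D_infini}. Finally $\E[D(c)^k] = \E[\ex^{-c L_k}] > 0$ shows $\P(D(c) = 0) < 1$, and $\P(D(c) = 1) = 1$ if and only if all moments equal $1$, i.e.\ $\E[\ex^{-c L_k}] = 1$, i.e.\ $L_k = 0$ almost surely, for every $k$. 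The only genuinely delicate point is the bookkeeping behind the moment identity, namely reading the fireproof event through the two-step construction of $\varphi_n$ as ``no first-step mark on $R_{n,k}$'' and keeping track of $N_{n,k} = L_{n,k} - J_{n,k} + 1$; everything afterwards is routine analysis.
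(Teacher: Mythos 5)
Your proof is correct and follows essentially the same route as the paper's: the moment identity $\E[(n^{-1}I_n)^k] = \E[(1-p_n)^{N_{n,k}}]$ via the reduced cut-tree $R_{n,k}$, the observation that $N_{n,k}$ and $L_{n,k}$ differ by at most $k-1$, and then passing to the limit under $(H_k)$. You merely spell out details that the paper leaves as ``the three assertions follow'' — the Markov/Portmanteau argument for part (ii) and the method of moments on $[0,1]$ for part (iii) — but there is no difference in substance.
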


\begin{proof}
Fix $k \in \N$; the variable $n^{-1} I_n$ represents the proportion of fireproof vertices in $T_n$, therefore its $k$-th moment is the probability that $k$ vertices of $T_n$ chosen uniformly at random with replacement and independently of the dynamics are fireproof. Using the coupling with $\Cut(T_n)$, the latter is the probability that there is no atom of the point process $\varphi_n$ on $R_{n,k}$. We thus have
\begin{equation*}
\E[(n^{-1} I_n)^k] = \E[(1-p_n)^{X_{n,k}}],
\end{equation*}
where $X_{n,k}$ denotes the number of internal nodes of $R_{n,k}$. Observe that $L_{n,k} - X_{n,k}$ is equal to the number of distinct leaves of $R_{n,k}$ minus one, which is bounded by $k-1$, then \eqref{eq:hypothese1} yields
\begin{equation*}
\lim_{n\to \infty} \E[(n^{-1} I_n)^k] = \E[\exp(-cL_k)]		\quad\text{when}\quad r(n) p_n \to c \in [0, \infty),
\end{equation*}
as well as
\begin{equation*}
\limsup_{n\to \infty} \E[(n^{-1} I_n)^k] \le \P(L_k=0)		\quad\text{when}\quad r(n) p_n \to \infty,
\end{equation*}
and the three assertions follow.
\end{proof}

The next proposition offers a reciprocal to Proposition \ref{pro:transition_de_phase_generale}, which shows that \eqref{eq:hypothese1}, $k \in \N$, form a necessary and sufficient condition for the critical case \ref{pro:transition_de_phase_generale_crit}. Observe that, if $D(c)$ is defined as in \eqref{eq:moments_loi_D_infini} for every $c \in (0, \infty)$, then $\lim_{c \to 0+} D(c) = 1$ in probability.

\begin{pro}\label{pro:reciproque_transition_de_phase_generale}
Assume that if $\lim_{n \to \infty} r(n) p_n = c \in (0, \infty)$, then $n^{-1} I_n$ converges in distribution as $n \to \infty$ to a limit $D(c)$ which satisfies and $\lim_{c \to 0+} D(c) = 1$ in probability. Then \eqref{eq:hypothese1} is fulfilled for every $k \in \N$ and the Laplace transform of $L_k$ is given by \eqref{eq:moments_loi_D_infini}.
\end{pro}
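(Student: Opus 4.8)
The plan is to run the computation from the proof of Proposition~\ref{pro:transition_de_phase_generale} backwards: there, control on $L_{n,k}$ was converted into control on the moments of $n^{-1}I_n$, and here we are handed the latter and must recover the former. Fix $k \in \N$ and $c \in (0,\infty)$. Since $r(n) \to \infty$, the number $p_n \coloneqq 1 - \ex^{-c/r(n)}$ lies in $(0,1)$ for all large $n$, and $r(n) p_n = c \cdot \bigl(1 - \ex^{-c/r(n)}\bigr)\big/\bigl(c/r(n)\bigr) \to c$, so the hypothesis applies to this particular choice: $n^{-1} I_n \cvloi D(c)$. As $n^{-1}I_n$ takes values in $[0,1]$, bounded convergence of moments gives $\E[(n^{-1}I_n)^k] \to \E[D(c)^k]$.

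Next I would invoke the identity established in the proof of Proposition~\ref{pro:transition_de_phase_generale}, namely $\E[(n^{-1}I_n)^k] = \E[(1-p_n)^{X_{n,k}}]$, where $X_{n,k}$ is the number of internal nodes of $R_{n,k}$ and $0 \le L_{n,k} - X_{n,k} \le k-1$. With our choice of $p_n$ we have $(1-p_n)^{X_{n,k}} = \ex^{-c X_{n,k}/r(n)}$, and since $|X_{n,k} - L_{n,k}| \le k-1$ while $r(n) \to \infty$, the ratio $\ex^{c(L_{n,k}-X_{n,k})/r(n)}$ is squeezed between $1$ and $\ex^{c(k-1)/r(n)} \to 1$; as $\ex^{-cX_{n,k}/r(n)}$ and $\ex^{-cL_{n,k}/r(n)}$ are both bounded by $1$, it follows that $\E[\ex^{-cL_{n,k}/r(n)}] - \E[\ex^{-cX_{n,k}/r(n)}] \to 0$, whence
\[
\E\bigl[\ex^{-c L_{n,k}/r(n)}\bigr] \cv \E[D(c)^k] \eqqcolon \phi_k(c).
\]
Since $c \in (0,\infty)$ was arbitrary, the Laplace transforms of the laws of $r(n)^{-1}L_{n,k}$ on $[0,\infty)$ converge pointwise on $(0,\infty)$ to $\phi_k$.

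To conclude through the continuity theorem for Laplace transforms I need $\phi_k(0+) = 1$, ensuring that no mass escapes to infinity. This is exactly where the assumption $\lim_{c\to 0+} D(c) = 1$ in probability enters: since $D(c) \in [0,1]$, we get $D(c)^k \to 1$ in probability and boundedly as $c \to 0+$, hence $\phi_k(c) = \E[D(c)^k] \to 1$. Thus $\phi_k$ is continuous at $0$ with value $1$, and the continuity theorem for Laplace transforms yields weak convergence of the laws of $r(n)^{-1}L_{n,k}$ to a probability measure on $[0,\infty)$ with Laplace transform $\phi_k$; equivalently, $r(n)^{-1}L_{n,k} \cvloi L_k$ for some a.s.\ finite non-negative $L_k$ with $\E[\ex^{-cL_k}] = \E[D(c)^k]$, which is precisely~\eqref{eq:hypothese1} together with~\eqref{eq:moments_loi_D_infini}.

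I do not expect a serious obstacle here: the substantive input is just that a $[0,1]$-valued law is determined by its moments, so that the moment convergence we are given is genuinely equivalent to Laplace-transform convergence of $r(n)^{-1}L_{n,k}$. The only points needing (minor) care are the bookkeeping of the bounded gap between $L_{n,k}$ and $X_{n,k}$, and the fact that one is free to apply the hypothesis to a conveniently chosen sequence $p_n$ — harmless, since $L_{n,k}$ is a functional of $T_n$ alone and does not depend on $p_n$.
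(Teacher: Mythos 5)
Your argument is correct and follows the same route as the paper's proof: pass from moment convergence of $n^{-1}I_n$ to Laplace-transform convergence of $r(n)^{-1}L_{n,k}$, then invoke the continuity theorem for Laplace transforms (the paper cites Theorem XIII.1.2 of Feller) using the assumption $\lim_{c\to 0+}D(c)=1$ to supply continuity at $0$. Two small points where your write-up is actually more careful than the paper's: you explicitly exploit the freedom to choose a particular sequence $p_n = 1-\ex^{-c/r(n)}$ satisfying $r(n)p_n\to c$, which makes $(1-p_n)^{X_{n,k}}=\ex^{-cX_{n,k}/r(n)}$ an identity rather than an approximation; and you track the gap $0\le L_{n,k}-X_{n,k}\le k-1$ (the paper's displayed chain of equalities silently replaces $X_{n,k}$ by $L_{n,k}$). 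Neither changes the substance, but your version closes those small steps cleanly.
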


\begin{proof}
Fix $k \in \N$. The convergences $r(n) p_n \to c$ and $n^{-1} I_n \to D(c)$ in distribution imply that for every $c \in (0, \infty)$,
\begin{equation*}
\E[D(c)^k] = \lim_{n \to \infty} \E[(n^{-1} I_n)^k] = \lim_{n \to \infty} \E[(1-p_n)^{L_{n,k}}] = \lim_{n \to \infty} \E[\exp(-c r(n)^{-1} L_{n,k})].
\end{equation*}
Moreover, the assumption $\lim_{c \to 0+} D(c) = 1$ in probability implies that $\E[D(c)^k]$ converges to $1$ as $c \to 0+$. We conclude from Theorem XIII.1.2 of Feller \cite{Feller-An_introduction_to_probability_theory_and_its_applications_Volume_2} that the function $c \mapsto \E[D(c)^k]$ is the Laplace transform of a random variable $L_k$ and $r(n)^{-1} L_{n,k}$ converges in distribution to $L_k$.
\end{proof}

For the critical case $p_n \sim c / r(n)$, under a stronger assumption, we obtain the joint convergence in distribution of $I_n$ and the sizes of the burnt components, all rescaled by $n$. To this end, let us first introduce some notation. A compact rooted measure space is a quadruple $(\T, d, \rho, \mu)$ where $(\T, d)$ is a compact metric space, $\rho \in \T$ a distinguished element called the root of $\T$, and $\mu$ a Borel probability \emph{mass} measure on $\T$. This quadruple is called a (compact rooted) real tree if, furthermore, $\T$ is \emph{tree-like}, in the sense that it is a geodesic space for which any two points are connected via a unique continuous injective path up to re-parameterisation; see e.g. Le Gall \cite{LeGall-Random_trees_and_applications} and references therein for background. For each integer $k$, we denote by $\Red(\T, k)$ the tree $\T$ spanned by its root and $k$ i.i.d. elements chosen according to $\mu$.

Observe that $\Cut(T_n)$, rooted at $[n]$ and equipped with the graph distance and the uniform distribution on leaves, is a random real tree and $R_{n,k} = \Red(\Cut(T_n), k)$. Let $r : \N \to \R$ be some function such that $\lim_{n \to \infty} r(n) = \infty$. We introduce the following hypothesis:
\begin{equation}\label{eq:hypothese2}\tag{$H_k'$}
r(n)^{-1} R_{n,k} \cvloi \Red(\T, k).
\end{equation}
where $\T$ is a random rooted real tree and $r(n)^{-1} R_{n,k}$ has the same tree-structure as $R_{n,k}$ but with distances rescaled by a factor $r(n)^{-1}$. We adopt the framework of Aldous \cite{Aldous-The_continuum_random_tree_3}, viewing the reduced trees as a combinatorial rooted tree structure with edge lengths. Note that the fact that all the \eqref{eq:hypothese2}, $k \in \N$, hold is equivalent to the convergence of $r(n)^{-1} \Cut(T_n)$ to $\T$ for the so-called pointed Gromov--Prokhorov topology, see e.g. Gromov \cite{Gromov-Metric_structures_for_Riemannian_and_non_Riemannian_spaces} and Greven, Pfaffelhuber \& Winter \cite{Greven_Pfaffelhuber_Winter-Convergence_in_distribution_of_random_metric_measure_spaces} for references.

The distance on $\T$ induces an extra \emph{length} measure $\ell$, which is the unique $\sigma$-finite measure assigning measure $d(x,y)$ to the geodesic path between $x$ and $y$. We define on $\T$ the analog of the point process $\varphi_n$ on $\Cut(T_n)$: first sample a Poisson point process with intensity $c \ell(\cdot)$, then, along each branch from the root to a leaf, keep only the closest mark to the root (if any) and erase the other marks. The process $\Phi_c$ induces a partition of $\T$ in which two elements $x, y \in \T$ are connected if and only if there is no atom on the geodesic with extremities $x$ and $y$. Denote by $\# (\T, \Phi_c)$ the sequence given by the $\mu$-masses of each connected component of $\T$ after logging at the atoms of $\Phi_c$, the root-component first, and the next in non-increasing order.

Recall that $I_n$ denotes the total number of fireproof vertices in $T_n$ and let $b_{n,1}^\downarrow \ge b_{n,2}^\downarrow \ge \dots \ge 0$ be the sizes of the burnt subtrees, ranked in non-increasing order.

\begin{pro}\label{pro:limite_foret_ignifugee_et_arbres_brules}
If \eqref{eq:hypothese2} holds for every $k \in \N$, then in the regime $p_n \sim c / r(n)$, the convergence
\begin{equation*}
n^{-1} (I_n, b_{n,1}^\downarrow, b_{n,2}^\downarrow, \ldots) \cvloi \# (\mathcal{T}, \Phi_c)
\end{equation*}
holds in distribution for the $\ell^1$ topology.
\end{pro}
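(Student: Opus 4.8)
The plan is to combine the hypothesis \eqref{eq:hypothese2} with the moment method already used for Proposition \ref{pro:transition_de_phase_generale}, but now carried out jointly for the fireproof proportion and the burnt-component masses, and then to transfer the convergence from the discrete marked cut-trees to the continuum object $(\T, \Phi_c)$. The key observation is that the random vector $n^{-1}(I_n, b_{n,1}^\downarrow, b_{n,2}^\downarrow, \ldots)$ lives in the simplex of nonnegative $\ell^1$-sequences summing to $1$, so it suffices to identify the limit through ``sampling'': if $X_{n,1}, \dots, X_{n,k}$ are i.i.d.\ uniform vertices of $T_n$, independent of the dynamics, then the probability that a prescribed configuration occurs (a given subset of the sampled vertices lies in the root-component, the others are distributed among burnt components in a prescribed pattern of coincidences) is a polynomial functional of $n^{-1}(I_n, b_{n,i}^\downarrow)$, and conversely these sampling probabilities determine the law of the whole sequence. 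Under the coupling with $(\Cut(T_n), \varphi_n)$, each such sampling event translates into an event about the marks of $\varphi_n$ on the reduced tree $R_{n,k} = \Red(\Cut(T_n), k)$: a sampled vertex is fireproof iff no ancestor of its leaf carries a mark, and two sampled vertices lie in the same burnt subtree iff the closest mark to the root on the path to each of the two leaves is the \emph{same} mark, i.e.\ it sits on the common part of the two branches above the last mark before their branchpoint.

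First I would set up this dictionary precisely: fix $k$ and a ``pattern'' describing how the $k$ leaves of $R_{n,k}$ should be grouped (some in the root block, the rest partitioned into burnt blocks respecting the tree structure), and write the probability of this pattern as an integral over the positions of the marks along the edges of $R_{n,k}$ against the Bernoulli$(p_n)$ marking, after the ``keep the closest mark'' reduction. Second, I would pass to the limit: \eqref{eq:hypothese2} gives the convergence in distribution of $r(n)^{-1} R_{n,k}$, as a combinatorial tree with edge lengths, to $\Red(\T, k)$; since $p_n \sim c/r(n)$, the rescaled Bernoulli marking on each edge of length $\asymp r(n)$ converges to a Poisson number of marks with intensity $c$ per unit length, and the ``keep the closest mark'' operation is continuous in this limit. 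Hence the pattern probability for $R_{n,k}$ converges to the corresponding pattern probability for $\Red(\T, k)$ equipped with the point process $\Phi_c$ — and the latter is, by construction of $\# (\T, \Phi_c)$ and the fact that $\Red(\T,k)$ is $\T$ sampled at $k$ i.i.d.\ $\mu$-points, exactly the $k$-sample probability of the limiting $\ell^1$-sequence $\# (\T, \Phi_c)$. Third, since the pattern probabilities (equivalently, all mixed moments $\E[(n^{-1}I_n)^{a_0}\prod_j (n^{-1}b_{n,i_j}^\downarrow)^{a_j}]$, after symmetrisation over which sampled points land in which block) converge and the limiting sequence lies in the compact set $\{x \in \ell^1_{\ge 0} : \sum x_i = 1\}$, the method of moments on the simplex (as in Aldous's work on continuum random trees and mass-partitions) yields $n^{-1}(I_n, b_{n,1}^\downarrow, \ldots) \cvloi \# (\T, \Phi_c)$ in distribution for the $\ell^1$ topology; tightness in $\ell^1$ is automatic here because the total mass is exactly $1$, so no mass escapes to infinitely many vanishing components.

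The main obstacle is the interchange of limits in the second step: one must make sure that the ``keep only the closest mark to the root'' truncation commutes with the scaling limit uniformly enough, and in particular that with high probability no mark falls too close to a branchpoint of $R_{n,k}$ (an event of probability $O(p_n) = O(1/r(n)) \to 0$) so that the discrete grouping pattern of the sampled leaves really does converge to the continuum one, rather than being perturbed by an $o(1)$ but positive-probability boundary effect; one also needs that the number of marks on a branch of length $\asymp r(n)$ is tight, which follows from $r(n) p_n \to c$. A secondary technical point is to verify that the map sending a finite combinatorial tree with edge lengths and a finite marked configuration to the resulting ordered mass-partition of its $k$ leaves (root block first, then decreasing) is a.s.\ continuous at the limit — this can fail only on ties between block sizes or marks at branchpoints, both null events for $\Red(\T,k)$ under $\Phi_c$ when $\T$ is non-atomic along branches. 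Once these continuity-of-the-limit-functional points are settled, the argument is a routine moment computation entirely parallel to the proof of Proposition \ref{pro:transition_de_phase_generale}, now bookkeeping the burnt blocks in addition to the fireproof one.
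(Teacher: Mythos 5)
The paper does not actually spell out a proof of this proposition: it defers to Lemma~2 of the companion paper on Cayley trees. Your reconstruction --- identify the law of the mass partition through $k$-sampling with replacement, translate the sample pattern into a configuration of marks of $\varphi_n$ on the reduced cut-tree $R_{n,k}$, and pass to the limit using $(H_k')$ together with the Bernoulli-to-Poisson rescaling of the marks --- is the right one and extends the moment computation of Proposition~\ref{pro:transition_de_phase_generale} in exactly the way the author's own post-statement remark (the identity $\E[D(c)^k] = \P(U_1, \dots, U_k \in C_c)$) suggests. The continuity caveats you raise (marks near branchpoints, commuting the ``closest-mark'' reduction with the scaling limit) are precisely the ones that need, and admit, routine checks.

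One justification, however, is wrong as stated. You claim tightness in $\ell^1$ is ``automatic because the total mass is exactly $1$, so no mass escapes to infinitely many vanishing components.'' That is precisely the phenomenon (dust) that $\ell^1$-norm $1$ at every finite $n$ does \emph{not} by itself preclude: a sequence in the simplex can have all its sampling patterns converge to those of a defective mass partition, as $(1/n, \dots, 1/n)$ shows. What actually closes the gap is that the limit $\# (\T, \Phi_c)$ itself sums to $1$ almost surely --- the atoms of $\Phi_c$ form a $\mu$-null subset of $\T$, so logging at them loses no $\mu$-mass --- and then, under a Skorokhod coupling, componentwise convergence between two random elements of $\{x : x_i \ge 0, \ \sum_i x_i = 1\}$ upgrades to $\ell^1$ convergence by a Scheff\'e-type argument (on this simplex the product and $\ell^1$ topologies coincide). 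With that replacement your argument is complete.
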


Remark that \eqref{eq:hypothese2} implies \eqref{eq:hypothese1} where $L_k$ is the total length of $\Red(\T, k)$; further the first element of $\# (\T, \Phi_c)$ is the variable $D(c)$ of Proposition \ref{pro:transition_de_phase_generale} and we can interpret identity \eqref{eq:moments_loi_D_infini} using $\T$. Sample $U_1, \dots, U_k \in \T$ independently according to $\mu$ and denote by $\Red(\T, k)$ the associated reduced tree. Denote also by $C_c$ the root-component of $\T$ after logging at the atoms of $\Phi_c$. Then  $D(c) = \mu(C_c)$ and
\begin{equation*}
\E[D(c)^k] = \E[\mu(C_c)^k] = \P(U_1,\dots, U_k \in C_c).
\end{equation*}
Since $U_1,\dots, U_k$ belong to $C_c$ if and only if there is no atom of the Poisson random measure on $\Red(\T, k)$, we also have
\begin{equation*}
\P(U_1,\dots, U_k \in C_c)
= \E[\exp(-c \ell(\Red(\T, k)))]
= \E[\exp(-c L_k)].
\end{equation*}

In the case where $T_n$ is a Cayley tree of size $n$, Bertoin \cite{Bertoin-Fires_on_trees} proved that the assumptions \eqref{eq:hypothese2}, $k \in \N$, hold with $r(n) = n^{1/2}$ and $\T$ the Brownian Continuum Random Tree. Moreover, a more explicit expression of the limit in Proposition \ref{pro:limite_foret_ignifugee_et_arbres_brules} in that case can be found in \cite{Marzouk-Fires_large_cayley_trees}. The proof of Proposition \ref{pro:limite_foret_ignifugee_et_arbres_brules} is essentially that of Lemma 2 in \cite{Marzouk-Fires_large_cayley_trees} to which we refer.

%%%%%%%%%%%%%%%%%%%%%%%%%%%%%%%%%%%%%%%%%%%%%%%%%%%%%%
%%%%%%%%%%%%%%%%%%%%%%%%%%% SECTION 2.2 %%%%%%%%%%%%%%%%%%%%
%%%%%%%%%%%%%%%%%%%%%%%%%%%%%%%%%%%%%%%%%%%%%%%%%%%%%%
\subsection{The case of random recursive trees}\label{section22}

In the rest of this paper, we only consider the case where the trees are random recursive trees of size $n$ denoted by $\tn$. In this setting, Kuba \& Panholzer \cite[Theorem 3]{Kuba_Panholzer-Multiple_isolation_of_nodes_in_recursive_trees}, provide the assumptions \eqref{eq:hypothese1}, $k \in \N$:
\begin{equation}\label{eq:convergence_longueur_cut_tree_reduit_RRT}
\frac{\ln n}{n} L_{n,k} \cvloi \beta_k,
\end{equation}
where $\beta_k$ is a beta$(k, 1)$ random variable, i.e. with distribution $kx^{k-1} \d x$ on $[0, 1]$. Their result is stated in terms of the number of cuts needed to isolate uniform random vertices. Bertoin \cite{Bertoin-The_cut_tree_of_large_recursive_trees} proved an even stronger convergence than \eqref{eq:hypothese2} for every $k \in \N$. Indeed, the proof of Theorem 1 in \cite{Bertoin-The_cut_tree_of_large_recursive_trees} shows the following.

\begin{lem}[\cite{Bertoin-The_cut_tree_of_large_recursive_trees}]\label{lem:convergence_cut_tree_GHP}
Consider $\Cut(\tn)$ equipped with the metric $d_n$ given by the graph distance rescaled by a factor $\ln n / n$, and the uniform probability measure $\mu_n$ on the $n$ leaves. Then
\begin{equation*}
(\Cut(\tn), \{[n], \{1\}\}, d_n, \mu_n) \cv ([0, 1], \{0, 1\}, | \cdot |, \Leb)
\quad\text{in probability}
\end{equation*}
for the two-pointed Gromov--Hausdorff--Prokhorov topology, where $| \cdot |$ and $\Leb$ refer respectively to the Euclidean distance and Lebesgue measure.
\end{lem}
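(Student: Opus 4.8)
The plan is to treat Theorem~1 of \cite{Bertoin-The_cut_tree_of_large_recursive_trees} (more precisely, its proof) as a black box for the metric--measure part, and to supply only the bookkeeping of the second distinguished point. That result already yields the pointed convergence
\begin{equation*}
(\Cut(\tn), \{[n]\}, d_n, \mu_n) \cv ([0,1], \{0\}, |\cdot|, \Leb) \qquad \text{in probability}
\end{equation*}
for the one-pointed Gromov--Hausdorff--Prokhorov topology, with the root block $[n]$ sent to the endpoint $0$; should one only extract from \cite{Bertoin-The_cut_tree_of_large_recursive_trees} the unpointed convergence of the rescaled spaces, the assertion ``$[n] \mapsto 0$'' can be recovered from \eqref{eq:convergence_longueur_cut_tree_reduit_RRT}, since the $d_n$-distance from $[n]$ to a $\mu_n$-uniform leaf then converges in law to a uniform variable on $[0,1]$, which forces $[n]$ to sit at an endpoint (and the $k$-point versions pin it down). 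Thus the only thing left to prove is that the leaf $\{1\}$ lands at the other endpoint~$1$.

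The first step is the elementary identity $d_n([n], \{1\}) = \tfrac{\ln n}{n} X(\tn; 1)$, where $X(\tn; 1)$ denotes the number of cuts needed to isolate the vertex~$1$: when $\Cut(\tn)$ is built from a uniform random ordering of the edges of $\tn$, the edges that split the block currently containing~$1$ are exactly the edges removed by the isolation procedure for~$1$, and they are in bijection with the edges of the branch from $[n]$ to $\{1\}$ in $\Cut(\tn)$. It then suffices to invoke the first two moments of $X(\tn;1)$ computed by Meir \& Moon \cite{Meir_Moon-Cutting_down_recursive_trees}, or the sharper limit law of Iksanov \& M\"{o}hle \cite{Iksanov_Moehle-A_probabilistic_proof_of_a_weak_limit_law_for_the_number_of_cuts_needed_to_isolate_the_root_of_a_random_recursive_tree}, which give $\tfrac{\ln n}{n} X(\tn;1) \to 1$ in probability, i.e.\ $d_n([n], \{1\}) \to 1$ in probability.

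The last step is a routine correspondence-extension argument. On an event of probability tending to~$1$ one has a closed correspondence $\mathcal{R}_n$ between $\Cut(\tn)$ and $[0,1]$ with $([n], 0) \in \mathcal{R}_n$ and distortion at most $\delta_n$, and a coupling $\pi_n$ of $\mu_n$ and $\Leb$ with $\pi_n(\mathcal{R}_n^{\,c}) \le \delta_n$, for some $\delta_n \to 0$, and moreover $|d_n([n],\{1\}) - 1| \le \delta_n$ there. If $(\{1\}, y_n) \in \mathcal{R}_n$, applying the distortion bound to the pairs $([n],0)$ and $(\{1\},y_n)$ gives $|y_n - d_n([n],\{1\})| \le \delta_n$, hence $|y_n - 1| \le 2\delta_n$. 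Replacing the partner of $\{1\}$ by the endpoint, i.e.\ setting $\mathcal{R}_n' = \mathcal{R}_n \cup \{(\{1\},1)\}$, keeps a closed correspondence containing both $([n],0)$ and $(\{1\},1)$, increases the distortion by at most $2|y_n-1| \le 4\delta_n$, and leaves $\pi_n$ a valid coupling with the same defect; so $(\mathcal{R}_n', \pi_n)$ witnesses that the two-pointed Gromov--Hausdorff--Prokhorov distance is $O(\delta_n)$, which is the claim. The only genuine obstacle is thus the input from Bertoin; the remaining care is the purely probabilistic bookkeeping of the ``in probability'' statements (one may pass to a subsequence and argue almost surely if preferred) and the identification of the precise pointed form of his theorem.
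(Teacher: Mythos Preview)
The paper does not actually prove this lemma; it simply attributes the statement to the proof of Theorem~1 in \cite{Bertoin-The_cut_tree_of_large_recursive_trees} and moves on. So there is no ``paper's own proof'' to compare against in any substantive sense.

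Your argument is correct and is a clean way to make the citation self-contained. You take the one-pointed Gromov--Hausdorff--Prokhorov convergence from Bertoin as input, identify $d_n([n],\{1\})$ with $\tfrac{\ln n}{n}\,\zeta(n)$ (this is exactly the quantity the paper discusses right after Lemma~\ref{lem:proprietes_marche_aleatoire}), use the Meir--Moon/Iksanov--M\"ohle estimate to get $d_n([n],\{1\})\to 1$ in probability, and then patch the correspondence by adjoining $(\{1\},1)$. The distortion bookkeeping is fine; the increase is at most $|y_n-1|$ rather than $2|y_n-1|$, but either way it is $O(\delta_n)$. The only genuine input is Bertoin's theorem, which is precisely what the paper invokes as well, so your write-up and the paper's treatment are aligned --- yours is just more explicit about the second marked point.
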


Propositions \ref{pro:transition_de_phase_generale} and \ref{pro:limite_foret_ignifugee_et_arbres_brules} then read as follows for recursive trees.

\begin{thm}\label{thm:transition_de_phase_arbres_recursifs}
We have for fires on $\tn$:
\begin{enumerate}[ref=\thethm(\roman{enumi})]
\item\label{thm:transition_de_phase_arbres_recursifs_sup} If $\lim_{n \to \infty} n p_n/\ln n = 0$, then $\lim_{n \to \infty} n^{-1} I_n = 1$ in probability.

\item\label{thm:transition_de_phase_arbres_recursifs_sub} If $\lim_{n \to \infty} n p_n/\ln n = \infty$, then $\lim_{n \to \infty} n^{-1} I_n = 0$ in probability.

\item\label{thm:transition_de_phase_arbres_recursifs_crit} If $\lim_{n \to \infty} n p_n/\ln n = c \in (0,\infty)$, then the convergence in distribution
\begin{equation*}
n^{-1} (I_n, b_{n,1}^\downarrow, b_{n,2}^\downarrow, \dots) \cvloi (\e_c \wedge 1, 1 - (\e_c \wedge 1), 0, 0, \dots)
\end{equation*}
holds for the $\ell^1$ topology, where $\e_c$ is an exponential random variable with rate $c$.
\end{enumerate}
\end{thm}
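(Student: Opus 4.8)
The plan is to read off each of the three regimes from the general results of Section~\ref{section21}, feeding in the two facts available for recursive trees: the convergence \eqref{eq:convergence_longueur_cut_tree_reduit_RRT} of Kuba \& Panholzer and the Gromov--Hausdorff--Prokhorov convergence of Lemma~\ref{lem:convergence_cut_tree_GHP} of Bertoin. Throughout we take $r(n) = n / \ln n$, so that the three hypotheses $\lim_{n \to \infty} n p_n / \ln n \in \{0, \infty, c\}$ become exactly $\lim_{n \to \infty} r(n) p_n \in \{0, \infty, c\}$.

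The first two parts require nothing beyond quoting Proposition~\ref{pro:transition_de_phase_generale}. Hypothesis $(H_1)$ is the case $k = 1$ of \eqref{eq:convergence_longueur_cut_tree_reduit_RRT}, where $\beta_1$ is uniform on $[0,1]$; in particular $\P(L_1 = 0) = \P(\beta_1 = 0) = 0$, so $L_1 > 0$ almost surely. Hence Proposition~\ref{pro:transition_de_phase_generale}\ref{pro:transition_de_phase_generale_sur} gives \ref{thm:transition_de_phase_arbres_recursifs_sup} and Proposition~\ref{pro:transition_de_phase_generale}\ref{pro:transition_de_phase_generale_sous} gives \ref{thm:transition_de_phase_arbres_recursifs_sub}.

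The content is in the critical case \ref{thm:transition_de_phase_arbres_recursifs_crit}, which I would obtain from Proposition~\ref{pro:limite_foret_ignifugee_et_arbres_brules}, whose hypothesis is the family \eqref{eq:hypothese2}, $k \in \N$. By Lemma~\ref{lem:convergence_cut_tree_GHP}, $r(n)^{-1} \Cut(\tn)$ equipped with $\mu_n$ and rooted at $[n]$ converges in probability, for the pointed GHP topology, to $([0,1], |\cdot|, \Leb)$ rooted at the image of $[n]$, which is one of the two endpoints and which we may take to be $0$ after applying the isometry $x \mapsto 1 - x$ (the second marked point $\{1\}$ of the lemma is not needed here). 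Since the limit is deterministic this is a convergence in distribution, and pointed GHP convergence implies pointed Gromov--Prokhorov convergence, which as recalled in Section~\ref{section21} is precisely \eqref{eq:hypothese2} for every $k$ with $\T = [0,1]$ rooted at $0$ and mass measure $\Leb$. Proposition~\ref{pro:limite_foret_ignifugee_et_arbres_brules} then gives $n^{-1} (I_n, b_{n,1}^\downarrow, b_{n,2}^\downarrow, \dots) \cvloi \#(\T, \Phi_c)$ for the $\ell^1$ topology, and it remains to identify $\#(\T, \Phi_c)$. On $\T = [0,1]$ the length measure equals $\Leb$, so $\Phi_c$ is obtained from a Poisson process of rate $c$ on $[0,1]$ by keeping, on each geodesic $[0,x]$ from the root, only its leftmost atom; hence the only surviving atom is the global leftmost atom $M$ of the Poisson process (none survives when the process is empty, an event of probability $\ex^{-c}$). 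Logging $[0,1]$ at $M$ produces the root component $[0, M)$ and the single other component $[M, 1]$, so $\#(\T, \Phi_c) = (M \wedge 1,\, 1 - M \wedge 1,\, 0, 0, \dots)$ with the convention $M = +\infty$ when there is no atom, and since $\P(M > t) = \ex^{-ct}$ for $t \in [0,1]$ we have $M \wedge 1 \overset{d}{=} \e_c \wedge 1$ with $\e_c$ exponential of rate $c$. This is the announced limit; as a consistency check, the first coordinate of $\#(\T, \Phi_c)$ is the variable $D(c)$ of Proposition~\ref{pro:transition_de_phase_generale}, and $\E[D(c)^k] = \E[\exp(-c\beta_k)] = \int_0^1 k x^{k-1} \ex^{-cx}\, \d x = \E[(\e_c \wedge 1)^k]$, the last equality by one integration by parts, which determines the law of the bounded variable $D(c)$.

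The main thing to be careful about is the transfer step in the critical case: one must verify that the in-probability pointed GHP convergence of Lemma~\ref{lem:convergence_cut_tree_GHP} does yield the pointed Gromov--Prokhorov convergence of the reduced trees $R_{n,k} = \Red(\Cut(\tn), k)$ underlying \eqref{eq:hypothese2} --- in particular that the cut-tree root $[n]$ is the point sent to the endpoint $0$, and that adjoining $k$ further $\mu_n$-sampled leaves is consistent with the limit $\Red([0,1], k)$ --- and then that running the erasing rule of $\Phi_c$ on the segment $[0,1]$ really leaves exactly one burnt component. Everything else is a direct application of the results of Section~\ref{section21}.
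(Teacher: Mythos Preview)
Your proposal is correct and follows essentially the same route as the paper: apply Proposition~\ref{pro:transition_de_phase_generale} with $r(n)=n/\ln n$ and $L_1=\beta_1$ for \ref{thm:transition_de_phase_arbres_recursifs_sup} and \ref{thm:transition_de_phase_arbres_recursifs_sub}, then invoke Lemma~\ref{lem:convergence_cut_tree_GHP} to feed Proposition~\ref{pro:limite_foret_ignifugee_et_arbres_brules} with $\T=[0,1]$ and identify $\#(\T,\Phi_c)$ as $(\e_c\wedge1,\,1-(\e_c\wedge1),0,\dots)$. The only cosmetic difference is that the paper first derives $n^{-1}I_n\cvloi\e_c\wedge1$ via the moment identity $\E[\ex^{-c\beta_k}]=\E[(\e_c\wedge1)^k]$ and Proposition~\ref{pro:transition_de_phase_generale}\ref{pro:transition_de_phase_generale_crit}, whereas you obtain it as the first coordinate of Proposition~\ref{pro:limite_foret_ignifugee_et_arbres_brules} and record the moment computation only as a check.
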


\begin{proof}
It follows from the convergences \eqref{eq:convergence_longueur_cut_tree_reduit_RRT} and Proposition \ref{pro:transition_de_phase_generale}, that $n^{-1} I_n$ converges in probability to $1$ when $p_n \ll \ln n / n$ and to $0$ when $p_n \gg \ln n / n$, and converges in distribution to $\e_c \wedge 1$ when $p_n \sim c \ln n / n$, since, for every $k \ge 1$, we have
\begin{equation*}
\E[\ex^{-c \beta_k}]
= \int_0^1 \ex^{-c x} k x^{k-1} \d x
= \ex^{-c} + \int_0^1 c \ex^{-c x} x^k \d x
= \E[(\e_c \wedge 1)^k].
\end{equation*}
Further, using Lemma \ref{lem:convergence_cut_tree_GHP}, the mark process $\Phi_c$ on $\T = [0,1]$ reduces to the point process with at most one mark, given by the smallest atom of a Poisson random measure on $[0, 1]$ with intensity $c \times \Leb$ if it exists, and no mark otherwise. The limit $\# (\T, \Phi_c)$ of Proposition \ref{pro:limite_foret_ignifugee_et_arbres_brules} is thus $(\e_c \wedge 1, 1 - (\e_c \wedge 1), 0, 0, \dots)$.
\end{proof}

Consider the critical regime of the fire dynamics on $\tn$, that is $p_n \sim c \ln n / n$ for some fixed $c \in (0, \infty)$. We identify the macroscopic burnt component of Theorem \ref{thm:transition_de_phase_arbres_recursifs_crit} above as the one containing the root. Denote by $b_n^0$ the size of the burnt subtree which contains the root of $\tn$ if the latter is burnt and $0$ otherwise, and $A_n$ the event that the root of $\tn$ burns.

\begin{pro}\label{pro:composante_brulee_macro_contient_la_racine}
In the critical regime $p_n \sim c \ln n / n$, we have
\begin{equation*}
(\mathbf{1}_{A_n}, n^{-1} I_n, n^{-1} b_n^0) \cvloi (\mathbf{1}_{\e_c < 1}, \e_c \wedge 1, 1 - (\e_c \wedge 1)),
\end{equation*}
where $\e_c$ is an exponential random variable with rate $c$.
\end{pro}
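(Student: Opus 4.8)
The plan is to identify the joint law by the method of moments, working in the marked cut-tree $(\Cut(\tn),\varphi_n)$ coupled with the fire dynamics. Recall that the burnt subtrees of $\tn$ correspond to the marked blocks of $\Cut(\tn)$, that the leaf $\{1\}$ is the root of $\tn$, that $A_n$ is the event that $\{1\}$ has a marked ancestor, and that on $A_n$ the quantity $b_n^0$ is the number of leaves below the closest-to-the-root marked ancestor of $\{1\}$. As the limit is a bounded random vector valued in $\{0,1\}\times[0,1]^2$, it suffices to show, for all integers $j,k\ge0$, that $\E[(n^{-1}I_n)^j(n^{-1}b_n^0)^k]\to\E[(\e_c\wedge1)^j(1-(\e_c\wedge1))^k]$ and $\E[\mathbf{1}_{A_n}(n^{-1}I_n)^j(n^{-1}b_n^0)^k]\to\E[\mathbf{1}_{\e_c<1}(\e_c\wedge1)^j(1-(\e_c\wedge1))^k]$. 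When $k=0$ the first convergence is contained in Theorem~\ref{thm:transition_de_phase_arbres_recursifs_crit}; when $k\ge1$ one has $b_n^0=0$ off $A_n$ and $(1-(\e_c\wedge1))^k=0$ on $\{\e_c\ge1\}$, so both displays reduce to the second one. Hence everything comes down to establishing the second convergence for all $j,k\ge0$.

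To that end I would introduce $j+k$ i.i.d.\ uniform vertices $U_1,\dots,U_{j+k}$ of $\tn$, independent of the tree and the dynamics. Since, conditionally on the tree and the dynamics, $n^{-1}I_n$ is the probability that a uniform vertex is fireproof and $n^{-1}b_n^0$ the probability that a uniform vertex lies in the burnt subtree containing the root (a subtree which is empty off $A_n$), the second expectation equals $\P(E_{j,k})$, where $E_{j,k}$ is the event that the root burns, that $U_1,\dots,U_j$ are fireproof, and that $U_{j+1},\dots,U_{j+k}$ all belong to the burnt subtree containing the root (the last condition being vacuous when $k=0$). The crucial step is to translate $E_{j,k}$, via the coupling, into a condition on the first-step marks in the construction of $\varphi_n$ (mark each internal block independently with probability $p_n$, then keep along each branch only the closest mark to the root): writing $W:=\{1\}\wedge\{U_{j+1}\}\wedge\dots\wedge\{U_{j+k}\}$ for the most recent common ancestor of these leaves in $\Cut(\tn)$, with $W:=\{1\}$ if $k=0$, one should check that $E_{j,k}$ occurs if and only if, in the first step, (a)~no internal ancestor of any of $\{U_1\},\dots,\{U_j\}$ is marked, while (b)~at least one internal node on the branch from the root $[n]$ to $W$ that is not an ancestor of any $\{U_i\}$, $i\le j$, is marked. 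Denoting by $S_1$ and $S_2$ these two disjoint sets of internal nodes, and using that the first-step marks are independent given $\Cut(\tn)$ and $U_1,\dots,U_{j+k}$, this gives $\P(E_{j,k}\mid\Cut(\tn),U_\bullet)=(1-p_n)^{|S_1|}\bigl(1-(1-p_n)^{|S_2|}\bigr)$.

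It then remains to pass to the limit. Adding to Lemma~\ref{lem:convergence_cut_tree_GHP} the leaves $\{U_1\},\dots,\{U_{j+k}\}$, sampled from $\mu_n$, the correspondingly pointed cut-trees rescaled by $(\ln n)/n$ converge in distribution to the segment $[0,1]$ marked at $0$, $1$ and $j+k$ further i.i.d.\ uniform points $V_1,\dots,V_{j+k}$, whence all rescaled distances between the marked points and the most recent common ancestors of subsets of them converge jointly. Since $|S_1|$ and $|S_2|$ coincide, up to bounded additive errors, with such distances, one obtains that $\frac{\ln n}{n}(|S_1|,|S_2|)$ converges in distribution to $(M_j,(V^{(2)}-M_j)^+)$, where $M_j:=\max(V_1,\dots,V_j)$ (with $M_0:=0$) and $V^{(2)}:=\min(1,V_{j+1},\dots,V_{j+k})$ (so $V^{(2)}=1$ when $k=0$): indeed $\frac{\ln n}{n}|S_1|$ tends to the length of the subtree spanned by $0,V_1,\dots,V_j$, which is $M_j$, and $\frac{\ln n}{n}|S_2|$ to the length of $[0,V^{(2)}]$ lying outside $\bigcup_{i\le j}[0,V_i]=[0,M_j]$, namely $(V^{(2)}-M_j)^+$. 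Because $p_n\sim c\ln n/n$, the bounded variables $(1-p_n)^{|S_1|}$ and $(1-p_n)^{|S_2|}$ converge in distribution to $\ex^{-cM_j}$ and $\ex^{-c(V^{(2)}-M_j)^+}$, so by dominated convergence $\P(E_{j,k})\to\E\bigl[\ex^{-cM_j}\bigl(1-\ex^{-c(V^{(2)}-M_j)^+}\bigr)\bigr]$. Finally, letting $\e_c$ be an exponential variable with rate $c$ independent of the $V_i$'s, one has $\ex^{-cM_j}(1-\ex^{-c(V^{(2)}-M_j)^+})=\mathbf{1}_{M_j<V^{(2)}}(\ex^{-cM_j}-\ex^{-cV^{(2)}})$, whose expectation is $\P(M_j<\e_c<V^{(2)})$; as $V^{(2)}\le1$, conditioning on $\e_c$ rewrites this probability as $\E[\mathbf{1}_{\e_c<1}(\e_c\wedge1)^j(1-(\e_c\wedge1))^k]$, which is exactly the target moment.

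The main obstacle will be step~(b): reading off correctly, from the cut-tree coupling, the configuration of first-step marks that encodes the joint event ``the root burns, $U_1,\dots,U_j$ remain fireproof, and $U_{j+1},\dots,U_{j+k}$ fall into the root's burnt subtree'' --- this requires careful bookkeeping of which internal nodes are ancestors of the first group of vertices versus which lie on the branch to $W$, together with the location of the closest-to-the-root marked ancestor of $\{1\}$. Once the event is described correctly, the passage to the limit is routine, relying only on Lemma~\ref{lem:convergence_cut_tree_GHP} and on the continuity under pointed Gromov--Hausdorff--Prokhorov convergence of the functionals (distances to most recent common ancestors) that occur.
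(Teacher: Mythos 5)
Your proof is correct but takes a genuinely different route from the paper's. The paper's argument is a short two-step one: first, since on the branch of $\Cut(\tn)$ from $[n]$ to $\{1\}$ there is at most one mark, located at geometric height, and $b_n^0$ is the number of leaves below it, Lemma~\ref{lem:convergence_cut_tree_GHP} immediately yields the joint convergence of $(\mathbf{1}_{A_n},n^{-1}b_n^0)$ to $(\mathbf{1}_{\e_c<1},1-(\e_c\wedge1))$; second, the known marginal convergence of $n^{-1}I_n$ to $\e_c'\wedge1$ from Theorem~\ref{thm:transition_de_phase_arbres_recursifs_crit} is combined with the deterministic inequality $I_n\le n-b_n^0$: any subsequential joint limit $(\mathbf{1}_{\e_c<1},X,1-(\e_c\wedge1))$ must satisfy $X\le\e_c\wedge1$ almost surely while $X$ and $\e_c\wedge1$ have the same law, forcing $X=\e_c\wedge1$ almost surely. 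You instead prove the joint convergence by the method of moments, introducing $j+k$ auxiliary uniform vertices and decoding the mixed moment as $\P(E_{j,k})$ in terms of the first-step marks of $\varphi_n$ on the reduced cut-tree spanned by $[n]$, $\{1\}$, $\{U_1\},\dots,\{U_{j+k}\}$. Your key translation in step~(b) is correct: the closest-to-root first-step mark on the $[n]$-to-$W$ path is automatically the closest-to-root $\varphi_n$-mark on the whole $[n]$-to-$\{1\}$ branch (by the tree order), so the disjointness of $S_1$ and $S_2$ gives exactly $\P(E_{j,k}\mid\Cut(\tn),U_\bullet)=(1-p_n)^{|S_1|}(1-(1-p_n)^{|S_2|})$, and your limit computation culminating in $\P(M_j<\e_c<V^{(2)})=\E[\mathbf{1}_{\e_c<1}(\e_c\wedge1)^j(1-(\e_c\wedge1))^k]$ matches the target. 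Your approach is heavier but more systematic: it characterizes the full joint law directly, without the monotonicity-plus-equality-in-law trick, at the cost of invoking the multi-pointed form of Lemma~\ref{lem:convergence_cut_tree_GHP} rather than only the $[n]$-to-$\{1\}$ branch together with a pre-established marginal.
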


We see that the probability that the root burns converges to $1-\ex^{-c}$. Further, from this result and Theorem \ref{thm:transition_de_phase_arbres_recursifs_crit}, we see that in the critical regime, with high probability, when the root burns, its burnt component is the macroscopic one, and when it does not, there is no macroscopic burnt component. Finally, the density of fireproof vertices converges to $1$ in probability if we condition the root to be fireproof, and it converges in distribution to an exponential random variable with rate $c$ conditioned to be smaller than $1$ if we condition the root to burn.

\begin{proof}
On the path of $\Cut(\tn)$ from $[n]$ to $\{1\}$, there is at most one mark, at a height given by a geometric random variable with parameter $p_n \sim c \ln n / n$ if the latter is smaller than the height of $\{1\}$, and no mark otherwise. Furthermore, $b_n^0$ is equal to $0$ if there is no such mark and is given by the number of leaves of the subtree of $\Cut(\tn)$ that stems from this marked block otherwise. Thanks to Lemma \ref{lem:convergence_cut_tree_GHP}, this path and the mark converge in distribution to the interval $[0,1]$ with at most one mark, at distance $\e_c$ from $0$ if $\e_c < 1$, and no mark otherwise. Hence
\begin{equation*}
(\mathbf{1}_{A_n}, n^{-1} b_n^0) \cvloi (\mathbf{1}_{\e_c < 1}, 1 - (\e_c \wedge 1)).
\end{equation*}
Moreover, we already know from Theorem \ref{thm:transition_de_phase_arbres_recursifs_crit} that $n^{-1} I_n$ converges in distribution to $\e_c' \wedge 1$, where $\e_c'$ is exponentially distributed with rate $c$. Notice that we have $I_n \le n - b_n^0$, so $\e_c' \wedge 1 \le \e_c \wedge 1$ almost surely. Since they have the same law, we conclude that $\e_c' \wedge 1 = \e_c \wedge 1$ almost surely and the claim follows.
\end{proof}

In order to obtain more precise results on the fire dynamics on $\tn$, we need more information about $\Cut(\tn)$. We next recall some known results about the cut-tree of large random recursive trees, due to Meir and Moon \cite{Meir_Moon-Cutting_down_recursive_trees}, Iksanov and M{\"o}hle \cite{Iksanov_Moehle-A_probabilistic_proof_of_a_weak_limit_law_for_the_number_of_cuts_needed_to_isolate_the_root_of_a_random_recursive_tree},  and Bertoin \cite{Bertoin-The_cut_tree_of_large_recursive_trees}, and introduce the notation we shall use subsequently.

%%%%%%%%%%%%%%%%%%%%%%%%%%%%%%%%%%%%%%%%%%%%%%%%%%%%%
%%%%%%%%%%%%%%%%%%%%%%%%%% SECTION 2.3 %%%%%%%%%%%%%%%%%%%%
%%%%%%%%%%%%%%%%%%%%%%%%%%%%%%%%%%%%%%%%%%%%%%%%%%%%%
\subsection{The cut-tree of a random recursive tree}\label{section23}

Let $\zeta(n)$ be the length of the path in $\Cut(\tn)$ from its root $[n]$ to the leaf $\{1\}$. Set $C_{n, 0} \coloneqq [n]$ and for each $i = 1, \dots, \zeta(n)$, let $C_{n, i}$ and $C_{n, i}'$ be the two offsprings of $C_{n, i-1}$, with the convention that $1 \in C_{n, i}$; finally, denote by $\t_{n, i}$ and $\t_{n, i}'$ the subtrees of $\tn$ restricted to $C_{n, i}$ and $C_{n, i}'$ respectively. Note that for every $i \in \{1, \dots, \zeta(n)\}$, the collection $\{C_{n,1}', \dots, C_{n,i}', C_{n,i}\}$ forms a partition of $[n]$. The next lemma shows that the law of $\Cut(\tn)$ is essentially determined by that of the nested sequence $[n] = C_{n, 0} \supset C_{n,1} \supset \dots \supset C_{n, \zeta(n)} = \{1\}$.

Indeed, random recursive trees fulfill a certain consistency relation called splitting property or randomness preservation property. We extend the definition of a recursive tree to a tree on a totally ordered set of vertices which is rooted at the smallest element and such that the sequence of vertices along any branch from the root to a leaf is increasing. There is a canonical way to transform such a tree with size, say, $k$, to a recursive tree on $[k]$ by relabelling the vertices.

\begin{lem}\label{lem:propriete_fractale}
Fix $i \in \{1, \dots, \zeta(n)\}$. Conditional on the sets $C_{n, 1}', \dots, C_{n, i}'$ and $C_{n, i}$, the subtrees $\t_{n, 1}', \dots, \t_{n, i}'$ and $\t_{n, i}$ are independent random recursive trees on their respective set of vertices. Furthermore, conditional on these sets, the subtrees of $\Cut(\tn)$ that stem from these blocks are independent and distributed as cut-trees of random recursive trees on these sets.
\end{lem}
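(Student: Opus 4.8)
The plan is to reduce everything to the one-step splitting property of random recursive trees and then iterate along the branch from $[n]$ to $\{1\}$ of $\Cut(\tn)$. The first edge removed in the construction of $\Cut(\tn)$ is a uniform random edge of $\tn$, and it produces exactly the two subtrees $\t_{n,1}$ (containing $1$) and $\t_{n,1}'$, on the vertex sets $C_{n,1}$ and $C_{n,1}'$. So the core statement to establish is the \emph{one-step property}: conditionally on $(C_{n,1}, C_{n,1}')$, the trees $\t_{n,1}$ and $\t_{n,1}'$ are independent random recursive trees on their respective vertex sets (in the extended sense, each vertex set being totally ordered by the order inherited from $[n]$).

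To prove this I would count. Fix a partition $[n] = A \sqcup B$ with $1 \in A$, and put $b_0 = \min B$. Recall that a recursive tree on a finite totally ordered set is rooted at its least element with increasing branches, so that every vertex carries a smaller label than all of its descendants. It follows that a pair consisting of a recursive tree $t$ on $[n]$ and an edge $e$ of $t$ whose removal yields components with vertex sets $A$ (the one containing $1$) and $B$ is the same thing as a triple $(t_A, t_B, u)$, where $t_A$ is a recursive tree on $A$, $t_B$ is a recursive tree on $B$ — necessarily rooted at $b_0$ — and $u \in A$ is the $A$-endpoint of $e$, which is forced to satisfy $u < b_0$ and makes $e = \{u, b_0\}$. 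For a fixed admissible pair $(t_A, t_B)$ the number of such $(t,e)$ equals $\#\{u \in A : u < b_0\}$, which does not depend on $(t_A, t_B)$. Since the pair formed by $\tn$ and its first removed edge is uniform over all (recursive tree, edge) pairs, conditioning on the event $\{C_{n,1}' = B\}$ makes $(\t_{n,1}, \t_{n,1}')$ uniformly distributed over the product of the set of recursive trees on $A$ and the set of recursive trees on $B$, that is, makes $\t_{n,1}$ and $\t_{n,1}'$ independent random recursive trees; this is exactly the one-step property.

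Passing to cut-trees and to general $i$ is then an induction. A uniform random order of the edges of $\tn$, with its first element fixed, restricts to two independent uniform random orders of the edges of $\t_{n,1}$ and of $\t_{n,1}'$; hence the subtree of $\Cut(\tn)$ stemming from $C_{n,1}$ is $\Cut(\t_{n,1})$, the one stemming from $C_{n,1}'$ is $\Cut(\t_{n,1}')$, and by the one-step property these are, conditionally on $(C_{n,1}, C_{n,1}')$, independent cut-trees of independent random recursive trees, which settles the case $i = 1$. For $i \ge 2$, I would reveal the removed edges one by one up to but excluding the first removal taking place inside $\t_{n,i-1}$. At that instant $\t_{n,i-1}$ is, by iterating the one-step property, a random recursive tree on $C_{n,i-1}$, conditionally independent of $\t_{n,1}', \dots, \t_{n,i-1}'$ and of the associated cut-subtrees, and none of its own edges has yet been removed; moreover $C_{n,i}, C_{n,i}', \t_{n,i}, \t_{n,i}'$ arise from $\t_{n,i-1}$ through its own first cut exactly as $C_{n,1}, C_{n,1}', \t_{n,1}, \t_{n,1}'$ arise from $\tn$. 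Applying the case $i=1$ to $\t_{n,i-1}$ and combining with this conditional independence gives the statement for $i$.

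The only point requiring genuine care is the bookkeeping of the conditionings: one must verify that at each step the current block containing $1$ carries a fresh uniform recursive tree together with an independent uniform order of removal of its own edges, independently of all the material already split off. This follows from the one-step property and the elementary observation that a uniform random order of a finite set, restricted to a subset (respectively, to two complementary subsets), is again a uniform random order (respectively, a pair of independent uniform random orders); with these at hand the induction is routine and I do not anticipate any real obstacle.
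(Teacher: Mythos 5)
Your proof is correct and follows the same overall route as the paper's: establish a one-step splitting property, observe that a uniform random removal order on the edges of $\tn$ restricts to independent uniform orders on the edges of the two pieces, and then iterate along the branch of $\Cut(\tn)$ from $[n]$ to $\{1\}$. The only stylistic difference is that you prove the one-step property by a bijective count over pairs (recursive tree, edge with prescribed split), identified with triples $(t_A, t_B, u)$ with $u < b_0$, whereas the paper reads the same factorization directly off the vertex-by-vertex attachment construction; both are valid, and your counting version simply makes explicit why the conditional law is a product of uniforms.
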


\begin{proof}
The first statement should be plain from the inductive construction of random recursive trees described in the introduction. The second follows since, in addition, if we restrict the fragmentation of $\tn$ described earlier to one of its subtree, the edges of this subtree are indeed removed in a uniform random order and this fragmentation is independent of the rest of $\tn$.
\end{proof}

As a consequence, we only need to focus on the size of the $C_{n,i}$'s. Our main tool relies on a coupling due originally to Iksanov \& M{\"o}hle \cite{Iksanov_Moehle-A_probabilistic_proof_of_a_weak_limit_law_for_the_number_of_cuts_needed_to_isolate_the_root_of_a_random_recursive_tree} that connects the latter with a certain random walk. Let us introduce a random variable $\xi$ with distribution
\begin{equation*}
\P(\xi = k) = \frac{1}{k(k+1)}, \qquad k \ge 1,
\end{equation*}
then a random walk
\begin{equation*}
S_j = \xi_1 + \dots + \xi_j, \qquad j \ge 1,
\end{equation*}
where $(\xi_i ; i \ge 1)$ are i.i.d. copies of $\xi$, and finally the last-passage time
\begin{equation*}
\lambda(n) = \max\{j \ge 1 : S_j < n\}.
\end{equation*}
A weaker form of the result in \cite{Iksanov_Moehle-A_probabilistic_proof_of_a_weak_limit_law_for_the_number_of_cuts_needed_to_isolate_the_root_of_a_random_recursive_tree}, which is sufficient for our purpose, is the following.

\begin{lem}\label{lem:couplage_Iksanov_Moehle}
One can construct on the same probability space a random recursive tree of size $n$ and its cut-tree, together with a version of the random walk $S$ such that $\zeta(n) \ge \lambda(n)$ and
\begin{equation}\label{eq:taille_sous_arbres_cut_tree}
(|C_{n,1}'|, \dots, |C_{n,\lambda(n)}'|, |C_{n, \lambda(n)}|) = (\xi_1, \dots, \xi_{\lambda(n)}, n - S_{\lambda(n)}).
\end{equation}
\end{lem}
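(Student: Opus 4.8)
The statement asserts a coupling that realizes the sizes of the blocks $C_{n,i}'$ and $C_{n,i}$ along the spine of $\Cut(\tn)$ as increments of the random walk $S$. The natural approach is to make the fragmentation of $\tn$ explicit step by step along the spine and identify, at each step, the conditional law of the size of the block that is split off. So first I would recall how the spine $[n] = C_{n,0} \supset C_{n,1} \supset \dots \supset C_{n,\zeta(n)} = \{1\}$ is generated: at step $i$, one removes a uniformly chosen edge of the current subtree $\t_{n,i-1}$ containing the vertex $1$, splitting it into $\t_{n,i}$ (the part containing $1$) and $\t_{n,i}'$ (the discarded part), with $|C_{n,i}'| = |\t_{n,i}'|$ and $|C_{n,i}| = |\t_{n,i}|$. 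By Lemma \ref{lem:propriete_fractale}, conditionally on the current block $C_{n,i-1}$ and its size $m \coloneqq |C_{n,i-1}|$, the subtree $\t_{n,i-1}$ is a random recursive tree on $m$ vertices; so the whole dynamics reduces to understanding, for a random recursive tree on $m$ vertices, the size of the component that gets detached from the component of the root $1$ when a uniform edge is removed.

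The key computation is therefore: if $\mathsf{T}$ is a random recursive tree on $[m]$ (rooted at $1$), and a uniformly random edge is removed, what is the law of the size of the subtree \emph{not} containing $1$? This is a standard fact about random recursive trees: a uniformly chosen edge of $\mathsf{T}$, written as $\{u, v\}$ with $v$ the endpoint farther from the root, detaches the subtree hanging below $v$, and the number of descendants of $v$ (including $v$) has — after the obvious normalization by the $m-1$ edges — the distribution $\P(\text{size} = k) = \tfrac{1}{k(k+1)}$ for $1 \le k \le m-1$, i.e. exactly the law of $\xi$ conditioned to $\{\xi \le m-1\}$. One clean way to see this: the number of edges whose removal detaches a block of size exactly $k$ is the number of vertices $v$ with exactly $k-1$ descendants; the expected number of such vertices in a random recursive tree on $[m]$ is $\tfrac{m}{k(k+1)}$ for $k \le m-1$ (a classical subtree-size count, e.g. via the probability that $v$ has subtree size $k$, which for $v$ not too close to the leaves equals $\tfrac{1}{k(k+1)} \cdot \tfrac{m}{?}$ up to boundary corrections — and summing, one gets exactly $m-1$ edges in total, since $\sum_{k=1}^{m-1} \tfrac{1}{k(k+1)} = 1 - \tfrac{1}{m}$). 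Dividing by $m-1$ gives the conditional law of $\xi$. I would invoke the Iksanov–Möhle reference for the exact identity rather than re-deriving the boundary terms.

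With that identity in hand, the coupling is built inductively: start with a random walk $S$ with increments $\xi_1, \xi_2, \dots$; at step $i$, given that the current root-block has size $n - S_{i-1}$ (with $S_0 = 0$), if $S_{i-1} < n-1$ we set $|C_{n,i}'| = \xi_i$ provided $\xi_i \le n - S_{i-1} - 1$, i.e. provided $S_i < n$, and we couple the fragmentation so that the detached block indeed has this size (possible since $\xi_i \mid \{\xi_i \le n - S_{i-1}-1\}$ has exactly the right conditional law by the previous paragraph). The recursion runs as long as $S_i < n$, which is exactly up to $i = \lambda(n)$; at that point the current root-block has size $n - S_{\lambda(n)} \ge 2$, and since $\xi_{\lambda(n)+1}$ may overshoot we simply do not force any further agreement — but the fragmentation of the remaining root-block of size $n - S_{\lambda(n)}$ continues independently, so $\zeta(n) \ge \lambda(n)$. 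This gives \eqref{eq:taille_sous_arbres_cut_tree} and the inequality $\zeta(n) \ge \lambda(n)$ simultaneously.

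**Main obstacle.** The only delicate point is the step where one must \emph{couple} the actual fragmentation of the random recursive tree with the prescribed walk increment: one needs that, conditionally on the event that the detached block has a size $\le n - S_{i-1} - 1$, its law matches that of $\xi$ restricted to $\{1, \dots, n - S_{i-1}-1\}$ and renormalized — and crucially, this must be compatible with the recursive structure so that the \emph{next} block is again (conditionally) a random recursive tree. This compatibility is exactly what Lemma \ref{lem:propriete_fractale} provides, but one has to apply it to the root-block $C_{n,i}$ at each step rather than to an arbitrary split-off block, and check that the conditional law used at step $i$ only depends on $n - S_{i-1}$ and not on finer information about $C_{n,i-1}$ — which is true because the subtree-size distribution above depends only on the number of vertices. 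Packaging this into a clean inductive statement (say: conditionally on $S_1, \dots, S_{i}$ with $S_i < n$, the subtree of $\Cut(\tn)$ stemming from $C_{n,i}$ is the cut-tree of a random recursive tree on $n - S_i$ vertices, independently of $\t_{n,1}', \dots, \t_{n,i}'$) is the real content; once it is set up correctly, the identity \eqref{eq:taille_sous_arbres_cut_tree} and $\zeta(n) \ge \lambda(n)$ drop out. For the precise boundary behaviour of the subtree-size counts I would simply cite \cite{Iksanov_Moehle-A_probabilistic_proof_of_a_weak_limit_law_for_the_number_of_cuts_needed_to_isolate_the_root_of_a_random_recursive_tree}.
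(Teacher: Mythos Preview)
The paper does not actually prove this lemma: it is stated as a weaker form of the coupling of Iksanov and M\"ohle \cite{Iksanov_Moehle-A_probabilistic_proof_of_a_weak_limit_law_for_the_number_of_cuts_needed_to_isolate_the_root_of_a_random_recursive_tree} and simply cited. Your sketch is essentially the argument given in that reference, and it is correct. The key distributional identity you isolate --- that removing a uniform edge from a random recursive tree on $m$ vertices detaches a block of size $k$ with probability $\tfrac{m}{(m-1)k(k+1)}$ for $1\le k\le m-1$, which is exactly $\P(\xi=k\mid \xi\le m-1)$ --- is precisely the ingredient Iksanov and M\"ohle use, and your inductive coupling built on Lemma~\ref{lem:propriete_fractale} is the right way to package it.

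One very minor slip: you write that after step $\lambda(n)$ the root-block has size $n-S_{\lambda(n)}\ge 2$, but in fact one only has $n-S_{\lambda(n)}\ge 1$; the case $n-S_{\lambda(n)}=1$ corresponds to $\zeta(n)=\lambda(n)$, and otherwise $\zeta(n)>\lambda(n)$. This does not affect the argument.
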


From now on, we assume that the recursive tree $\tn$ and its cut-tree $\Cut(\tn)$ are indeed coupled with the random walk $S$. This coupling enables us to deduce properties of $\Cut(\tn)$ from that of $S$; we shall need the following results.

\begin{lem}\label{lem:proprietes_marche_aleatoire}
The random walk $S$ fulfills the following properties.
\begin{enumerate}[ref=\thelem(\roman{enumi})]
\item\label{lem:proprietes_marche_aleatoire_loi_des_grands_nombres} Weak law of large numbers:
\begin{equation*}
\frac{1}{k \ln k} S_k \cvk 1 \quad\text{in probability.}
\end{equation*}

\item\label{lem:proprietes_marche_aleatoire_limite_lambda}
The last-passage time satisfies
\begin{equation*}
\frac{\ln n}{n} \lambda(n) \cv 1 \quad\text{in probability.}
\end{equation*}

\item\label{lem:proprietes_marche_aleatoire_reste_apres_lambda}
The undershoot satisfies
\begin{equation*}
\frac{\ln n}{n} (n - S_{\lambda(n)}) \cv 0 \quad\text{in probability.}
\end{equation*}

\item\label{lem:proprietes_marche_aleatoire_limite_mesure}The random point measure
\begin{equation*}
\sum_{i=1}^{\lambda(n)} \delta_{\frac{\ln n}{n} \xi_i}(\d x)
\end{equation*}
converges in distribution on the space of locally finite measures on $(0,\infty]$ endowed with the vague topology towards to a Poisson random measure with intensity $x^{-2} \d x$.
\end{enumerate}
\end{lem}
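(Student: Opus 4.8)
The plan is to establish the four claims in the order (i), (ii), (iii), (iv), relying throughout on the explicit tail $\P(\xi\ge k)=\sum_{j\ge k}\tfrac{1}{j(j+1)}=\tfrac1k$, so that $k\,\P(\xi>k)=\tfrac{k}{k+1}\to1$ while the truncated mean is logarithmic: $\E[\xi\wedge k]=\sum_{j=1}^{k}\P(\xi\ge j)=\sum_{j=1}^{k}\tfrac1j\sim\ln k$. This is precisely the classical criterion for the relative stability of the i.i.d.\ sum $S$ with norming sequence $b_k=k\ln k$, which is the content of (i); a hands-on alternative is to truncate at the level $c_k:=k\ln k$, note that $\E[(\xi\wedge c_k)^2]=2c_k-\sum_{j\le c_k}\tfrac1j\sim2c_k$ so that by Chebyshev $(k\ln k)^{-1}\sum_{i=1}^{k}(\xi_i\wedge c_k)\to1$ in probability, and observe that $\P(\max_{i\le k}\xi_i>c_k)\le k\,\P(\xi>c_k)\sim1/\ln k\to0$ makes the truncation irrelevant. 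Statement (ii) then follows by inversion: since $(S_j)$ is strictly increasing, $\{\lambda(n)\ge j\}=\{S_j<n\}$, so fixing $\varepsilon\in(0,1)$ and applying (i) along $j_n^\pm:=\lceil(1\pm\varepsilon)n/\ln n\rceil$, for which $j_n^\pm\ln j_n^\pm\sim(1\pm\varepsilon)n$, gives $\P(\lambda(n)\ge j_n^+)=\P(S_{j_n^+}<n)\to0$ and $\P(\lambda(n)\le j_n^-)=\P(S_{j_n^-+1}\ge n)\to0$; letting $\varepsilon\to0$ yields (ii).

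For (iii) I would either invoke the analysis of this very walk by Iksanov \& Möhle \cite{Iksanov_Moehle-A_probabilistic_proof_of_a_weak_limit_law_for_the_number_of_cuts_needed_to_isolate_the_root_of_a_random_recursive_tree}, or argue directly as follows. The first-passage time $\tau_n:=\min\{j\ge1:S_j\ge n\}=\lambda(n)+1$ is a stopping time, and for $1\le\ell\le n$ one checks that $\{n-S_{\lambda(n)}\ge\ell\}=\{S_{\lambda(n)}\le n-\ell\}=\{\tau_{n-\ell+1}=\tau_n\}$, the event that the jump first reaching level $n-\ell+1$ already overshoots $n$. Conditioning on $\mathcal F_{\tau_{n-\ell+1}-1}$ and using $\P(\xi\ge a+b\mid\xi\ge a)=a/(a+b)$ turns this into the exact identity $\P(n-S_{\lambda(n)}\ge\ell)=\E\!\left[V_{n-\ell+1}/(V_{n-\ell+1}+\ell-1)\right]$, where $V_m:=m-S_{\lambda(m)}\ge1$; equivalently $\P(V_n\ge\ell)=\int_0^\infty\P(V_{n-\ell+1}>v)\,\tfrac{(\ell-1)\,\d v}{(v+\ell-1)^2}$. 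Iterating this identity upgrades the trivial bound $\sup_m\P(V_m\ge t)\le1$ into a genuine tail estimate $\sup_m\P(V_m\ge t)\le C/\ln(t+\ex)$; with $\ell=\lceil\varepsilon n/\ln n\rceil$ this gives $\P(n-S_{\lambda(n)}\ge\lceil\varepsilon n/\ln n\rceil)\le C/\ln(\varepsilon n/\ln n)\to0$, which is (iii). It should be stressed that no moment method can work: summing $\E[(n-S_k)\mathbf 1_{S_k<n\le S_{k+1}}]=\P(S_k<n)$ over $k$ (using $(n-S_k)\,\P(\xi\ge n-S_k)=1$ on $\{S_k<n\}$) yields $\E[n-S_{\lambda(n)}]=\E[\lambda(n)]+1\sim n/\ln n$, so the mean of the undershoot sits exactly at the scale at which we want it to be negligible, the gap being carried by the rare event that a single straddling jump has size of order $n$.

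For (iv) I would pass to a deterministic number of summands and then sandwich. If $k_n$ is deterministic with $k_n\ln n/n\to t>0$, then $k_n\,\P\!\big(\tfrac{\ln n}{n}\xi>x\big)=k_n\big/\big(\lfloor(n/\ln n)x\rfloor+1\big)\to t/x$ for every $x>0$, so by the standard point-process theorem for triangular arrays of i.i.d.\ observations (the criterion being precisely this vague convergence of the one-point intensities on $(0,\infty]$) the measure $\sum_{i=1}^{k_n}\delta_{\frac{\ln n}{n}\xi_i}$ converges in distribution, for the vague topology, to a Poisson random measure with intensity $t\,x^{-2}\,\d x$. Now fix $\varepsilon>0$, set $k_n^\pm:=\lfloor(1\pm\varepsilon)n/\ln n\rfloor$ and $M_n^\pm:=\sum_{i=1}^{k_n^\pm}\delta_{\frac{\ln n}{n}\xi_i}$; by (ii) the event $E_n:=\{k_n^-\le\lambda(n)\le k_n^+\}$ has probability tending to $1$, and on $E_n$ the random measure $N_n:=\sum_{i=1}^{\lambda(n)}\delta_{\frac{\ln n}{n}\xi_i}$ satisfies $M_n^-\le N_n\le M_n^+$. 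Hence, for any continuous $f\ge0$ with support bounded away from $0$, the Laplace functional $\E[\ex^{-\langle N_n,f\rangle}]$ is trapped between $\E[\ex^{-\langle M_n^\pm,f\rangle}]$ up to an additive error $\P(E_n^c)\to0$; letting $n\to\infty$ and then $\varepsilon\to0$, both bounds converge to $\exp\!\big(-\!\int_0^\infty(1-\ex^{-f(x)})\,x^{-2}\,\d x\big)$, the Laplace functional of a Poisson measure with intensity $x^{-2}\,\d x$, which is (iv).

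The main obstacle is (iii). Parts (i), (ii) and (iv) are, respectively, a weak law for a relatively stable sum, its inversion by monotonicity, and an extreme-value point-process limit reduced to a deterministic index by monotone sandwiching — all routine. By contrast (iii) is a statement about the \emph{bulk} of the undershoot $n-S_{\lambda(n)}$, whose distribution has a heavy upper tail and a mean $\sim n/\ln n$ of the critical order, so it is invisible to moment bounds and must be reached through the self-referential estimate above (where the real work lies), or else imported from Iksanov \& Möhle.
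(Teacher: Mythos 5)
Your handling of parts (i), (ii) and (iv) is correct, and in fact more self-contained than the paper, which disposes of (i) by appealing to a stable-law limit theorem, of (ii) by citing Proposition~2 of Iksanov--M\"ohle, and of (iv) by citing Bertoin's Lemma~1(ii) together with Theorem~16.16 of Kallenberg. Your truncation-at-$k\ln k$ argument for (i) is a standard and clean substitute for the generating-function route, the inversion of (i) via the monotone identity $\{\lambda(n)\ge j\}=\{S_j<n\}$ gives (ii) directly, and the monotone sandwiching of $\sum_{i\le\lambda(n)}\delta_{\xi_i\ln n/n}$ between the deterministic-index measures $M_n^\pm$ is precisely what underlies the cited Kallenberg result.

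Part (iii), however, is where there is a genuine gap in your self-contained route. The exact identity
\begin{equation*}
\P(V_n\ge\ell)=\E\!\left[\frac{V_{n-\ell+1}}{V_{n-\ell+1}+\ell-1}\right]
=\int_0^\infty\P(V_{n-\ell+1}>v)\,\frac{(\ell-1)\,\d v}{(v+\ell-1)^2}
\end{equation*}
is correct (and nicely derived), but the claim that ``iterating this identity upgrades the trivial bound into $\sup_m\P(V_m\ge t)\le C/\ln(t+\ex)$'' is not justified and, as stated, does not follow. Writing $G(\ell):=\sup_{m>\ell}\P(V_m\ge\ell)$ (the supremum must be over $m>\ell$ since $V_m\le m-1$), the identity only yields $G(\ell)\le\int_0^\infty G(v)\,K_\ell(\d v)$ where $K_\ell(\d v)=(\ell-1)(v+\ell-1)^{-2}\,\d v$ is a probability measure. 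After the change of variables $v=(\ell-1)\ex^{x}$ one sees that $K_\ell$ is the law of $(\ell-1)\ex^{X}$ with $X$ a standard (symmetric, mean-zero) logistic variable, so the inequality is of the form $G(\ell)\le\E[G((\ell-1)\ex^{X})]$. This is a sub-harmonicity statement for a mean-zero multiplicative random walk, and it is consistent with $G$ being \emph{any} constant; in particular it does not force $G(\ell)\to 0$. The bound $U(k)=O(1/\ln k)$ for the renewal mass function of this walk (which is what your claimed estimate is secretly equivalent to, via $\P(V_n\ge\ell)=\sum_{k=0}^{n-\ell}U(k)/(n-k)$) is a strong renewal theorem of Erickson/Garsia--Lamperti type; it is true, but it is a substantive input and does not emerge from the fixed-point inequality alone. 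Your fallback — importing (iii) from Lemma~2 of Iksanov \& M\"ohle — is exactly what the paper does, and is the right move here unless you want to invoke the strong renewal theorem explicitly.

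One further small remark: your observation that $\E[n-S_{\lambda(n)}]\sim n/\ln n$, which shows no first-moment argument can work, is a nice and correct sanity check that is not in the paper.
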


\begin{proof}
The first assertion can be checked using generating functions; a standard limit theorem for random walk with step distribution in the domain of attraction of a stable law entails moreover the weak convergence of $k^{-1} S_k - \ln k$ to the so-called continuous Luria-Delbr\"{u}ck distribution, see for instance Geluk \& de Haan \cite{Geluk_de_Haan-Stable_probability_distributions_and_their_domains_of_attraction_a_direct_approach}. Iksanov \& M{\"o}hle \cite{Iksanov_Moehle-A_probabilistic_proof_of_a_weak_limit_law_for_the_number_of_cuts_needed_to_isolate_the_root_of_a_random_recursive_tree}, provide finer limit theorems for the last-passage time as well as the undershoot, see respectively Proposition 2 and Lemma 2 there. Finally, the last assertion is the claim of Lemma 1(ii) of Bertoin \cite{Bertoin-The_cut_tree_of_large_recursive_trees} and follows readily from the distribution of $\xi$ and Theorem 16.16 of Kallenberg \cite{Kallenberg-Foundations_of_modern_probability}.
\end{proof}

Note from Lemma \ref{lem:couplage_Iksanov_Moehle} that
\begin{equation*}
\lambda(n) \le \zeta(n) \le \lambda(n) + |C_{n, \lambda(n)}| = \lambda(n) + n - S_{\lambda(n)}.
\end{equation*}
Lemma \ref{lem:proprietes_marche_aleatoire_limite_lambda} and \ref{lem:proprietes_marche_aleatoire_reste_apres_lambda} thus entail
\begin{equation*}
\frac{\ln n}{n} \zeta(n) \cv 1\qquad\text{in probability.}
\end{equation*}
This result was obtained earlier by Meir \& Moon \cite{Meir_Moon-Cutting_down_recursive_trees} who proved
\begin{equation*}
\lim_{n \to \infty} \E\bigg[\frac{\ln n}{n} \zeta(n)\bigg]
= \lim_{n \to \infty} \E\bigg[\bigg(\frac{\ln n}{n} \zeta(n)\bigg)^2\bigg]
= 1.
\end{equation*}
We will use this stronger result in Section \ref{section4} below.

%%%%%%%%%%%%%%%%%%%%%%%%%%%%%%%%%%%%%%%%%%%%%%%%%%%%%
%%%%%%%%%%%%%%%%%%%%%%%%%% SECTION 3 %%%%%%%%%%%%%%%%%%%%
%%%%%%%%%%%%%%%%%%%%%%%%%%%%%%%%%%%%%%%%%%%%%%%%%%%%%
\section{Density of fireproof vertices}\label{section3}

As claimed in the introduction, we prove a non-trivial limit in distribution for the number $I_n$ of fireproof vertices in $\tn$, under an appropriate scaling, in the subcritical regime. We begin with a lemma.

\begin{lem}\label{lem:racine_brule_avec_grande_proba_cas_sous_critique}
Consider the subcritical regime $1 \gg p_n \gg \ln n / n$. Then, as $n \to \infty$, the root of $\tn$ burns with high probability, and the size of its burnt component, rescaled by $n$, converges to $1$ in probability.
\end{lem}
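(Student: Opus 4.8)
The statement concerns the root-to-leaf path $[n] = C_{n,0} \supset C_{n,1} \supset \dots \supset C_{n,\zeta(n)} = \{1\}$ in $\Cut(\tn)$, equipped with the marks of $\varphi_n$. The root of $\tn$ burns if and only if at least one of the internal blocks $C_{n,0}, \dots, C_{n,\zeta(n)-1}$ carries a mark; if the first mark appears at block $C_{n,\kappa-1}$, then the burnt component containing the root is the subtree of $\Cut(\tn)$ stemming from $C_{n,\kappa-1}$, whose number of leaves is $|C_{n,\kappa-1}|$. Recall each internal block along the path is marked independently with probability $p_n$, so the location of the first mark is governed by a geometric variable $G_n$ with parameter $p_n$: the root burns iff $G_n \le \zeta(n)$, i.e. iff $G_n \le \zeta(n)$, where the burnt component is then $C_{n, G_n - 1}$ if $G_n \geq 1$ (with the convention that position $1$ corresponds to the block $C_{n,0} = [n]$ being marked, in which case the whole tree burns).

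First I would show the root burns with high probability. Since $p_n \gg \ln n/n$, we have $n p_n/\ln n \to \infty$, hence $\P(G_n > \ln n /p_n) = (1-p_n)^{\lfloor \ln n/p_n\rfloor} \to 0$; meanwhile $\ln n / p_n = o(n/\ln n)$, and by Lemma~\ref{lem:proprietes_marche_aleatoire} (via $\zeta(n) \ge \lambda(n)$ and $\frac{\ln n}{n}\lambda(n) \to 1$ in probability) we have $\zeta(n) \gg \ln n/p_n$ with high probability. Therefore $\P(G_n > \zeta(n)) \to 0$, i.e.\ the root burns w.h.p.

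Next, for the size of the burnt component I would argue that the first mark occurs near the root of the path, so that the corresponding block is almost all of $[n]$. Precisely, fix $\e > 0$; on the event that the root burns, the burnt component is $C_{n, G_n - 1}$, and I claim $|C_{n, G_n-1}| \ge (1-\e) n$ with high probability. By Lemma~\ref{lem:couplage_Iksanov_Moehle}, for $i \le \lambda(n)$ we have $|C_{n,i}| = n - S_i$, so $|C_{n,i}| \ge (1-\e)n$ as soon as $S_i \le \e n$. By Lemma~\ref{lem:proprietes_marche_aleatoire_loi_des_grands_nombres}, $S_k \approx k \ln k$, so $S_i \le \e n$ holds (w.h.p., uniformly) for all $i$ up to order $\e n/\ln n$; and since $G_n$ is of order $1/p_n = o(n/\ln n)$, we have $G_n - 1 \le \e n /\ln n$ w.h.p., whence $S_{G_n-1} \le \e n$ and thus $|C_{n,G_n-1}| \ge (1-\e)n$. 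Combining with the trivial bound $|C_{n,G_n-1}| \le n$ gives $n^{-1} b_n^0 \to 1$ in probability. A clean way to package the two needed facts — that $G_n = o(n/\ln n)$ w.h.p.\ and that $S_j = o(n)$ for $j = o(n/\ln n)$ w.h.p. — is to choose a deterministic cutoff $m_n$ with $1/p_n \ll m_n \ll n/\ln n$ (possible exactly because $p_n \gg \ln n/n$), observe $\P(G_n > m_n) \to 0$, and use Lemma~\ref{lem:proprietes_marche_aleatoire_loi_des_grands_nombres} to get $\P(S_{m_n} > \e n) \to 0$ since $S_{m_n} \sim m_n \ln m_n = o(n)$.

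The main obstacle is handling the joint dependence between the random location $G_n$ of the first mark and the random walk $S$ that encodes the block sizes: one cannot simply plug a random index into a law-of-large-numbers statement. The remedy, as above, is to sandwich $G_n$ between deterministic bounds with high probability (the independence of $\varphi_n$ from $\tn$ and from $S$ makes $\P(G_n > m_n)$ a purely deterministic computation), reducing everything to the behaviour of $S$ and $\lambda(n)$ at deterministic times, for which Lemma~\ref{lem:proprietes_marche_aleatoire} applies directly. One should also take a moment to note that Lemma~\ref{lem:couplage_Iksanov_Moehle} only gives $|C_{n,i}| = n - S_i$ for $i \le \lambda(n)$, which is why it is convenient to keep $m_n \ll n/\ln n \sim \lambda(n)$, so that w.h.p.\ $G_n - 1 \le m_n \le \lambda(n)$ and the identity applies.
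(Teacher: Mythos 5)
Your route is the paper's: locate the first mark on the spine of $\Cut(\tn)$ from $[n]$ to $\{1\}$ at a geometric position $G_n$, use the Iksanov--M\"{o}hle identity $|C_{n,i}| = n - S_i$, and invoke the weak law of large numbers for $S$ to show the burnt root-component has size $n-o(n)$. There is, however, a real slip in your first paragraph: the assertion that $\ln n/p_n = o(n/\ln n)$ is equivalent to $p_n \gg (\ln n)^2/n$, which is strictly stronger than the stated hypothesis $p_n \gg \ln n/n$. For instance with $p_n = (\ln n)(\ln\ln n)/n$ one has $\ln n/p_n = n/\ln\ln n \gg n/\ln n$, so $\zeta(n) < \ln n/p_n$ with high probability and the decomposition $\P(G_n > \zeta(n)) \le \P(G_n > \ln n/p_n) + \P(\zeta(n) < \ln n/p_n)$ is vacuous.

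Fortunately the ``clean packaging'' you state afterward is exactly what repairs this: pick a deterministic cutoff $m_n$ with $1/p_n \ll m_n \ll n/\ln n$, which exists if and only if $1/p_n \ll n/\ln n$, i.e.\ if and only if $p_n \gg \ln n / n$. This one cutoff then yields both $\P(G_n > m_n) \to 0$ and $\P(m_n > \lambda(n)) \to 0$, hence the root burns w.h.p., and simultaneously $S_{m_n} = O_P(m_n \ln m_n) = o_P(n)$, hence the burnt component has size $n-o_P(n)$. You should therefore lead with that choice rather than the specific $\ln n / p_n$. One further difference worth recording: the paper's proof extracts the sharper distributional convergence $\frac{p_n}{\ln(1/p_n)}S_{\sigma(n)} \to \e_1$ via a functional law of large numbers for $S$; that is more than this lemma requires, but it is recorded as \eqref{eq:convergence_arbres_non_brules_cas_sous_critique} and reused verbatim in the proof of Theorem~\ref{thm:limite_nombre_sites_ignifuges_cas_sous_critique}, whereas your softer $o_P(n)$ bound would not suffice there.
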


\begin{proof}
Consider the path from $[n]$ to $\{1\}$ in $\Cut(\tn)$. It contains at most one mark, whose height $\sigma(n)$ is distributed as $g_n \wedge \zeta(n)$ where $g_n$ is a geometric random variable with parameter $p_n$ independent of $\zeta(n)$. Recall from Lemma \ref{lem:proprietes_marche_aleatoire} that $\zeta(n) \ge \lambda(n) \sim n / \ln n$ in probability, so this mark exists with high probability and, moreover,
\begin{equation*}
p_n \sigma(n) \cvloi \e_1.
\end{equation*}
In particular, we have $\sigma(n) \le \lambda(n)$ with high probability. On this event, observe thanks to Lemma \ref{lem:couplage_Iksanov_Moehle} that the size of the burnt component which contains the root is given by
\begin{equation*}
|C_{n,\sigma(n)}| = n - S_{\sigma(n)}.
\end{equation*}
It follows from Lemma \ref{lem:proprietes_marche_aleatoire_loi_des_grands_nombres} and a standard argument (cf. Theorem 15.17 in Kallenberg \cite{Kallenberg-Foundations_of_modern_probability}) that for every $y \ge 0$,
\begin{equation*}
\sup_{x \in [0, y]} \bigg|\frac{p_n}{\ln(1/p_n)} S_{\lfloor x / p_n\rfloor} - x\bigg| \cv 0
\quad\text{in probability,}
\end{equation*}
and we conclude that
\begin{equation}\label{eq:convergence_arbres_non_brules_cas_sous_critique}
\frac{p_n}{\ln(1/p_n)} S_{\sigma(n)} \cvloi \e_1.
\end{equation}
Note that $p_n  / \ln(1/p_n) \gg 1/n$ when $p_n \gg \ln n / n$, therefore
\begin{equation*}
n^{-1} |C_{n,\sigma(n)}| \cv 1
\qquad\text{in probability,}
\end{equation*}
and the proof is complete.
\end{proof}

We next prove Theorem \ref{thm:limite_nombre_sites_ignifuges_cas_sous_critique}, specifically,
\begin{equation*}
\frac{p_n}{\ln(1/p_n)} I_n \cvloi \e_1 \qquad\text{when}\quad p_n \gg \ln n / n,
\end{equation*}
where $\e_1$ is an exponential random variable with rate $1$. As we noted, $p_n  / \ln(1/p_n) \gg 1/n$ when $p_n \gg \ln n / n$, so this result recovers Theorem \ref{thm:transition_de_phase_arbres_recursifs_sub}. Observe also that in the critical regime $p_n' \sim c \ln n / n$, we have $p_n' / \ln(1/p_n') \sim c/n$; it then follows from Theorem \ref{thm:transition_de_phase_arbres_recursifs_crit} that in this case
\begin{equation*}
\frac{p_n'}{\ln(1/p_n')} I_n \cvloi c (\e_c \wedge 1) = \e_1 \wedge c,
\end{equation*}
and the right-hand side further converges to $\e_1$ as $c \to \infty$. The same phenomenon appears in \cite{Marzouk-Fires_large_cayley_trees} for Cayley trees, were in both regimes, critical and subcritical, one should rescale $I_n$ by a factor $p_n^2$ and the limit for the critical case converges to that of the subcritical case when $c \to \infty$.

\begin{proof}[Proof of Theorem \ref{thm:limite_nombre_sites_ignifuges_cas_sous_critique}]
The proof borrows ideas from Section 3 of Bertoin \cite{Bertoin-The_cut_tree_of_large_recursive_trees}. Recall that $I_n$ is the number of leaves in the component of $\Cut(\tn)$ which contains the root $[n]$ after logging at the atom of $\varphi_n$. According to the proof of Lemma \ref{lem:racine_brule_avec_grande_proba_cas_sous_critique}, with high probability, there exists a mark on the path from $[n]$ to $\{1\}$ in $\Cut(\tn)$, at height $\sigma(n)$. Observe that all the other marks of $\varphi_n$ are contained in the subtrees of $\Cut(\tn)$ that stem from the blocks $C_{n,1}', \dots, C_{n,\sigma(n)}'$. Moreover, we have seen, appealing to Lemma \ref{lem:couplage_Iksanov_Moehle}, that
\begin{equation*}
\frac{p_n}{\ln(1/p_n)} \sum_{i = 1}^{\sigma(n)} |C_{n,i}'| \cvloi \e_1.
\end{equation*}
It only remains to show that the proportion of leaves in all these subtrees which belong to the root-component of $\Cut(\tn)$ converges to $1$ in probability. Recall from Lemma \ref{lem:propriete_fractale} that, conditionally given the sets $C_{n,1}', \dots, C_{n,\sigma(n)}'$, the subtrees of $\Cut(\tn)$ that stems from these blocks are independent and distributed respectively as the cut-tree of a random recursive tree on $C_{n,i}'$. As in the proof of Proposition \ref{pro:transition_de_phase_generale}, we show that the probability that a leaf chosen uniformly at random in these subtrees belongs to the root-component converges to $1$. The latter is bounded from below by
\begin{equation*}
\E\Big[(1-p_n)^{\max\{\mathrm{Depth}(\Cut(\t_{n,i}')) , 1 \le i \le \sigma(n)\}}\Big],
\end{equation*}
where $\mathrm{Depth}(T)$ denotes the maximal distance in the tree $T$ from the root to a leaf. The proof then boils down to the convergence
\begin{equation*}
p_n \max\{\mathrm{Depth}(\Cut(\t_{n,i}')) ; 1 \le i \le \sigma(n)\} \cv 0
\qquad \text{in probability.}
\end{equation*}
Proposition 1 of Bertoin \cite{Bertoin-The_cut_tree_of_large_recursive_trees} proves a similar statement, in the case where $p_n = \ln n / n$ and the maximum is up to $\lambda(n)$. We closely follow the arguments in \cite{Bertoin-The_cut_tree_of_large_recursive_trees}. Fix $\varepsilon > 0$; from Lemma \ref{lem:propriete_fractale}, since $\mathrm{Depth}(\Cut(T)) \le |T|$, for every $m \in \N$ and $a > 0$,
\begin{equation*}
\P(p_n \max\{\mathrm{Depth}(\Cut(\t_{n,i}')) , 1 \le i \le \sigma(n)\} > \varepsilon)
\end{equation*}
is bounded from above by
\begin{equation*}
m \sup_{k \le a / p_n} \P(p_n \mathrm{Depth}(\Cut(\t_{k})) > \varepsilon) + \P(N(\varepsilon, n) > m) + \P(N(a, n) \ge 1),
\end{equation*}
where $N(z, n) = \Card\{i = 1, \dots, \sigma(n) : |\t_{n, i}'| > z / p_n\}$. On the one hand, from \eqref{eq:taille_sous_arbres_cut_tree} and the distribution of $\xi$, conditionally given $\sigma(n)$ with $\sigma(n) \le \lambda(n)$, $N(z, n)$ is binomial distributed with parameters $\sigma(n)$ and $\lceil z / p_n \rceil^{-1}$; as a consequence, for any $\delta > 0$, we may fix $m$ and $a$ sufficiently large so that
\begin{equation*}
\limsup_{n \to \infty} \P(N(\varepsilon, n) > m) + \P(N(a, n) \ge 1) \le \delta.
\end{equation*}
On the other hand, from \eqref{eq:taille_sous_arbres_cut_tree}, we have
\begin{equation*}
\mathrm{Depth}(\Cut(\t_{k})) \le \lambda(k) + \max\{\xi_i , 1 \le i \le \lambda(k)\} + (k - S_{\lambda(k)})
\end{equation*}
which, rescaled by a factor $p_n$, converges in probability to $0$ uniformly for $k \le a / p_n$ thanks to Lemma \ref{lem:proprietes_marche_aleatoire}. This concludes the proof.
\end{proof}

\begin{rem}
Let $C_n$ be the root-component of $\Cut(\tn)$ after performing a Bernoulli bond percolation, in which each edge is removed with probability $p_n$; we endow it with the graph distance $d_n$ and the measure $\nu_n$ which assigns mass $1$ to each leaf. Adapting Section 3 of Bertoin \cite{Bertoin-The_cut_tree_of_large_recursive_trees}, the proofs of Proposition \ref{pro:composante_brulee_macro_contient_la_racine} and Theorem \ref{thm:limite_nombre_sites_ignifuges_cas_sous_critique} here respectively entail the following weak convergences for the pointed Gromov--Hausdorff--Prokhorov topology:
\begin{equation*}
\bigg(C_n, \{[n]\}, p_n d_n, \frac{p_n}{\ln(1/p_n)} \nu_n\bigg) \cvloi \big([0, \e_1], \{0\}, | \cdot |, \Leb\big),
\quad\text{when}\quad p_n \gg \ln n / n,
\end{equation*}
and
\begin{equation*}
\bigg(C_n, \{[n]\}, \frac{\ln n}{n} d_n, \frac{1}{n} \nu_n\bigg) \cvloi \big([0, \e_c \wedge 1], \{0\}, | \cdot |, \Leb\big),
\quad\text{when}\quad p_n \sim c \ln n / n,
\end{equation*}
where in both cases, $| \cdot |$ and $\Leb$ refer respectively to the Euclidean distance and Lebesgue measure. The same arguments also yield
\begin{equation*}
\bigg(C_n, \{[n]\}, \frac{\ln n}{n} d_n, \frac{1}{n} \nu_n\bigg) \cvloi \big([0, 1], \{0\}, | \cdot |, \Leb\big),
\quad\text{when}\quad p_n \ll \ln n / n.
\end{equation*}
\end{rem}

%%%%%%%%%%%%%%%%%%%%%%%%%%%%%%%%%%%%%%%%%%%%%%%%%%%%%%%
%%%%%%%%%%%%%%%%%%%%%%%%%% SECTION 4 %%%%%%%%%%%%%%%%%%%%
%%%%%%%%%%%%%%%%%%%%%%%%%%%%%%%%%%%%%%%%%%%%%%%%%%%%%%%
\section{Connectivity properties of the fireproof forest}\label{section4}

We next focus on the fireproof forest. We first find an asymptotic estimate for the probability that the root of $\tn$ and a uniform random vertex belong to the same fireproof subtree, in both the critical and supercritical cases. We then derive some consequences in terms of the existence of a giant fireproof component.

\begin{proofof}{Theorem \ref{thm:probabilite_site_uniforme_et_racine_dans_le_meme_arbre_ignifuge}}
We use a so-called \emph{spinal decomposition}: fix $X \in [n]$ and denote by $h(X) = d(X, 1)$ the height of $X$ in $\tn$. Let $V_0, \dots, V_{h(X)}$ be the vertices on the oriented branch from $1$ to $X$: $V_0 = 1$, $V_{h(X)} = X$ and for each $i = 1, \dots, h(X)$, $V_{i-1}$ is the parent of $V_i$. Removing all the edges $\{V_i, V_{i+1}\}$ disconnects $\tn$ into $h(X)+1$ subtrees denoted by $T_0, \dots, T_{h(X)}$ where $T_i$ contains $V_i$ for every $i = 0, \dots, h(X)$. Clearly, $V_0 = 1$ and $V_{h(X)} = X$ belong to the same fireproof connected component if and only if all the $V_i$'s are fireproof, i.e. when all the edges $\{V_i, V_{i+1}\}$ are fireproof and each $V_i$ is fireproof for the dynamics restricted to the tree $T_i$.

Using the inductive construction of random recursive trees described in the introduction, one sees that, when removing the edge $\{V_0, V_1\}$, the two subtrees we obtain are, conditional on their set of vertices, independent random recursive trees. The one containing $V_0$ is $T_0$. Removing the edge $\{V_1, V_2\}$ in the other subtree, we obtain similarly that $T_1$ is, conditional on its set of vertices and that of $T_0$, a random recursive tree independent of $T_0$. We conclude by induction that conditional on their set of vertices, the $T_i$'s are independent random recursive trees rooted at $V_i$ respectively. 

Recall that for every $k \ge 1$, $\zeta(k)$ denotes the height of the leaf $\{1\}$ in the cut-tree $\Cut(\t_k)$ of a random recursive tree of size $k$. We have seen that the root of $\t_k$ is fireproof with probability $\E[(1-p_n)^{\zeta(k)}]$. Thus, from the discussion above, the probability that $X$ and $1$ belong to the same fireproof connected component is given by
\begin{equation}\label{eq:proba_qu_une_branche_soit_ignifugee}
\E\bigg[\exp\bigg(\ln(1-p_n) \bigg(h(X) + \sum_{i=0}^{h(X)} \zeta_i(|T_i|)\bigg)\bigg)\bigg],
\end{equation}
where $(\zeta_i(k) ; k \ge 1)_{i \ge 0}$ is a sequence of i.i.d. copies of $(\zeta(k) ; k \ge 1)$. We prove that if $X_n$ is uniformly distributed on $[n]$, then
\begin{equation}\label{eq:nombre_de_coupes_pour_isoler_un_chemin}
\frac{\ln n}{n} \bigg(h(X_n) + \sum_{i=0}^{h(X_n)} \zeta_i(|T_i|)\bigg) \cv 1
\qquad\text{in probability,}
\end{equation}
which yields Theorem \ref{thm:probabilite_site_uniforme_et_racine_dans_le_meme_arbre_ignifuge}. It is well-known that $h(X_n) \sim \ln n$ in probability as $n \to \infty$ so we only need to consider the sum in \eqref{eq:nombre_de_coupes_pour_isoler_un_chemin}. Let us first discuss the distribution of the $|T_i|$'s.

Let $S_n(0)$ be a random variable uniformly distributed in $[n]$. Then, for every $i \ge 1$, conditionally given $\overline{S}_n(i-1) \coloneqq S_n(0) + \dots + S_n(i-1)$, let $S_n(i)$ be uniformly distributed in $[n-\overline{S}_n(i-1)]$ if $\overline{S}_n(i-1) < n$ and set $S_n(i) = 0$ otherwise. Let $\kappa_n \coloneqq \inf\{i \ge 0 : \overline{S}_n(i) = n\}$; note that $S_n(i) = 0$ if and only if $i > \kappa_n$ and that $S_n(0) + \dots + S_n(\kappa_n) = n$. We call the sequence $\mathbf{S}_n \coloneqq (S_n(0), \dots, S_n(\kappa_n))$ a \emph{discrete stick-breaking process}. Denote finally by $\tilde{S}_n$ a size-biased pick from $\mathbf{S}_n$. Then $\tilde{S}_n$ is uniformly distributed in $[n]$ (see Lemma \ref{lem:nombre_de_cycles_a_de_taille_k} below) and for every measurable and non-negative functions $f$ and $g$, we have
\begin{equation}\label{eq:stick_breaking_discret}
\E\bigg[g\bigg(\sum_{i=0}^{\kappa_n} f(S_n(i))\bigg)\bigg]
= \E\bigg[g\bigg(\sum_{i=0}^{\kappa_n} n \frac{f(S_n(i))}{S_n(i)} \P(\tilde{S}_n = S_n(i) \mid \mathbf{S}_n)\bigg)\bigg]
= \E\bigg[g\bigg(n\frac{f(\tilde{S}_n)}{\tilde{S}_n}\bigg)\bigg]
\end{equation}

The stick breaking-process appears in a random recursive tree in two ways: \emph{vertically} and \emph{horizontally}. Indeed, if we discard the root of $\t_{n+1}$ and its adjacent edges, then the sequence formed by the sizes of the resulting subtrees, ranked in increasing order of their root is distributed as $\mathbf{S}_n$. In particular, the one containing the leaf $n+1$ has size $\tilde{S}_n$ so is uniformly distributed in $[n]$. Further, if $n+1$ is not the root of this subtree, we can iterate the procedure of removing the root and discarding all the components but the one containing $n+1$. Conditionally given the size $s_i$ of the component containing $n+1$ at the $i$-th step, its size at the $i+1$-st step is uniformly distributed in $[s_i-1]$, thus defining a stick-breaking process. We continue until the component containing $n+1$ is reduced to the singleton $\{n+1\}$; this takes $h(n+1) = \kappa_n+1$ steps.

Let $X_n$ be the parent of $n+1$ in $\t_{n+1}$; then $X_n$ is distributed as a uniform random vertex of $\tn$. Moreover, we just saw that $h(X_n) = h(n+1)-1$ is distributed as $\kappa_n$ and, further, the sequence $|T_0|, \dots, |T_{h(X_n)}|$ previously defined is distributed as $\mathbf{S}_n$. Theorem \ref{thm:probabilite_site_uniforme_et_racine_dans_le_meme_arbre_ignifuge} will thereby follow from the convergence
\begin{equation}\label{eq:nombre_de_coupes_pour_isoler_des_racines_dans_un_stick_breaking}
\frac{\ln n}{n} \sum_{i=0}^{\kappa_n} \zeta_i(S_n(i)) \cv 1
\qquad\text{in probability.}
\end{equation}
We prove the convergence of the first and second moments. Let us define $f_1(\ell) \coloneqq \E[\zeta(\ell)]$ and $f_2(\ell) \coloneqq \E[\zeta(\ell)^2]$ for every $\ell \ge 1$. We already mentioned that Meir \& Moon \cite{Meir_Moon-Cutting_down_recursive_trees} proved that, as $\ell \to \infty$,
\begin{equation}\label{eq:moments_zeta}
f_1(\ell) = \frac{\ell}{\ln \ell} (1+o(1))
\qquad\text{and}\qquad
f_2(\ell) = \bigg(\frac{\ell}{\ln \ell}\bigg)^2 (1+o(1)).
\end{equation}
Conditioning first on $\mathbf{S}_n$ and then averaging, we have
\begin{equation*}
\E\bigg[\sum_{i=0}^{\kappa_n} \zeta_i(S_n(i))\bigg]
= \E\bigg[\sum_{i=0}^{\kappa_n} f_1(S_n(i))\bigg],
\end{equation*}
and, using the conditional independence of the $\zeta_i$'s,
\begin{align*}
\E\bigg[\bigg(\sum_{i=0}^{\kappa_n} \zeta_i(S_n(i))\bigg)^2\bigg]
&= \E\bigg[\sum_{i=0}^{\kappa_n} \zeta_i(S_n(i))^2\bigg] + \E\bigg[\sum_{i\ne j} \zeta_i(S_n(i)) \zeta_j(S_n(j))\bigg]
\\
&= \E\bigg[\sum_{i=0}^{\kappa_n} f_2(S_n(i))\bigg] + \E\bigg[\sum_{i\ne j} f_1(S_n(i)) f_1(S_n(j))\bigg]
\\
&= \E\bigg[\sum_{i=0}^{\kappa_n} f_2(S_n(i))\bigg] + \E\bigg[\bigg(\sum_{i=0}^{\kappa_n} f_1(S_n(i))\bigg)^2\bigg] - \E\bigg[\sum_{i=0}^{\kappa_n} f_1(S_n(i))^2\bigg].
\end{align*}
We finally compute these four expectations appealing to \eqref{eq:stick_breaking_discret}, \eqref{eq:moments_zeta} and Lemma \ref{lem:nombre_de_cycles_a_de_taille_k} below: as $n \to \infty$,
\begin{equation*}
\E\bigg[\sum_{i=0}^{\kappa_n} f_1(S_n(i))\bigg]
= n \E\bigg[\frac{f_1(\tilde{S}_n)}{\tilde{S}_n}\bigg]
= \sum_{\ell=1}^n \frac{f_1(\ell)}{\ell}
\sim \sum_{\ell=2}^n \frac{1}{\ln \ell}
\sim \frac{n}{\ln n};
\end{equation*}
similarly
\begin{equation*}
\E\bigg[\sum_{i=0}^{\kappa_n} f_2(S_n(i))\bigg]
= n \E\bigg[\frac{f_2(\tilde{S}_n)}{\tilde{S}_n}\bigg]
= \sum_{\ell=1}^n \frac{f_2(\ell)}{\ell}
\sim \sum_{\ell=2}^n \frac{\ell}{(\ln \ell)^2}
\sim \frac{n^2}{2 (\ln n)^2};
\end{equation*}
and
\begin{equation*}
\E\bigg[\bigg(\sum_{i=0}^{\kappa_n} f_1(S_n(i))\bigg)^2\bigg]
= n^2 \E\bigg[\bigg(\frac{f_1(\tilde{S}_n)}{\tilde{S}_n}\bigg)^2\bigg]
= n \sum_{\ell=1}^n \bigg(\frac{f_1(\ell)}{\ell}\bigg)^2
\sim n \sum_{\ell=2}^n \frac{1}{(\ln \ell)^2}
\sim \frac{n^2}{(\ln n)^2};
\end{equation*}
finally
\begin{equation*}
\E\bigg[\sum_{i=0}^{\kappa_n} f_1(S_n(i))^2\bigg]
= n \E\bigg[\frac{f_1(\tilde{S}_n)^2}{\tilde{S}_n}\bigg]
= \sum_{\ell=1}^n \frac{f_1(\ell)^2}{\ell}
\sim \sum_{\ell=2}^n \frac{\ell}{(\ln \ell)^2}
\sim \frac{n^2}{2 (\ln n)^2}.
\end{equation*}
Thus, the first two moments of $n^{-1} \ln n \sum_{i=0}^{\kappa_n} \zeta_i(S_n(i))$ converge to $1$, which implies \eqref{eq:nombre_de_coupes_pour_isoler_des_racines_dans_un_stick_breaking}, the convergence even holds in $L^2$.
\end{proofof}

In the course of the proof we used the following lemma.

\begin{lem}\label{lem:nombre_de_cycles_a_de_taille_k}
A size-biased pick $\tilde{S}_n$ from a discrete stick-breaking process $\mathbf{S}_n$ is uniformly distributed in $[n]$.
\end{lem}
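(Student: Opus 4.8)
The plan is to pass from the size-biased pick to the counting statistics of the stick-breaking composition. For $1 \le k \le n$ let $N_k^{(n)}$ denote the number of indices $i \in \{0, \dots, \kappa_n\}$ with $S_n(i) = k$. The parts of $\mathbf{S}_n$ sum to $n$, so conditionally on $\mathbf{S}_n$ a size-biased pick selects the index $i$ with probability $S_n(i)/n$; hence
\begin{equation*}
\P(\tilde{S}_n = k \mid \mathbf{S}_n) = \frac{1}{n}\sum_{i : S_n(i) = k} S_n(i) = \frac{k}{n}\, N_k^{(n)} ,
\end{equation*}
and taking expectations, $\P(\tilde{S}_n = k) = (k/n)\,\E[N_k^{(n)}]$. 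It therefore suffices to prove that $\E[N_k^{(n)}] = 1/k$ for every $1 \le k \le n$, which is the exact analogue for stick-breaking compositions of the classical fact that a uniform random permutation of $[n]$ has on average $1/k$ cycles of length $k$.

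First I would record the self-similarity of the process: conditionally on $S_n(0) = j$, either $j = n$, in which case $\kappa_n = 0$ and there is no further part, or $j < n$, in which case $(S_n(1), \dots, S_n(\kappa_n))$ is distributed as a discrete stick-breaking process $\mathbf{S}_{n-j}$. Since $S_n(0)$ is uniform on $[n]$, decomposing $N_k^{(n)}$ into the contribution $\mathbf{1}_{\{S_n(0) = k\}}$ of the first part plus the count among the remaining parts, and then conditioning on $S_n(0)$, gives for $k \le n$, with the convention $N_k^{(m)} = 0$ for $m < k$,
\begin{equation*}
\E\big[N_k^{(n)}\big] = \frac{1}{n}\sum_{j=1}^{n}\Big(\mathbf{1}_{\{j=k\}} + \E\big[N_k^{(n-j)}\big]\Big) = \frac{1}{n} + \frac{1}{n}\sum_{m=k}^{n-1}\E\big[N_k^{(m)}\big] .
\end{equation*}
I would then establish $\E[N_k^{(n)}] = 1/k$ by induction on $n$ (for fixed $k$, the base case being $n = k$, where the sum is empty and the identity reads $\E[N_k^{(k)}] = 1/k$): if the identity holds for all $k \le m \le n-1$, the displayed recursion gives $n\,\E[N_k^{(n)}] = 1 + (n-k)/k = n/k$. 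Combining with the first step, $\P(\tilde{S}_n = k) = (k/n)(1/k) = 1/n$ for all $1 \le k \le n$.

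There is no serious obstacle here; the only points that need care are the precise reading of ``size-biased pick'' (an index is chosen proportionally to its value, not uniformly among distinct values) and the self-similarity of $\mathbf{S}_n$ under conditioning on its first part, after which everything reduces to the one-line recursion above. As an aside that explains the label of the lemma, one can bypass the induction altogether via the classical coupling between stick-breaking compositions of $[n]$ and uniform random permutations of $[n]$: the successive parts of $\mathbf{S}_n$ are the lengths of the cycles, listed in the order they are discovered when one repeatedly follows the permutation starting from the largest not-yet-visited element, each being uniform on the number of remaining elements; then $N_k^{(n)}$ is the number of $k$-cycles, whose expectation is well known to be $1/k$, and a size-biased part is the length of the cycle containing a uniformly chosen element of $[n]$, which a direct count of the permutations fixing that element's cycle length shows to be uniform on $[n]$. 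Either route yields the lemma.
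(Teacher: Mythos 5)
Your proof is correct, and after the common first reduction $\P(\tilde{S}_n = k) = (k/n)\,\E[N_k^{(n)}]$ it takes a genuinely different route from the paper. The paper identifies $\mathbf{S}_n$ with the cycle-length sequence of a uniform random permutation of $[n]$ via the Chinese restaurant bijection with recursive trees of size $n+1$, and then simply quotes the classical fact that the expected number of $k$-cycles is $1/k$. You instead prove $\E[N_k^{(n)}] = 1/k$ from scratch by exploiting the Markovian self-similarity of the stick-breaking process: conditioning on the first part $S_n(0) = j$ leaves a fresh stick-breaking process on $[n-j]$, which gives the recursion $n\,\E[N_k^{(n)}] = 1 + \sum_{m=k}^{n-1}\E[N_k^{(m)}]$, solved immediately by induction. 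Your argument is more elementary and self-contained (it never leaves the stick-breaking world), at the small cost of an explicit induction; the paper's argument is shorter but imports the permutation bijection and the cycle-count formula as black boxes, and it also fits the paper's broader use of that bijection. Both are sound; the one point you correctly flag as needing care --- that a size-biased pick chooses an index with probability proportional to its value, so repeated values must be counted with multiplicity, whence the factor $N_k^{(n)}$ --- is handled the same way in the paper. Your aside sketching the permutation route is essentially the paper's proof, so you have in fact recovered both arguments.
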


\begin{proof}
As we have seen, $\mathbf{S}_n$ is distributed as the sizes of the subtrees of $\t_{n+1}$ after removing the root and its adjacent edges. Furthermore, as there are $n!$ recursive trees of size $n+1$, the latter are in bijection with permutations of $[n]$. Indeed, there is an explicit bijection between uniform random recursive trees of size $n+1$ and uniform random permutation of $[n]$, via the chinese restaurant process, see e.g. Goldschmidt and Martin \cite{Goldschmidt_Martin-Random_recursive_trees_and_the_Bolthausen-Sznitman_coalescent}, in which the vertex-sets of the subtrees of $\t_{n+1}$ after removing the root and its adjacent edges are exactly the cycles of the permutation. Then, for every $k \in [n]$,
\begin{equation*}
\P(\tilde{S}_n = k) = \sum_{i \ge 0} \frac{k}{n} \P(S_n(i) = k) = \frac{k}{n} \E[\Card\{i \ge 0 : S_n(i) = k\}],
\end{equation*}
and $\Card\{i \ge 0 : S_n(i) = k\}$ is distributed as the number of cycles of length $k$ in a uniform random permutation of $[n]$. It is well-known that the expectation of the latter is given by $1/k$ and the proof is complete.
\end{proof}

We end this section with three corollaries of Theorem \ref{thm:probabilite_site_uniforme_et_racine_dans_le_meme_arbre_ignifuge}. Denote by $f_{n,1}^\downarrow$ the size of the largest fireproof subtree of $\tn$.

\begin{cor}\label{cor:composantes_ignifugees_geantes_cas_sur_critique}
In the supercritical regime $p_n \ll \ln n / n$, we have
\begin{equation*}
n^{-1} f_{n,1}^\downarrow \cv 1
\quad\text{in probability.}
\end{equation*}
\end{cor}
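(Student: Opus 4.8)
The plan is to read off the corollary directly from Theorem \ref{thm:probabilite_site_uniforme_et_racine_dans_le_meme_arbre_ignifuge} applied with $c = 0$. In the supercritical regime $p_n \ll \ln n/n$ one has $n p_n/\ln n \to 0$, so that theorem tells us that a uniform random vertex $X_n$, taken independent of $\tn$ and the fire dynamics, belongs to the same fireproof subtree as the root $1$ with probability tending to $\ex^{0} = 1$.

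The next step is to reinterpret this probability as an expected mass. Let $F_n$ be the number of vertices of the fireproof component containing the root, with the convention $F_n = 0$ when the root burns. Conditionally on $\tn$ and the dynamics, the event that $X_n$ and $1$ lie in the same fireproof subtree has probability $F_n/n$, since $X_n$ is uniform and independent (and both sides vanish on the event that the root burns); averaging gives $\E[n^{-1} F_n] \to 1$. Because $0 \le n^{-1} F_n \le 1$, this forces $n^{-1} F_n \to 1$ in probability: indeed $1 - n^{-1} F_n \ge 0$ and $\E[1 - n^{-1} F_n] \to 0$, so Markov's inequality yields $\P(1 - n^{-1} F_n > \varepsilon) \to 0$ for every $\varepsilon > 0$ (in fact the convergence holds in $L^1$).

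To conclude, note that the largest fireproof component is at least as large as the root's component and no larger than the whole tree, i.e. $F_n \le f_{n,1}^\downarrow \le n$. Dividing by $n$ and sandwiching, $n^{-1} f_{n,1}^\downarrow$ converges to $1$ in probability, which is Corollary \ref{cor:composantes_ignifugees_geantes_cas_sur_critique}. There is no real obstacle here; the only point that deserves care is the identity $\P(X_n \text{ and } 1 \text{ are in the same fireproof subtree}) = \E[n^{-1} F_n]$, which rests precisely on the independence of $X_n$ from $\tn$ and the dynamics assumed in Theorem \ref{thm:probabilite_site_uniforme_et_racine_dans_le_meme_arbre_ignifuge}.
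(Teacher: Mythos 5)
Your proof is correct, and it takes a genuinely different route from the paper's. Both arguments start from Theorem~\ref{thm:probabilite_site_uniforme_et_racine_dans_le_meme_arbre_ignifuge} with $c=0$, but they convert that input into the corollary differently. The paper samples two independent uniform vertices $X_n, X_n'$, bounds $\E[n^{-1}f_{n,1}^\downarrow]$ below by the second moment $\E[n^{-2}\sum_i (f_{n,i}^\downarrow)^2]$, identifies this with $\P(X_n, X_n'\text{ in same fireproof component})$, and then uses a Bonferroni-type bound to reduce to the probability from the theorem; this is a self-contained moment argument that never needs to single out the root's own component. You instead introduce the size $F_n$ of the root's fireproof component (set to $0$ if the root burns), observe the exact identity $\P(X_n\text{ and }1\text{ in same fireproof subtree})=\E[n^{-1}F_n]$, deduce $n^{-1}F_n\to 1$ in probability from boundedness and convergence of the mean, and conclude via the deterministic sandwich $F_n\le f_{n,1}^\downarrow\le n$. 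Your route is shorter and bypasses both the second-moment inequality and the two-point Bonferroni step, at the price of explicitly tracking the root component; the paper's route is marginally more robust in style (it would adapt verbatim to situations without a distinguished vertex), but for this statement yours is arguably the cleaner argument.
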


\begin{proof}
Let $f_{n,1}^\downarrow \ge f_{n,2}^\downarrow \ge \dots \ge 0$ be the sizes of the fireproof subtrees of $\tn$, ranked in non-increasing order. Let also $X_n$ and $X_n'$ be two independent uniform random vertices in $[n]$, independent of the fire dynamics. Since $\sum_{i \ge 1} f_{n,i}^\downarrow \le n$, we have
\begin{align*}
\E[n^{-1} f_{n,1}^\downarrow]
&\ge \E\bigg[n^{-2} \sum_{i \ge 1} (f_{n,i}^\downarrow)^2\bigg]
\\
&= \P(X_n \text{ and } X_n' \text{ belong to the same fireproof component})
\\
&\ge \P(X_n, X_n' \text{ and } 1 \text{ belong to the same fireproof component})
\\
&\ge 2\P(X_n \text{ and } 1 \text{ belong to the same fireproof component})-1,
\end{align*}
and the latter converges to $1$ as $n \to \infty$ from Theorem \ref{thm:probabilite_site_uniforme_et_racine_dans_le_meme_arbre_ignifuge}. We conclude that $n^{-1} f_{n,1}^\downarrow$ converges to $1$ in probability.
\end{proof}

This further yields the following result for the subcritical regime.

\begin{cor}\label{cor:composantes_ignifugees_grandes_cas_sous_critique}
In the subcritical regime $1 \gg p_n \gg \ln n / n$, the sequence $(p_n f_{n,1}^\downarrow, n \ge 1)$ is tight.
\end{cor}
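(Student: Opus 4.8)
The plan is to bound $f_{n,1}^\downarrow$ by means of the decomposition of $\tn$ along the spine of its cut-tree, exactly as in the proofs of Lemma~\ref{lem:racine_brule_avec_grande_proba_cas_sous_critique} and Theorem~\ref{thm:limite_nombre_sites_ignifuges_cas_sous_critique}. Recall that with high probability there is a mark on the path from $[n]$ to $\{1\}$ in $\Cut(\tn)$, at a height $\sigma(n)\le\lambda(n)$, and that $\sigma(n)$ is distributed as $g_n\wedge\zeta(n)$ for a geometric variable $g_n$ of parameter $p_n$ that is independent of the tree, so in particular $\sigma(n)\le g_n$ with $\E[g_n]\le 1/p_n$. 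On the above event the block $C_{n,\sigma(n)}$ is a completely burnt subtree, so every fireproof vertex of $\tn$ lies in one of the blocks $C_{n,1}',\dots,C_{n,\sigma(n)}'$; by the coupling of Lemma~\ref{lem:couplage_Iksanov_Moehle} (using $\sigma(n)\le\lambda(n)$) their sizes are $|C_{n,i}'|=\xi_i$, i.i.d.\ copies of $\xi$ with $\P(\xi>t)\le 1/t$, and by Lemma~\ref{lem:propriete_fractale} the fire dynamics run on them independently.

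The key geometric point I would establish is that, on this event, $f_{n,1}^\downarrow\le\max_{i\le\sigma(n)}\xi_i$. Each fireproof component $K$ is a connected subtree made of fireproof vertices, hence has a unique vertex $v^\ast$ closest to the root $1$; since $v^\ast$ is fireproof it lies in some $C_{n,i}'$, and since $K$ is contained in the subtree of $\tn$ hanging below $v^\ast$, it stays inside the block $C_{n,i}'$, so $|K|\le\xi_i$. The one subtlety here is that the edges joining consecutive spine blocks are themselves fireproof, so a priori a fireproof component could straddle two blocks; to rule this out one notes that $K\cap C_{n,j}'$ is a connected set of fireproof vertices inside the random recursive tree $\t_{n,j}'$, and that the vertices through which distinct blocks are attached lie in blocks of size of order $n$, on which the dynamics are strongly subcritical, so the density of fireproof vertices there is $O(\ln(1/p_n)/(p_n n))=o(1)$ and with high probability those attaching vertices are burnt — this is the step where most of the work goes, and it mirrors the arguments in the proof of Theorem~\ref{thm:limite_nombre_sites_ignifuges_cas_sous_critique}. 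Either way one arrives at $f_{n,1}^\downarrow\le\max_{i\le\sigma(n)}\xi_i$ with high probability.

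It then only remains to estimate the heavy-tailed maximum $\max_{i\le\sigma(n)}\xi_i\le\max_{i\le g_n}\xi_i$. Since $g_n$ is independent of the $\xi_i$'s, a union bound together with $1-(1-x)^{g_n}\le g_n x$ gives, for every $M>0$ and every $n$,
\begin{equation*}
\P\Big(\max_{i\le\sigma(n)}\xi_i>M/p_n\Big)\le\E[g_n]\,\P(\xi>M/p_n)\le\frac{1}{p_n}\cdot\frac{p_n}{M}=\frac1M,
\end{equation*}
whence $\limsup_{n\to\infty}\P(p_nf_{n,1}^\downarrow>M)\le 1/M$, which tends to $0$ as $M\to\infty$. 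Combined with the trivial bound $p_nf_{n,1}^\downarrow\le n$, which takes care of any finite range of $n$, this shows that $(p_nf_{n,1}^\downarrow)_{n\ge 1}$ is tight. The main obstacle is thus not the final computation, which is a routine heavy-tail estimate, but the geometric claim that fireproof components essentially do not spread across two spine blocks; everything else is immediate from the already-established structure of $\Cut(\tn)$.
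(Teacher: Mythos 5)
Your overall decomposition mirrors the paper's proof: condition on the root burning, observe that all fireproof vertices then lie in the spine subtrees $\t_{n,1}',\dots,\t_{n,\sigma(n)}'$ whose sizes are $\xi_1,\dots,\xi_{\sigma(n)}$ by the Iksanov--M\"ohle coupling of Lemma~\ref{lem:couplage_Iksanov_Moehle}, and bound the largest of these sizes. Your heavy-tail union bound $\P(\max_{i\le g_n}\xi_i>M/p_n)\le\E[g_n]\,\P(\xi>M/p_n)\le 1/M$ is a clean, slightly more elementary substitute for the paper's route through the convergence of $\sum_{i\le\sigma(n)}\delta_{p_n\xi_i}$ to a Poisson random measure; both deliver the upper bound needed for tightness (the paper also records a matching lower bound, which the corollary does not require).

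The problematic step is, as you anticipate, the geometric claim $f_{n,1}^\downarrow\le\max_{i\le\sigma(n)}\xi_i$, and the two arguments you sketch for it do not survive scrutiny. The deduction ``$K$ is contained in the subtree of $\tn$ hanging below $v^\ast$, hence stays inside $C_{n,i}'$'' is false: $C_{n,i}'$ is not the full subtree $\tn(u_i)$ below the block's root $u_i$, but only $\tn(u_i)\cap C_{n,i-1}$, and every earlier block $C_{n,l}'$ with $l<i$ whose attaching vertex $w_l$ happens to lie in $\tn(u_i)$ is a subset of $\tn(u_i)$ yet disjoint from $C_{n,i}'$. Your proposed fix for the resulting straddling is also off: the attaching vertex $w_l$ does not always lie in a block of size of order $n$. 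It lands in the burnt block $C_{n,\sigma(n)}$ with probability roughly $1-S_{\sigma(n)}/n$, but otherwise it lands in some later small spine block $C_{n,i}'$ with $l<i\le\sigma(n)$, and on the event $\max_i p_n\xi_i\le M$ every such block is supercritical rather than strongly subcritical (indeed $\xi_ip_n/\ln\xi_i\le M/\ln(M/p_n)\to 0$), so the density of fireproof vertices there is close to one, not $o(1)$. Thus fireproof components genuinely can straddle spine blocks, and one must argue separately that the merged components obtained by following the attachment map $l\mapsto i$ still have total mass $O(1/p_n)$. To be fair, the published proof is also terse at exactly this point, asserting the ``smaller than $M/p_n$'' bound without addressing straddling; but the density heuristic you put forward to close that gap points in the wrong direction.
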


\begin{proof}
With the notations of the proof of Theorem \ref{thm:limite_nombre_sites_ignifuges_cas_sous_critique}, the root burns with high probability, so we implicitly condition on this event, and the number of edges fireproof in the root-component is given by $\sigma(n)$ which, rescaled by a factor $p_n$, converges in distribution towards $\e_1$. Then the argument of Lemma \ref{lem:proprietes_marche_aleatoire_limite_mesure} entails the joint convergence in distribution of the pair
\begin{equation*}
\bigg(p_n \sigma(n), \sum_{i=1}^{\sigma(n)} \delta_{p_n |\t_{n,i}'|}(\d x)\bigg)
\end{equation*}
towards $\e_1$ and a Poisson random measure with intensity $\e_1 x^{-2} \d x$ on $(0, \infty)$. In particular, for every $\varepsilon \in (0, 1)$, there exist two constants, say, $m$ and $M$, for which
\begin{equation*}
\P\bigg(m \le \max_{1 \le i \le \sigma(n)} p_n |\t_{n,i}'| \le M\bigg) > 1-\varepsilon,
\end{equation*}
for every $n$ large enough. Observe that
\begin{equation*}
p_n \frac{M/p_n}{\ln(M/p_n)} \cv 0,
\end{equation*}
and therefore a subtree which satisfies $m \le p_n |\t_{n,i}'| \le M$ is supercritical. It then follows from Corollary \ref{cor:composantes_ignifugees_geantes_cas_sur_critique} that such a subtree contains a fireproof component larger than $(1-\varepsilon)m/p_n$ (and smaller that $M/p_n$) with high probability and the proof is complete.
\end{proof}

Finally, in the critical regime, the behavior resembles that of sub or supercritical, according to the final state of the root.

\begin{cor}\label{cor:composantes_ignifugees_grandes_cas_critique}
Consider the critical regime $p_n \sim c \ln n / n$. We distinguish two cases:
\begin{enumerate}
\item On the event that the root burns, the sequence $((\ln n) n^{-1} f_{n,1}^\downarrow, n \ge 1)$ is tight.

\item On the event that the root is fireproof, $n^{-1} f_{n,1}^\downarrow$ converges to $1$ in probability.
\end{enumerate}
\end{cor}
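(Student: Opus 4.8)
The plan is to treat the two cases separately, each by reduction to an earlier statement. \textbf{Case (ii)} I would obtain by following the proof of Corollary~\ref{cor:composantes_ignifugees_geantes_cas_sur_critique}. By Proposition~\ref{pro:composante_brulee_macro_contient_la_racine} the probability that the root of $\tn$ is fireproof converges to $\ex^{-c}$, and by Theorem~\ref{thm:probabilite_site_uniforme_et_racine_dans_le_meme_arbre_ignifuge} the probability that a uniform random vertex $X_n$ and the root belong to the same fireproof subtree also converges to $\ex^{-c}$. Since the latter event entails that the root is fireproof, conditionally on the root being fireproof $X_n$ lies in the fireproof component $C(1)$ of the root with probability tending to $1$; as $\P(X_n\in C(1)\mid\tn,\ \text{dynamics})=n^{-1}\Card C(1)$ and $\Card C(1)\le n$, this forces $n^{-1}\Card C(1)\to1$ in probability conditionally on the root being fireproof, hence $n^{-1}f_{n,1}^\downarrow\to1$ there as well, because $\Card C(1)\le f_{n,1}^\downarrow\le n$.

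\textbf{Case (i)} I would argue throughout on the event $A_n$. Recall from the proof of Theorem~\ref{thm:limite_nombre_sites_ignifuges_cas_sous_critique} that on $A_n$ the branch of $\Cut(\tn)$ from $[n]$ to $\{1\}$ carries a single mark, at height $\sigma(n)$, that the burnt component of the root is the block $B\coloneqq C_{n,\sigma(n)}$, and that, using Lemma~\ref{lem:couplage_Iksanov_Moehle} and Proposition~\ref{pro:composante_brulee_macro_contient_la_racine}, up to an event of arbitrarily small probability one may assume both $\sigma(n)\le\lambda(n)$ and $n^{-1}\Card B=1-n^{-1}S_{\sigma(n)}$ bounded below by a positive constant (the caveat being to truncate the exponential limit $\e_c$ of $n^{-1}S_{\sigma(n)}$ away from $1$). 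The first, purely deterministic, step is the observation that, $B$ being a connected subtree of $\tn$ containing the root and made only of burnt vertices, every fireproof component of $\tn$ is contained in a single connected component of the forest $\tn\setminus B$, and each such component is the subtree of $\tn$ stemming from the unique vertex $\tau\notin B$ whose parent lies in $B$. Hence $f_{n,1}^\downarrow$ is at most the largest size of such a subtree, and it suffices to bound that by $O(n/\ln n)$ in probability on $A_n$.

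For this I would invoke the usual description of $B$: by the cut-tree construction together with the splitting property (Lemma~\ref{lem:propriete_fractale}), $B=C_{n,\sigma(n)}$ arises from $\tn$ by $\sigma(n)$ successive prunings, the $i$-th one deleting the subtree hanging from a vertex $b_i$ picked uniformly among the non-root vertices of the current random recursive tree $C_{n,i-1}$, and each connected component of $\tn\setminus B$ is the subtree of $\tn$ stemming from one of the $b_i$. Thus $f_{n,1}^\downarrow\le\max_{1\le i\le\sigma(n)}\bigl|\{w:\ w=b_i\ \text{or}\ w\ \text{descends from}\ b_i\}\bigr|$, where $\sigma(n)\le\lambda(n)$ is of order $n/\ln n$ by Lemma~\ref{lem:proprietes_marche_aleatoire_limite_lambda}. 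Since in a random recursive tree on $n$ vertices the subtree rooted at the vertex labelled $\ell$ has size exceeding $t$ with probability $\binom{n-t-1}{\ell-1}/\binom{n-1}{\ell-1}\le(1-(\ell-1)/n)^{t}$, summing over $\ell$ shows that the subtree of a uniformly chosen vertex has size $>t$ with probability $O(1/t)$. As, conditionally on $\tn$ and on $b_1,\dots,b_{i-1}$, the vertex $b_i$ is uniform on $C_{n,i-1}\setminus\{1\}$ while $\Card C_{n,i-1}\ge\Card B$ remains above a fixed fraction of $n$, a union bound over $i\le\sigma(n)$ gives $\P\bigl(\max_{i\le\sigma(n)}|\text{subtree}(b_i)|>Mn/\ln n,\ A_n\bigr)\le O(1)/M+o(1)$, hence the claimed tightness. (Exactly as in Corollary~\ref{cor:composantes_ignifugees_grandes_cas_sous_critique} one even obtains the matching lower bound of order $n/\ln n$: by Lemma~\ref{lem:proprietes_marche_aleatoire_limite_mesure} the largest off-spine subtrees $\t'_{n,i}$ are of size of order $n/\ln n$ and supercritical -- $p_n$ being negligible with respect to $\ln k/k$ for such $k$ -- so by Corollary~\ref{cor:composantes_ignifugees_geantes_cas_sur_critique} each hosts a fireproof component of that order; only the upper bound enters the statement, though.)

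The step I expect to require the most care is this final union bound: the pruned roots $b_i$ are not i.i.d.\ uniform and $\sigma(n)$ is random, so one must control the size of the current tree $C_{n,i-1}$ -- equivalently the undershoot $n-S_{i-1}$ -- using Lemma~\ref{lem:proprietes_marche_aleatoire}, and keep $\Card B$ of order $n$ by first conditioning $\e_c$ to be bounded away from $1$. The remaining verifications are routine.
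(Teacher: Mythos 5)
Case (ii) of your proposal reproduces the paper's argument exactly, combining Proposition~\ref{pro:composante_brulee_macro_contient_la_racine} with Theorem~\ref{thm:probabilite_site_uniforme_et_racine_dans_le_meme_arbre_ignifuge} via the proof of Corollary~\ref{cor:composantes_ignifugees_geantes_cas_sur_critique}; nothing to add there.

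For case (i), your route differs from the paper's, and the difference is worth spelling out. The paper says only that ``the rest of the proof follows verbatim from that of Corollary~\ref{cor:composantes_ignifugees_grandes_cas_sous_critique},'' whose argument runs through Lemma~\ref{lem:proprietes_marche_aleatoire_limite_mesure}: via the Iksanov--M\"ohle coupling, the off-spine block sizes $|\t_{n,i}'|=\xi_i$ are i.i.d.\ and their rescaled point process converges, so $\max_{i\le\sigma(n)}p_n|\t_{n,i}'|$ concentrates in $[m,M]$, and then Corollary~\ref{cor:composantes_ignifugees_geantes_cas_sur_critique} is invoked to exhibit a fireproof component of size $\gtrsim m/p_n$ inside the largest block. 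Note that this last step is a \emph{lower} bound on $f_{n,1}^\downarrow$; tightness of the nonnegative sequence is an \emph{upper} bound, which is precisely what your argument supplies. Your decomposition is also sharper on a subtle point: a fireproof component need not live in a single $\t_{n,i}'$. The boundary edge $\{p(\tau_j),\tau_j\}$ between two blocks is fireproof whenever $j\le\sigma(n)$, so if both endpoints end up fireproof the components on either side of it are glued. What is true is exactly what you state: $B=C_{n,\sigma(n)}$ is connected and contains the root, hence it is ancestor-closed, so the components of $\tn\setminus B$ are the full subtrees $\tn(\tau)$ hanging from the boundary vertices $\tau$, and every fireproof component lies inside one of them. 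Since $\tn(\tau)\supseteq C_{n,i}'$ when $\tau=b_i$, your quantity $\max_i|\tn(b_i)|$ dominates $\max_i|\t_{n,i}'|$, and it is the one that genuinely bounds $f_{n,1}^\downarrow$ from above. The tail estimate $\P(|\tn(U)|>t)=O(1/t)$ for a uniform vertex $U$ and the union bound over the $\sigma(n)\lesssim n/\ln n$ prunings then give tightness of $(\ln n)n^{-1}f_{n,1}^\downarrow$ on $A_n$; the bookkeeping you flag (the $b_i$ are conditionally uniform on $C_{n,i-1}\setminus\{1\}$ rather than on $[n]$, $\sigma(n)$ is random, and one must keep $|C_{n,i-1}|\ge\delta n$) is real but is resolved exactly as you propose, by first conditioning $n^{-1}|B|$ away from $0$ via Proposition~\ref{pro:composante_brulee_macro_contient_la_racine} and $(\ln n/n)\sigma(n)$ below a constant, then bounding each conditional probability by $\E[\Card\{v:|\tn(v)|>t\}]/(\delta n)$. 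So your argument is correct and, for the upper bound, more direct and more elementary than the route the paper points to.
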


\begin{proof}
For the first statement, on the event that the root burns, the number $\sigma(n)$ of edges fireproof in the root-component, rescaled by a factor $\ln n / n$ converges in distribution towards an exponential random variable with rate $c$ conditioned to be smaller than $1$. The rest of the proof follows verbatim from that of Corollary \ref{cor:composantes_ignifugees_grandes_cas_sous_critique} above.

For the second statement, we already proved in Proposition \ref{pro:composante_brulee_macro_contient_la_racine} that the probability that the root of $\tn$ is fireproof converges to $\ex^{-c}$ as $n \to \infty$. Thus, on this event, the probability that the root and an independent uniform random vertex belong to the same fireproof subtree converges to $1$ and the claim follows from the proof of Corollary \ref{cor:composantes_ignifugees_geantes_cas_sur_critique}.
\end{proof}

%%%%%%%%%%%%%%%%%%%%%%%%%%%%%%%%%%%%%%%%%%%%%%%%%%%%%%%%
%%%%%%%%%%%%%%%%%%%%%%%%%%% SECTION 5 %%%%%%%%%%%%%%%%%%%%
%%%%%%%%%%%%%%%%%%%%%%%%%%%%%%%%%%%%%%%%%%%%%%%%%%%%%%%%
\section{On the sequence of burnt subtrees}\label{section5}

In the last section, we prove Theorem \ref{thm:limite_premiers_arbres_brules_cas_critique} which describes the asymptotic behavior of the burnt subtrees, in the critical regime $p_n \sim c \ln n / n$. One of the claims is that the probability that the root burns with the $j$-th fire converges to
\begin{equation*}
\E\bigg[\ex^{-\gamma_j} \prod_{i=1}^{j-1} (1 - \ex^{-\gamma_i})\bigg],
\end{equation*}
where $\gamma_0 = 0$ and $(\gamma_j - \gamma_{j-1})_{j \ge 1}$ is a sequence of i.i.d. exponential random variables with rate $c$. We can compute this expectation by writing $\E[\ex^{-\gamma_j} \prod_{i=1}^{j-1} (1 - \ex^{-\gamma_i})] = q_{j-1}-q_j$, where for every $j \ge 0$,
\begin{align*}
q_j
&= \E\bigg[\prod_{i=1}^j (1 - \ex^{-\gamma_i})\bigg]
\\
&= \E\bigg[\sum_{i=0}^j (-1)^i \sum_{1\le \ell_1 < \dots < \ell_i \le j} \ex^{-\gamma_{\ell_1}} \cdots \ex^{-\gamma_{\ell_i}}\bigg]
\\
&= \sum_{i=0}^j (-1)^i \sum_{1\le \ell_1 < \dots < \ell_i \le j} \E[\ex^{-i \gamma_{\ell_1}} \ex^{-(i-1) (\gamma_{\ell_2}-\gamma_{\ell_1})} \cdots \ex^{-(\gamma_{\ell_i} - \gamma_{\ell_{i-1}})}]
\\
&= \sum_{i=0}^j (-1)^i \sum_{1\le \ell_1 < \dots < \ell_i \le j} \bigg(\frac{c}{c+i}\bigg)^{\ell_1} \bigg(\frac{c}{c+i-1}\bigg)^{\ell_2-\ell_1} \cdots \bigg(\frac{c}{c+1}\bigg)^{\ell_i-\ell_{i-1}}.
%\\
%&= \sum_{i=0}^j (-1)^i \sum_{\ell_1 = 1}^{j-i+1} \bigg(\frac{c}{c+i}\bigg)^{\ell_1} \sum_{\ell_2-\ell_1 = 1}^{j-\ell_1-i+2} \bigg(\frac{c}{c+i-1}\bigg)^{\ell_2-\ell_1} \cdots \sum_{\ell_i-\ell_{i-1} = 1}^{j-\ell_{i-1}} \bigg(\frac{c}{c+1}\bigg)^{\ell_i-\ell_{i-1}}
%\\
%%&= \sum_{i=0}^j (-1)^i \sum_{k_1 = 1}^{j-i+1} \bigg(\frac{c}{c+i}\bigg)^{k_1} \sum_{k_2 = 1}^{j-k_1-i+2} \bigg(\frac{c}{c+i-1}\bigg)^{k_2} \cdots \sum_{k_i = 1}^{j-(k_1+\dots+k_{i-1})} \bigg(\frac{c}{c+1}\bigg)^{k_i}
%%\\
%%&
%%\\
%&= \sum_{i=0}^j (-1)^i \frac{c}{c+i} \frac{c}{c+i-1} \cdots \frac{c}{c+1} \sum_{\ell_1 = 1}^{j} \bigg(\frac{c}{c+i}\bigg)^{\ell_1-1} \sum_{\ell_2-\ell_1 = 1}^{i-\ell_1} \bigg(\frac{c}{c+i-1}\bigg)^{\ell_2-\ell_1-1} \cdots \sum_{\ell_i-\ell_{i-1} = 1}^{i-\ell_{i-1}} \bigg(\frac{c}{c+1}\bigg)^{\ell_i-\ell_{i-1}-1}
%\\
%&= \sum_{i=0}^j (-1)^{i} \frac{c}{c+i} \frac{c}{c+i-1} \cdots \frac{c}{c+1} \sum_{n_1=0}^{i} \frac{c}{c+n_1} \sum_{n_2=0}^{n_1} \frac{c}{c+n_2} \dots \sum_{n_{j-i}=0}^{n_{j-i+1}} \frac{c}{c+n_{j-i}}.
\end{align*}

Before tackling the proof of Theorem \ref{thm:limite_premiers_arbres_brules_cas_critique}, let us give a consequence of this.

\begin{cor}
We have in the critical regime $p_n \sim c \ln n / n$:
\begin{equation*}
\lim_{k \to \infty} \lim_{n \to \infty} \P(\text{the root of } \tn \text{ is not burnt after } k \text{ fires})
= \lim_{n \to \infty} \P(\text{the root of } \tn \text{ is fireproof}),
\end{equation*}
and both are equal to $\ex^{-c}$.
\end{cor}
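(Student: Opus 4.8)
The plan is to reduce the statement to Theorem~\ref{thm:limite_premiers_arbres_brules_cas_critique}, to the telescoping structure of the $q_j$'s, and to a short Poisson computation. First I would note that the events $\{\text{the } j\text{-th fire burns the root}\}$, $j\ge1$, are pairwise disjoint and that their union is precisely the event that the root of $\tn$ burns; consequently, for every fixed $k$ and every $n$,
\begin{equation*}
\P(\text{the root of }\tn\text{ is not burnt after }k\text{ fires})
= 1 - \sum_{j=1}^{k} \P(\text{the root of }\tn\text{ burns with the }j\text{-th fire}),
\end{equation*}
where, if $\tn$ produces fewer than $k$ fires, the missing terms are simply $0$ so the identity still holds. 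Letting $n\to\infty$ in this finite sum and invoking Theorem~\ref{thm:limite_premiers_arbres_brules_cas_critique} together with the notation $q_j=\E[\prod_{i=1}^{j}(1-\ex^{-\gamma_i})]$ and $q_0=1$ introduced before the statement, the right-hand side converges, by telescoping, to $1-\sum_{j=1}^{k}(q_{j-1}-q_j)=q_k$.

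Second, I would let $k\to\infty$. The product $\prod_{i=1}^{k}(1-\ex^{-\gamma_i})$ is non-increasing in $k$ and bounded above by $1$, so monotone convergence yields $q_k\downarrow \E[\prod_{i=1}^{\infty}(1-\ex^{-\gamma_i})]$ (the one technical point to record). To evaluate this last expectation I would use that $(\gamma_j)_{j\ge1}$ is exactly the sequence of atoms of a Poisson point process $\Pi$ on $(0,\infty)$ with intensity $c\,\d x$, its increments being i.i.d.\ exponential with rate $c$, and apply the generating-functional identity for Poisson processes: for measurable $u$ with $0\le u\le1$ and $\int(1-u)\,\d\mu<\infty$ one has $\E[\prod_{x\in\Pi}u(x)]=\exp(-\int(1-u)\,\d\mu)$. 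With $u(x)=1-\ex^{-x}$ and $\mu(\d x)=c\,\d x$ this gives
\begin{equation*}
\E\Big[\prod_{i=1}^{\infty}(1-\ex^{-\gamma_i})\Big]
= \exp\Big(-c\int_0^\infty \ex^{-x}\,\d x\Big)
= \ex^{-c}.
\end{equation*}
Alternatively, and more self-containedly, one marks each $\gamma_i$ by an independent uniform variable $U_i$ on $[0,1]$, notes $1-\ex^{-\gamma_i}=\P(U_i>\ex^{-\gamma_i}\mid (\gamma_j)_j)$, and observes that the pairs $(\gamma_i,U_i)$ form a Poisson process on $(0,\infty)\times[0,1]$ of intensity $c\,\d x\,\d u$ whose number of points lying below the graph of $x\mapsto\ex^{-x}$, a region of area $\int_0^\infty\ex^{-x}\,\d x=1$, is Poisson with parameter $c$; the probability that this count is $0$ is $\ex^{-c}$.

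Putting the two steps together gives $\lim_{k\to\infty}\lim_{n\to\infty}\P(\text{the root is not burnt after }k\text{ fires})=\lim_{k\to\infty}q_k=\ex^{-c}$, whereas $\lim_{n\to\infty}\P(\text{the root of }\tn\text{ is fireproof})=\ex^{-c}$ is precisely the content of Proposition~\ref{pro:composante_brulee_macro_contient_la_racine}, and the corollary follows. I do not anticipate a genuine obstacle: the result is a formal consequence of those two inputs. The only place asking for a little care is the identification $\lim_{k\to\infty}q_k=\ex^{-c}$; computing it by summing the explicit series for $q_j$ displayed above would be unpleasant, so the Poisson-process route is the efficient one, with the monotone-convergence argument as the single routine verification.
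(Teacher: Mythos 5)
Your proof is correct and, for the key limit $\lim_{k\to\infty}q_k=\ex^{-c}$, takes a genuinely different route from the paper's. The reduction to Theorem~\ref{thm:limite_premiers_arbres_brules_cas_critique} and Proposition~\ref{pro:composante_brulee_macro_contient_la_racine} is the same in spirit — the paper simply reads off the limit $q_k$ directly from the theorem without spelling out your disjoint-events/telescoping decomposition, which is a fine and perhaps clearer way to justify it. Where you diverge is in evaluating $\E[\prod_{i\ge1}(1-\ex^{-\gamma_i})]$: the paper expands this as an alternating series over finite subsets, sums the geometric series to get $\sum_{i\ge0}(-1)^i c^i/i!$, and recognizes $\ex^{-c}$. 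You instead observe that $(\gamma_i)_{i\ge1}$ is a rate-$c$ Poisson process on $(0,\infty)$ and invoke the probability generating functional $\E[\prod_{x\in\Pi}u(x)]=\exp(-\int(1-u)\,\d\mu)$ with $u(x)=1-\ex^{-x}$, or equivalently the marking/void-probability argument; this is shorter, more structural, and avoids the combinatorial bookkeeping of the paper's series manipulation (it also makes transparent why $\ex^{-c}$ appears: it is the void probability of a Poisson variable with mean $c\int_0^\infty\ex^{-x}\,\d x=c$). One cosmetic nit: the convergence $q_k\downarrow\E[\prod_{i\ge1}(1-\ex^{-\gamma_i})]$ is really dominated convergence (or monotone convergence applied to $1-\prod_{i\le k}(1-\ex^{-\gamma_i})$), not monotone convergence applied to the decreasing products directly — but the conclusion is of course unaffected.
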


In particular, we see that for every $\varepsilon > 0$, there exists $k \in \N$ such that
\begin{equation*}
\liminf_{n \to \infty} \P(\text{the root of } \tn \text{ is fireproof} \mid \text{it is not burnt after } k \text{ fires}) > 1-\varepsilon.
\end{equation*}
In words, we can find $k$ large enough but independent of $n$, such that if the root of $\tn$ has not burnt after the first $k$ fires, then with high probability, it will be fireproof at the end of the dynamics.

\begin{proof}
Observe from Theorem \ref{thm:limite_premiers_arbres_brules_cas_critique} that for every $k \in \N$, the probability that the root of $\tn$ is not burnt after $k$ fires converges as $n \to \infty$ towards $q_k = \E[\prod_{i = 1}^k (1 - \ex^{-\gamma_i})]$, which in turn, thanks to the previous computation, converges as $k \to \infty$ towards
\begin{align*}
\E\bigg[\prod_{i = 1}^\infty (1 - \ex^{-\gamma_i})\bigg]
&= \sum_{i=0}^\infty (-1)^i \sum_{1\le \ell_1 < \dots < \ell_i} \bigg(\frac{c}{c+i}\bigg)^{\ell_1} \bigg(\frac{c}{c+i-1}\bigg)^{\ell_2-\ell_1} \cdots \bigg(\frac{c}{c+1}\bigg)^{\ell_i-\ell_{i-1}}
\\
&= \sum_{i=0}^\infty (-1)^i \frac{c}{i} \frac{c}{i-1} \cdots \frac{c}{1}
\\
&= \ex^{-c}.
\end{align*}
Finally, as we already mentioned, it follows from Proposition \ref{pro:composante_brulee_macro_contient_la_racine} that, as $n \to \infty$, the probability that the root of $\tn$ is fireproof converges to $\ex^{-c}$ as well.
\end{proof}

In order to prove Theorem \ref{thm:limite_premiers_arbres_brules_cas_critique}, we shall need the following three results on random recursive trees. Let $E_n$ be the set of all edges of $\tn$ which are not adjacent to the root. Fix $k \in \N$, consider a random recursive tree $\tn$ and a simple random sample $(e_{n,1}, \dots, e_{n,k})$ of $k$ edges from $E_n$. For each $i = 1, \dots, k$, denote by $v_{n,i}$ and $v_{n,i}'$ the two extremities of $e_{n,i}$ with the convention that $v_{n,i}$ is the closest one to the root. Finally, denote by $\rho_{n,k} \coloneqq \max_{1 \le i, j \le k} h(v_{n, i} \wedge v_{n, j})$, where $a \wedge b$ denotes the last common ancestor of $a$ and $b$ in $\tn$ and $h(a)$ the height of $a$ in $\tn$. Then in the complement of the ball centered at the root and of radius $\rho_{n,k}$, the paths from $1$ to $v_{n,i}$, $i = 1, \dots, k$ are disjoint.

\begin{lem}\label{lem:hauteurs_sites_internes_uniformes}
For every $i \in \N$ fixed, we have
\begin{equation*}
\frac{1}{\ln n} h(v_{n,i}) \cv 1
\quad\text{in probability.}
\end{equation*}
\end{lem}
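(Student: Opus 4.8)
The plan is to reduce the statement to the well-known fact, already recalled in the proof of Theorem~\ref{thm:probabilite_site_uniforme_et_racine_dans_le_meme_arbre_ignifuge}, that the depth $h(X_n)$ of a uniformly chosen vertex $X_n \in [n]$ of $\tn$ satisfies $h(X_n)/\ln n \to 1$ in probability. First, by exchangeability of a simple random sample, the edge $e_{n,i}$ is marginally uniform on $E_n$. The edges of $\tn$ are exactly the pairs $\{u_j, j\}$ for $j \in \{2, \dots, n\}$, where $u_j$ denotes the parent of $j$, and they are pairwise distinct; hence a uniform edge of $\tn$ is $\{u_J, J\}$ for $J$ uniform on $\{2, \dots, n\}$ and independent of $\tn$, and this edge belongs to $E_n$ precisely when $u_J \ne 1$. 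Since $v_{n,i}$ is the endpoint of $e_{n,i}$ closer to the root, we get the deterministic identity $h(v_{n,i}) = h(u_J) = h(J) - 1$, with $J$ now conditioned on $\{u_J \ne 1\}$.

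Next I would remove this conditioning. We have $\P(u_J = 1) = (n-1)^{-1}\sum_{j=2}^{n}\P(u_j = 1) = (n-1)^{-1}\sum_{j=2}^{n}(j-1)^{-1} \to 0$. Moreover $h(u_J) = 0$ on the event $\{u_J = 1\}$, so for every $x \ge 0$ one has $\{h(u_J) > x\} \subseteq \{u_J \ne 1\}$, whence $\P(h(v_{n,i}) > x) = \P(h(u_J) > x)/\P(u_J \ne 1)$. Since $\P(u_J \ne 1) \to 1$, the random variables $h(v_{n,i})$ and $h(u_J) = h(J) - 1$ (with $J$ unconditioned, uniform on $\{2, \dots, n\}$) have the same asymptotic behaviour. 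Finally, because the root has height $0$, the same elementary comparison gives $\P(h(X_n) > x) = \tfrac{n-1}{n}\P(h(J) > x)$ for $x \ge 0$, so $h(J)$ behaves asymptotically like $h(X_n)$; dividing by $\ln n$ and using $1/\ln n \to 0$ yields $h(v_{n,i})/\ln n \to 1$ in probability.

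I do not expect any real obstacle here: the only thing to watch is the bookkeeping showing that restricting to the edges of $E_n$ and passing between the two endpoint labels are asymptotically harmless, which works out cleanly since the discarded events have vanishing probability and otherwise only involve vertices of height $0$. If one prefers an argument that does not invoke the known asymptotics of $h(X_n)$, one can alternatively compute the first two moments of $h(v_{n,i})$ directly from the classical representation of the depth of vertex $j$ as a sum of independent Bernoulli variables of parameters $1/k$, $2 \le k \le j$: this gives $\E[h(v_{n,i})] \sim \ln n$ and $\E[h(v_{n,i})^2] \sim (\ln n)^2$, hence convergence of $h(v_{n,i})/\ln n$ to $1$ in $L^2$, which is stronger than required.
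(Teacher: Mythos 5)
Your argument is correct. It is the same idea as the paper's at heart — reduce the height of $v_{n,i}$ to the height of an (essentially) uniform vertex and show that the conditioning that excludes edges adjacent to the root is asymptotically immaterial — but the bookkeeping differs. The paper works with the \emph{farther} endpoint $v_{n,i}'$, observes that it is uniform on the vertices of height at least two, argues total variation closeness to a uniform vertex of $[n]$ using the fact that $\deg_{\tn}(1)/\ln n \to 1$, and then passes from labels to heights via the relation \eqref{eq:hauteur_d_un_site_en_log}. You instead write $v_{n,i}=u_J$ directly (the parent of a uniform $J\in\{2,\dots,n\}$ conditioned on $u_J\neq 1$), discharge the conditioning by the elementary inclusion $\{h(u_J)>x\}\subseteq\{u_J\neq 1\}$ together with $\P(u_J=1)\to 0$, and compare $h(J)$ with $h(X_n)$ directly, invoking $h(X_n)/\ln n\to 1$ (already used in the proof of Theorem~\ref{thm:probabilite_site_uniforme_et_racine_dans_le_meme_arbre_ignifuge}) without any detour through labels. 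This is a small simplification: you avoid both \eqref{eq:hauteur_d_un_site_en_log} and the root-degree asymptotics, at the cost of nothing. Your alternative second-moment argument via the representation of $h(j)$ as a sum of independent Bernoulli variables is also sound and gives the stronger $L^2$ convergence.
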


\begin{proof}
We may replace $v_{n,1}$ by $v_{n,1}'$, which is uniformly distributed on the set of vertices of $\tn$ with height at least two. It is known that the degree of the root of $\tn$ rescaled by $\ln n$ converges to $1$ in probability so, as $n \to \infty$, $v_{n,1}'$ is close (in the sense of total variation) to a uniform random vertex on $[n]$. It follows that
\begin{equation*}
\frac{1}{\ln n} \ln v_{n,1} \cv 1
\quad\text{in probability.}
\end{equation*}
Finally, in a random recursive tree, the label $\ell$ of a vertex and its height $h(\ell)$ are related by
\begin{equation}\label{eq:hauteur_d_un_site_en_log}
\frac{1}{\ln \ell} h(\ell) \quad\mathop{\longrightarrow}^{}_{\ell \to \infty}\quad 1 \quad\text{in probability,}
\end{equation}
which concludes the proof.
\end{proof}

\begin{lem}\label{lem:k_sites_uniformes_se_separent_tot}
For every $k \in \N$ fixed, $\rho_{n,k} = O(1)$ in probability as $n \to \infty$.
\end{lem}

\begin{proof}
It suffices to consider $k = 2$; moreover, we may approximate $v_{n, 1}$ and $v_{n, 2}$ by a two independent uniform random vertices in $[n]$, say, $u_n$ and $v_n$. We have $h(u_n \wedge v_n) \ge 1$ if and only if $u_n$ and $v_n$ belong to the same tree-component in the forest, say, $(T_n^1, \dots, T_n^\kappa)$ obtained by removing the root and all its adjacent edges from $\tn$. We already noticed that this vector forms a discrete stick-breaking process on $[n-1]$. Let $\Upsilon_1 = 0$ and $\Upsilon_n \coloneqq \sum_{j=1}^\kappa |T_n^j|^2$ for $n \ge 1$; then, by decomposing according to the value of $|T_n^1|$, since the latter is uniformly distributed on $[n-1]$, we obtain
\begin{equation*}
\E[\Upsilon_n]
= \sum_{\ell=1}^{n-1} \frac{1}{n-1} (\ell^2 + \E[\Upsilon_{n-\ell}])
= (n-1) + \E[\Upsilon_{n-1}]
= \sum_{\ell=1}^n (\ell-1)
= \frac{n (n-1)}{2}.
\end{equation*}
Finally,
\begin{equation*}
\P(h(u_n \wedge v_n) \ge 1) = \frac{\E[\Upsilon_n]}{n^2} = \frac{n-1}{2n} \cv \frac{1}{2}.
\end{equation*}
We can iterate with the same reasoning: $h(u_n \wedge v_n) \ge \ell$ if and only if $u_n$ and $v_n$ belong to the same tree-component in the forest obtained by removing all the vertices at distance at most $\ell$ from the root and their adjacent edges. Then as previously,
\begin{equation*}
\P(h(u_n \wedge v_n) \ge \ell+1 \mid h(u_n \wedge v_n) \ge \ell) \cv \frac{1}{2}.
\end{equation*}
Therefore $h(u_n \wedge v_n)$ converges weakly to the geometric distribution with parameter $1/2$. It follows that for every $k \in \N$ fixed, the sequence $(\rho_{n,k})_{n \in \N}$ is bounded in probability.
\end{proof}

\begin{lem}\label{lem:taille_sous_arbre_engendre_par_k}
For each $v \in [n]$, denote by $\tn(v)$ the subtree of $\tn$ that stems from $v$. Then for any sequence of integers $1 \ll v_n \ll n$, we have
\begin{equation*}
\frac{v_n}{n} |\tn(v_n)| \cvloi \e_1,
\end{equation*}
where $\e_1$ is an exponential random variable with rate $1$.
\end{lem}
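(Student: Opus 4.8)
The plan is to pin down the exact law of $|\tn(v_n)|$ through a P\'olya urn and then run a direct local limit estimate. Write $v=v_n$; for $j\ge v$ let $\t_j$ denote the recursive tree once it has grown to $j$ vertices, and set $S_j\coloneqq|\t_j(v)|$, the size of the subtree of $\t_j$ stemming from $v$, so that $S_v=1$ and $S_n=|\tn(v)|$. When vertex $j+1$ is added, its parent is uniform among the $j$ vertices already present, hence $j+1$ falls in the subtree of $v$ with probability $S_j/j$; thus $(S_j)_{j\ge v}$ is a P\'olya-type urn and $(S_j/j)_{j\ge v}$ is a bounded martingale, whence $\E[S_n]=n\,\E[S_v]/v=n/v$. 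By exchangeability of P\'olya urns --- equivalently, by solving the forward recursion $\P(S_j=k)=\tfrac{k-1}{j-1}\P(S_{j-1}=k-1)+\tfrac{j-1-k}{j-1}\P(S_{j-1}=k)$ with $\P(S_v=1)=1$ --- one obtains, for $1\le k\le n-v+1$, the P\'olya--Eggenberger formula, which after telescoping reads
\[
\P(S_n=k)=\frac{v-1}{n-1}\prod_{i=0}^{k-2}\Bigl(1-\frac{v-2}{n-2-i}\Bigr).
\]

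I would first discard the large values of $S_n$: Markov's inequality and $\E[S_n]=n/v$ give $\P(S_n\ge An/v)\le 1/A$ for every $A>0$, uniformly in $n$. So it suffices to estimate $\P(S_n=k)$ for $k$ in a window $[xn/v,An/v]$ with $0<x<A$ fixed. On such a window $k=o(n)$ and, since $1\ll v\ll n$, we have $\max_{0\le i\le k-2}\tfrac{v-2}{n-2-i}\le\tfrac{v}{\,n-An/v\,}\to0$. Taking logarithms in the displayed identity and using $\ln(1-y)=-y+O(y^2)$, the linear part contributes $\sum_{i=0}^{k-2}\tfrac{v-2}{n-2-i}=\tfrac{kv}{n}(1+o(1))$, whereas the quadratic and higher-order terms are $O\bigl(k(v/n)^2\bigr)=O(Av/n)=o(1)$, all uniformly over the window; together with $\tfrac{v-1}{n-1}=\tfrac vn(1+o(1))$ this yields the uniform local estimate $\P(S_n=k)=\tfrac vn\ex^{-kv/n}(1+o(1))$.

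Summing over the window and recognising a Riemann sum of mesh $v/n\to0$,
\[
\P\Bigl(x<\tfrac vn S_n\le A\Bigr)=\sum_{xn/v<k\le An/v}\P(S_n=k)=(1+o(1))\,\frac vn\sum_{xn/v<k\le An/v}\ex^{-kv/n}\cv\ex^{-x}-\ex^{-A}.
\]
Combining with the tail bound, $\ex^{-x}-\ex^{-A}\le\liminf_n\P\bigl(\tfrac vn S_n>x\bigr)\le\limsup_n\P\bigl(\tfrac vn S_n>x\bigr)\le\ex^{-x}-\ex^{-A}+1/A$, and letting $A\to\infty$ gives $\P\bigl(\tfrac vn|\tn(v_n)|>x\bigr)\to\ex^{-x}$ for every $x>0$, that is $\tfrac{v_n}{n}|\tn(v_n)|\cvloi\e_1$. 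I expect the one genuinely delicate point to be the uniformity of the local estimate over the window $[xn/v,An/v]$ --- i.e. verifying that the quadratic remainder in the logarithmic expansion is negligible there --- which is precisely where the hypothesis $v_n=o(n)$ (rather than merely $v_n\le n$) is used, and also why it matters to keep $A$ fixed before the final limit $A\to\infty$.
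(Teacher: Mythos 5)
Your proposal is correct and follows essentially the same route as the paper: interpret $|\tn(v)|$ via a P\'olya urn, write down the exact (beta-binomial/P\'olya--Eggenberger) law, and pass to a local limit estimate $\P(S_n=k)\approx\tfrac{v}{n}\ex^{-kv/n}$. The paper obtains the local estimate via Stirling's formula and simply states that the claim follows, whereas you carry out the logarithmic expansion of the product form and explicitly supply the Markov tail bound and Riemann-sum argument to deduce weak convergence from the local one --- a welcome bit of extra detail, but not a different method.
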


\begin{proof}
We interpret the size of $\tn(v)$ in terms of a P\'{o}lya urn. Recall the iterative construction of random recursive trees described in the introduction. At the $v$-th step, we add the vertex $v$ to the current tree; then cut the edge which connects $v$ to its parent to obtain a forest with two components with sets of vertices $\{v\}$ and $[v-1]$ respectively. Next, the vertices $v+1, \dots, n$ are added to this forest independently one after the others, and the parent of each is uniformly chosen in the system. Considering the two connected components, we see that their sizes evolve indeed as an urn with initial configuration of $1$ red ball and $v-1$ black balls and for which at each step, a ball is picked uniformly at random and then put back in the urn, along with one new ball of the same color. Then $|\tn(v)| - 1$ is equal to the number of ``red'' outcomes after $n-v$ trials and this is known to have the beta-binomial distribution with parameters $(n-v, 1, v-1)$, i.e.
\begin{equation*}
\P(|\tn(v)| = \ell+1)
%= \frac{(n-v)!}{\ell! (n-v-\ell)!} \frac{(\ell+1-1)! (n-v-\ell+v-1-1)!}{(n-v+1+v-1-1)!} \frac{(1+v-1-1)!}{(1-1)! (v-1-1)!}
= (v-1) \frac{(n-v)!}{(n-v-\ell)!} \frac{(n-\ell-2)!}{(n-1)!},
\qquad \ell = 0, \dots, n-v.
\end{equation*}
Using Stirling formula, we compute for every $x \in (0, \infty)$ and $1 \ll v_n \ll n$,
\begin{equation*}
\P(|\tn(v_n)| = \lfloor x n / v_n \rfloor) = \frac{v_n}{n} \ex^{-x} (1+o(1)),
\end{equation*}
and the claim follows from this local convergence.
%
%Indeed, fix $x \in (0, \infty)$ and denote by $x_n \coloneqq \lfloor x n / v_n \rfloor$. Then $n, v_n, x_n \to \infty$ and $v_n, x_n = o(n)$ so
%\begin{align*}
%\P(|\tn(v_n)| = x_n)
%&= v_n \frac{(n-v_n)! (n-x_n-1)!}{(n-v_n-x_n)! n!}
%\\
%&= v_n \sqrt{\frac{(n-v_n) (n-x_n-1)}{(n-v_n-x_n) n}} \frac{(n-v_n)^{n-v_n} (n-x_n-1)^{n-x_n-1}}{(n-v_n-x_n)^{n-v_n-x_n} n^n} \ex (1+o(1))
%\\
%&= v_n \ex \bigg(\frac{(n-v_n) (n-x_n-1)}{(n-v_n-x_n) n}\bigg)^{n-v_n-x_n} 
%\frac{(n-v_n)^{x_n} (n-x_n-1)^{v_n-1}}{n^{v_n + x_n}} (1+o(1))
%\\
%&= \frac{v_n}{n} \ex \bigg(1 - \frac{n - v_n x_n - v_n}{(n-v_n-x_n) n}\bigg)^{n-v_n-x_n} 
%\bigg(1 - \frac{v_n}{n}\bigg)^{x_n} \bigg(1 - \frac{x_n+1}{n}\bigg)^{v_n} (1+o(1))
%\\
%&= \frac{v_n}{n} \ex \exp\bigg(- \frac{n - v_n x_n - v_n}{n} + O\bigg(\frac{(n - v_n x_n - v_n)^2}{(n-v_n-x_n) n^2}\bigg) - \frac{v_n x_n}{n} + O\bigg(\frac{v_n^2 x_n}{n^2}\bigg) - \frac{(x_n+1) v_n}{n} + O\bigg(\frac{(x_n+1)^2 v_n}{n^2}\bigg)\bigg)
%\\
%&= \frac{v_n}{n} \exp\bigg(1 - 1 + x + O\bigg(\frac{1}{n}\bigg) - x + O\bigg(\frac{v_n}{n}\bigg) - x + O\bigg(\frac{1}{v_n}\bigg)\bigg)
%\\
%&= \frac{v_n}{n} \ex^{-x} (1+o(1)).
%\end{align*}
\end{proof}

\begin{figure}[!h] 
\begin{center}
\includegraphics[width=\linewidth]{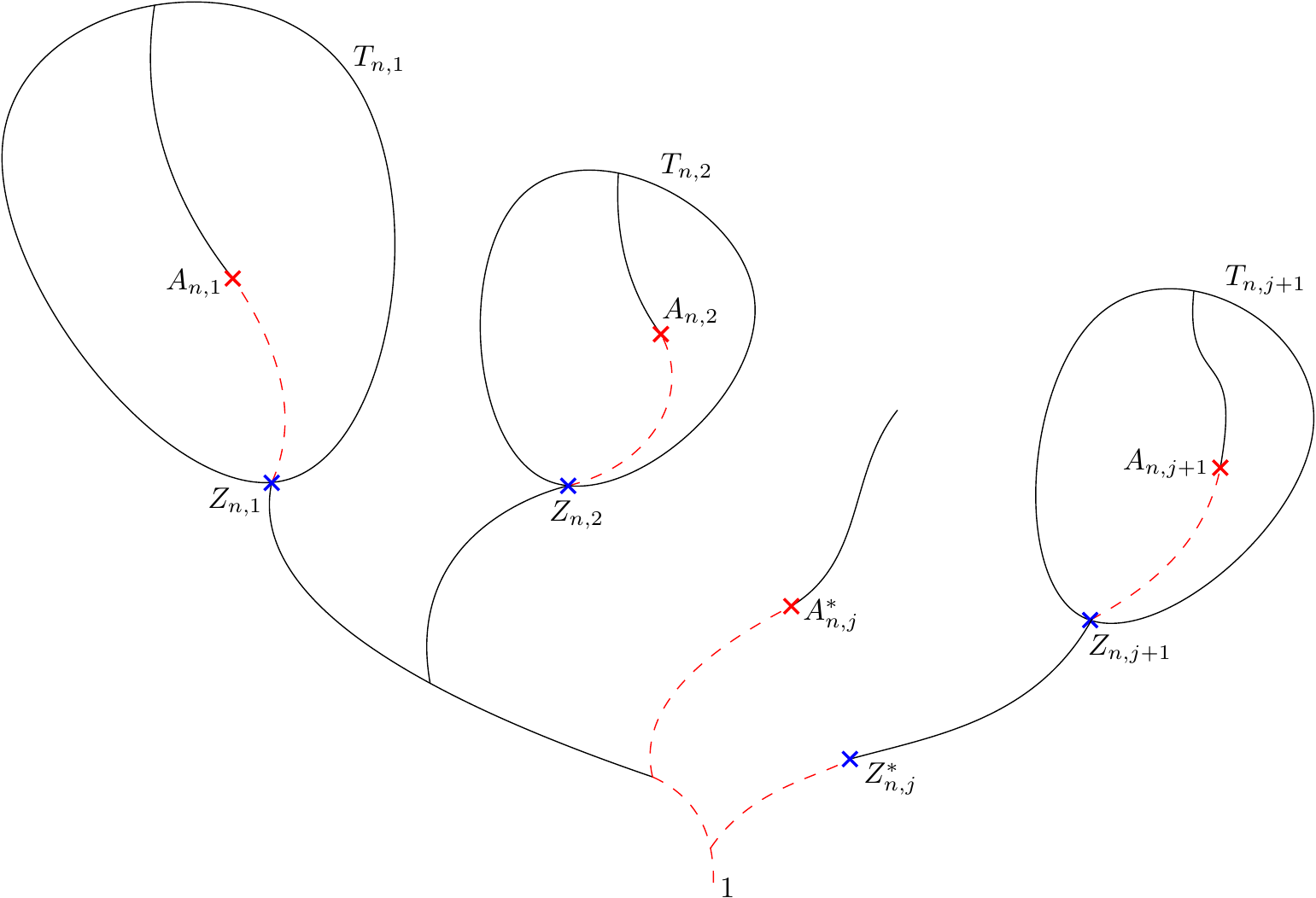}
\caption{Illustration of the proof of Theorem \ref{thm:limite_premiers_arbres_brules_cas_critique}.}
\end{center}
\end{figure}

The proof of Theorem \ref{thm:limite_premiers_arbres_brules_cas_critique} consists of four main steps, which are indicated by ``\textsc{Step 1}'', \ldots, ``\textsc{Step 4}''. We first consider the case of the root: we view the fire dynamics as a dynamical percolation in continuous-time where each fireproof edge is deleted and each burnt component is discarded. Then the results of Bertoin \cite{Bertoin-Sizes_of_the_largest_clusters_for_supercritical_percolation_on_random_recursive_trees} allow us to derive the size of the root-cluster at the instant of the first fire, which gives us the probability that this cluster burns with the first fire. If it does not, then we discard a small cluster (this is proven in step 2) and then continue the percolation on the remaining part until the second fire; again we know the size of the root-cluster at this instant. By induction, we obtain for every $j \ge 1$ the probability that the root burns with the $j$-th fire, and the size of its burnt component.

In a second step, we investigate the size of the first burnt subtree, conditionally given that it does not contain the root. We denote by $A_{n,1}$ the closest extremity to the root of the first edge which is set on fire. Since the root does not burn at this instant, there exists at least one fireproof edge on the path from the root to $A_{n, 1}$; consider all the vertices on this path which are adjacent to such a fireproof edge and let $Z_{n,1}$ be the closest one to $A_{n,1}$. Finally, let $\t_{n, 1}$ be the subtree of $\tn$ that stems from $Z_{n,1}$. Then $b_{n,1}$ is the size of the first burnt subtree of $\t_{n,1}$ and the latter contains $A_{n,1}$ and its root $Z_{n,1}$. We estimate the size of $\t_{n,1}$; further, we know the number of fireproof edges in $\t_{n,1}$ before the first fire and, thanks to the first step (recall that, conditionally given its size, $\t_{n,1}$ is a random recursive tree), we know the size of the root-component which burns with the first fire.

In the third step, we extend the results of the second one to the first two burnt components, conditionally given that none of them contains the root. The reasoning is the same as for the second step; we prove that the paths between the root of $\tn$ and the starting points of the first two fires, respectively $A_{n,1}$ and $A_{n,2}$, become disjoint close to the root so that the dynamics on each are essentially independent: in particular the variables $Z_{n,1}$ and $Z_{n,2}$ (the latter plays the same role as $Z_{n,1}$ for the second fire) become independent at the limit. We conclude by induction that the estimate holds for the sizes of the first $k$ burnt subtrees, conditionally given that the root does not burn with any of the first $k$ fires.

The last step is a simple remark: the fact that the root burns does not affect the previous reasoning. Indeed, if the root burns with, say, the $j$-th fire, then there exists a vertex $Z_{n,j}^\ast$ between the root and the starting point $A_{n,j+1}$ of the $j+1$-first fire where the $j$-th fire stops. Therefore the estimate for the size of the burnt components before the instant where the root burns holds also for the burnt components which come after that the root has burnt.

\begin{proofof}{Theorem \ref{thm:limite_premiers_arbres_brules_cas_critique}}
\textsc{Step 1:} Let us first consider the root-component. It will be more convenient to work in a continuous-time setting. We attach to each edge $e$ of $\tn$ two independent exponential random variables, say, $\e^{(f)}(e)$ and $\e^{(b)}(e)$, with parameter $(1-p_n) / \ln n$ and $p_n / \ln n$ respectively. They should be thought of as the time at which the edge $e$ becomes fireproof or is set on fire respectively; if the edge is already burnt because of the propagation of a prior fire, we do not do anything; likewise, if an edge $e$ is fireproof, we do not set it on fire at the time $\e^{(b)}(e) > \e^{(f)}(e)$. Then the time $\tau(n)$ corresponding to the first fire is given by
\begin{equation*}
\tau(n) = \inf\{\e^{(b)}(e) : \e^{(b)}(e) < \e^{(f)}(e)\}.
\end{equation*}
By the properties of exponential distribution, the variable $\inf_e \e^{(b)}(e)$ is exponentially distributed with parameter $(n-1)p_n / \ln n \to c$ as $n \to \infty$. Denote by $\hat{e}$ the edge of $\tn$ such that $\e^{(b)}(\hat{e}) = \inf_e \e^{(b)}(e)$. Then $\e^{(f)}(\hat{e})$ is exponentially distributed with parameter $(1-p_n) / \ln n \ll 1$ and so $\e^{(b)}(\hat{e}) < \e^{(f)}(\hat{e})$ with high probability. We conclude that $\tau(n)=\inf_e \e^{(b)}(e)$ with high probability and the latter converges in distribution to $\gamma_1$. It then follows from Corollary 2 of Bertoin \cite{Bertoin-Sizes_of_the_largest_clusters_for_supercritical_percolation_on_random_recursive_trees} that the size of the root-component at the instant $\tau(n)$, rescaled by a factor $n^{-1}$, converges to $\ex^{-\gamma_1}$ as $n \to \infty$. We conclude that the probability that the root of $\tn$ burns with the first fire converges to $\E[\ex^{-\gamma_1}]$ and, conditional on this event, the size of the corresponding burnt component rescaled by a factor $n^{-1}$ converges to $\ex^{-\gamma_1}$ in distribution.

If the root does not burn with the first fire, then we shall prove in the next step that the size $b_{n,1}$ of the first burnt component is negligible compared to $n$ with high probability. The previous reasoning then shows that the time of the second fire converges in distribution to $\gamma_2$, the probability that the root of $\tn$ burns at the second fire converges to $\E[(1 - \ex^{-\gamma_1}) \ex^{-\gamma_2}]$ and, on this event, the size of the corresponding burnt component rescaled by a factor $n^{-1}$ converges to $\ex^{-\gamma_2}$ in distribution. Again, if the root does not burn with the second fire, then the second burnt component is negligible compared to $n$ with high probability and the general claim follows by induction.

\textsc{Step 2:} For the rest of this proof, we condition on the event that the root of $\tn$ burns with the $j$-th fire with $j \ge 1$ fixed. We first prove the convergence of the logarithm of the sizes of the first $j-1$ burnt subtrees. Observe that the $j-1$ first edges which are set on fire are distributed as a simple random sample of $j-1$ edges from the set $E_n$ of edges of $\tn$ not adjacent to the root. Lemma \ref{lem:hauteurs_sites_internes_uniformes} then entails that
\begin{equation*}
\frac{1}{\ln n} (h(A_{n,1}), \dots, h(A_{n,j-1})) \cv (1, \dots, 1)
\quad\text{in probability.}
\end{equation*}
Consider first the first burnt subtree. The number $\theta_{n,1}$ of fireproof edges when the first edge is set on fire follows the geometric distribution with parameter $p_n \sim c \ln n / n$, truncated at $n-1$, so
\begin{equation}\label{eq:convergence_temps_premier_feu}
\frac{\ln n}{n} \theta_{n,1} \cvloi \gamma_1.
\end{equation}
Conditioning the root not to burn with the first fire amounts to conditioning the path from $1$ to $A_{n,1}$ to contain at least one of the $\theta_{n,1}$ first fireproof edges. Since, conditionally given $\theta_{n,1}$, these edges are distributed as a simple random sample from the $n-2$ edges of $\tn$ different from the first one which is set on fire, then for every $x \in (0, 1)$, the probability that $d(A_{n,1}, Z_{n,1})$ is smaller than $x \ln n$, conditionally given that it is smaller than $h(A_{n,1})$ is given by
\begin{equation*}
\E\Bigg[\frac{1 - (1 - \frac{\lfloor x \ln n \rfloor}{n-2}) \cdots (1 - \frac{\lfloor x \ln n \rfloor}{n-\theta_{n,1}-1})}
{1 - (1 - \frac{h(A_{n,1})}{n-2}) \cdots (1 - \frac{h(A_{n,1})}{n-\theta_{n,1}-1})}\Bigg]
\sim \E\Bigg[\frac{1 - (1 - \frac{x \ln n}{n})^{\theta_{n,1}}}{1 - (1 - \frac{\ln n}{n})^{\theta_{n,1}}}\Bigg]
\mathop{\longrightarrow}^{}_{n \to \infty} \E\bigg[\frac{1 - \exp(- x \gamma_1)}{1 - \exp(- \gamma_1)}\bigg].
\end{equation*}
We conclude that
\begin{equation*}
\bigg(\frac{\ln n}{n} \theta_{n,1}, \frac{1}{\ln n} d(A_{n,1}, Z_{n,1})\bigg) \cvloi (\gamma_1, Z_1).
\end{equation*}
Appealing to \eqref{eq:hauteur_d_un_site_en_log}, we further have
\begin{equation*}
\frac{1}{\ln n} \ln Z_{n,1} \cvloi 1 - Z_1,
\end{equation*}
jointly with the convergence \eqref{eq:convergence_temps_premier_feu}. Note that with the notation of Lemma \ref{lem:taille_sous_arbre_engendre_par_k}, we have $\t_{n,1} = \tn(Z_{n,1})$. Since $1 \ll Z_{n,1} \ll n$ in probability, we obtain
\begin{equation*}
\frac{1}{\ln n} \ln |\t_{n,1}| \cvloi Z_1,
\end{equation*}
again jointly with \eqref{eq:convergence_temps_premier_feu}. Consider finally the fire dynamics on $\t_{n,1}$ and denote by $N_{n,1}$ the number of edges which are fireproof in this tree before the first fire. Note that conditionally given $N_{n,1}$, these edges are distributed as a simple random sample of $N_{n,1}$ edges from the complement in $\t_{n,1}$ of the path from its root $Z_{n,1}$ to $A_{n,1}$ and that the fire burns this path. Conditionally given $\theta_{n,1}$, $d(A_{n,1}, Z_{n,1})$ and $|\t_{n,1}|$, the variable $N_{n,1}$ has a hypergeometric distribution: it is the number of edges picked amongst the $|\t_{n,1}| - 1 - d(A_{n,1}, Z_{n,1}) \sim |\t_{n,1}|$ ``admissible'' edges from the $n-1$ edges of $\tn$ after $\theta_{n,1}$ draws without replacement. Since $\theta_{n,1} = o(n)$ in probability, conditionally given $\theta_{n,1}$, $d(A_{n,1}, Z_{n,1})$ and $|\t_{n,1}|$, the variable $N_{n,1}$ is close (in total variation) to a binomial variable with parameter $\theta_{n,1}$ and $n^{-1} |\t_{n,1}|$. It is easy to check that a binomial random variable with parameters, say, $n$ and $p(n)$, rescaled by a factor $(n p(n))^{-1}$, converges in probability to $1$; it follows from the previous convergences that \begin{equation*}
\frac{\ln |\t_{n,1}|}{|\t_{n,1}|} N_{n,1}
= \frac{\ln n}{n} \theta_{n,1} \frac{\ln |\t_{n,1}|}{\ln n} \frac{n}{\theta_{n,1} |\t_{n,1}|} N_{n,1}
\cvloi \gamma_1 Z_1.
\end{equation*}
We see that in $\t_{n,1}$, we fireproof $N_{n,1} \approx \gamma_1 Z_1 |\t_{n,1}| / \ln |\t_{n,1}|$ edges before setting the root on fire. From the first step of the proof, the size $b_{n,1}$ of the first burnt component (which, by construction, contains the root of $\t_{n,1}$) is comparable to $|\t_{n,1}|$; in particular,
\begin{equation*}
\frac{1}{\ln n} \ln b_{n, 1} \cvloi Z_1.
\end{equation*}

\textsc{Step 3:} Consider next the first two fires and, again, condition the root not to be burnt after the second fire; in particular, there exists at least one fireproof edge on the path from $1$ to $A_{n,1}$ and on that from $1$ to $A_{n,2}$. Thanks to Lemma \ref{lem:k_sites_uniformes_se_separent_tot}, we have $h(A_{n,1} \wedge A_{n,2}) = o(\ln n)$ in probability. Then with high probability, $Z_{n,1}$ and $Z_{n,2}$ are located outside the ball centered at $1$ and of radius $h(A_{n,1} \wedge A_{n,2})$ where the two paths from $1$ to $A_{n,1}$ and to $A_{n,2}$ are disjoint so where the location of the fireproof edges are independent (conditionally given $\theta_{n,1}$ and $\theta_{n,2}$). With the same reasoning as for the first fire, we obtain
\begin{equation*}
\bigg(\frac{\ln n}{n} (\theta_{n,1}, \theta_{n,2}), \frac{1}{\ln n} \big( d(A_{n,1}, Z_{n,1}), d(A_{n,2}, Z_{n,2}) \big)\bigg)
\cvloi
\big((\gamma_1, \gamma_2), (Z_1, Z_2)\big).
\end{equation*}
Then
\begin{equation*}
\frac{1}{\ln n} (\ln Z_{n,1}, \ln Z_{n,2}) \cvloi (1 - Z_1, 1 - Z_2),
\end{equation*}
and
\begin{equation*}
\frac{1}{\ln n} (\ln |\t_{n,1}|, \ln |\t_{n,1}|) \cvloi (Z_1, Z_2).
\end{equation*}
Finally,
\begin{equation*}
\frac{1}{\ln n} (\ln b_{n, 1}, \ln b_{n, 2}) \cvloi (Z_1, Z_2).
\end{equation*}
We conclude by induction that
\begin{equation*}
\frac{1}{\ln n} (\ln b_{n, 1}, \dots, \ln b_{n, j-1}) \cvloi (Z_1, \dots, Z_{j-1}).
\end{equation*}

\textsc{Step 4:} Consider next the $j+1$-st fire. The number of flammable edges after the $j$-th fire is given by $q_{n,j} \coloneqq n - (1 + \theta_{n,j} + (b_{n,1} - 1) + \dots + (b_{n,j} - 1))$ which, rescaled by a factor $n^{-1}$, converges in distribution towards $1 - \ex^{-\gamma_j}$. As previously, $\theta_{n,j+1} - \theta_{n,j}$ is then distributed as a geometric random variable with parameter $p_n \sim c \ln n / n$ and truncated at $q_{n,j} \gg n / \ln n$. It follows that
\begin{equation*}
\frac{\ln n}{n} (\theta_{n,1}, \dots, \theta_{n,j+1}) \cvloi (\gamma_1, \dots, \gamma_{j+1}).
\end{equation*}
Moreover since the root has burnt, there is at least one fireproof edge on the path from $1$ to $A_{n, j+1}$, the starting point of the $j+1$-st fire. All the previous work then applies and we obtain the convergence
\begin{equation*}
\frac{1}{\ln n} (\ln b_{n, 1}, \dots, \ln b_{n, j-1}, \ln b_{n, j+1}) \cvloi (Z_1, \dots, Z_{j-1}, Z_{j+1}).
\end{equation*}
The general claim follows by a last induction.
\end{proofof}

%%%%%%%%%%%%%%%%%%%%%%%%%%%%%%%%%%%%%%%%%%%%%%%%%%%%%%
%%%%%%%%%%%%%%%%%%%%%%%%%%% REMERCIEMENTS %%%%%%%%%%%%%%%%%
%%%%%%%%%%%%%%%%%%%%%%%%%%%%%%%%%%%%%%%%%%%%%%%%%%%%%%

\vspace{\baselineskip}
\textbf{Acknowledgement. }
I thank Jean Bertoin for reading the manuscript and for stimulating discussions, as well as two anonymous referees for their comments which helped improving this paper. This work is supported by the Swiss National Science Foundation 200021\_144325/1.

%%%%%%%%%%%%%%%%%%%%%%%%%%%%%%%%%%%%%%%%%%%%%%%%%%%%%
%%%%%%%%%%%%%%%%%%%%%%%%%% BIBLIOGRAPHIE %%%%%%%%%%%%%%%%%
%%%%%%%%%%%%%%%%%%%%%%%%%%%%%%%%%%%%%%%%%%%%%%%%%%%%%

{\small
\bibliographystyle{acm}

}

\end{document}